\numberwithin{equation}{section}
\newtheorem{Lemma}[equation]{Lemma}
\newtheorem{Theorem}[equation]{Theorem}
\newtheorem{Corollary}[equation]{Corollary}
\theoremstyle{definition}  
\newtheorem{Definition}[equation]{Definition}
\newtheorem*{thmA}{Theorem A}
\newtheorem*{thmB}{Theorem B}
\theoremstyle{remark}
\newtheorem{qqq}{Question}
\newcommand\Comment[2][\relax]{\space\par\medskip\noindent%
   \fbox{\begin{minipage}{\textwidth}\textbf{Comment\ifx\relax#1\else---#1\fi}\newline%
        #2\end{minipage}}\medskip
}
\def\b1{\text{\boldmath$1$}}
\newcommand{\cg}{\textup{cg}}
\def\phi{{\varphi}}
\newcommand{\TT}{{\mathcal T}}
\newcommand{\DD}{{\mathcal D}}
\newcommand{\EEE}{{\mathcal E}}
\def\b{\mathfrak{b}}
\def\k{\Bbbk}
\def\TT{\mathcal{T}}
  \gdef\set#1{\mathinner{\lbrace\,{\mathcode`\|"8000%
  \let|\midvert #1}\,\rbrace}}
\def\midvert{\egroup\mid\bgroup}
\colorlet{darkgreen}{green!50!black}
\tikzset{dots/.style={very thick,loosely dotted},
         greendot/.style={fill,circle,color=darkgreen,inner sep=1.5pt,outer sep=0}
}
\def\greendot(#1,#2){\node[greendot] at(#1,#2){}}
\newenvironment{braid}{
  \begin{tikzpicture}[baseline=6mm,blue,line width=1pt, scale=0.4,
                      draw/.append style={rounded corners},
                      every node/.append style={font=\fontsize{5}{5}\selectfont}]%
  }{\end{tikzpicture}
}
\def\Grid(#1,#2){
  \draw[very thin,gray,step=2mm] (0,0)grid(#1,#2);
  \draw[very thin,darkgreen,step=10mm] (0,0)grid(#1,#2);
}
\newcommand\Tableau[2][\relax]{
  \begin{tikzpicture}[scale=0.5,draw/.append style={thick,black}]
    \ifx\relax#1\relax%
    \else 
      \foreach\box in {#1} { \filldraw[blue!30]\box+(-.5,-.5)rectangle++(.5,.5); }
    \fi
    \newcount\row\newcount\col
    \row=0
    \foreach \Row in {#2} {
       \col=1
       \foreach\k in \Row {
          \draw(\the\col,\the\row)+(-.5,-.5)rectangle++(.5,.5);
          \draw(\the\col,\the\row)node{\k};
          \global\advance\col by 1
       }
       \global\advance\row by -1
    }
  \end{tikzpicture}
}
\newcommand\YoungDiagram[2][\relax]{
  \begin{tikzpicture}[scale=0.5,draw/.append style={thick,black}]
    \ifx\relax#1\relax%
    \else 
    \foreach\box in {#1} {
      \filldraw[blue!30]\box rectangle ++(1,1);
    }
    \fi
    \newcount\row
    \row=0
    \foreach \col in {#2} {
       \draw(1,\the\row)grid ++(\col,1);
       \global\advance\row by -1
    }
  \end{tikzpicture}
}
\begin{document}


\title[Cooperative half-guards in art galleries]{{\bf Cooperative half-guards in art galleries}}

\author{\sc Daniel Florentino}
\address{Washington \& Jefferson College\\ Washington\\ PA~15301, USA}
\email{florentinodc@washjeff.edu}

\author{\sc Ethan Moy}
\address{Washington \& Jefferson College\\ Washington\\ PA~15301, USA}
\email{moye@washjeff.edu}

\author{\sc Robert Muth}
\address{Department of Mathematics\\ Washington \& Jefferson College\\ Washington\\ PA~15301, USA}
\email{rmuth@washjeff.edu}



\begin{abstract}
In any simple polygonal art gallery \(P\) with \(n\) walls, we show that it is possible to place \(\lfloor n/2 \rfloor -1\) guards whose range of vision is \(180^\circ\) in such a way that every interior point of the gallery can be seen by one of them, and such that the mutual visibility graph formed by the guards is connected. This upper bound is tight, in that there exist galleries which require this number of guards, and equals the known result for guards with full \(360^\circ\) range of vision.
We also show that for orthogonal art galleries, this result may be improved to \(\lfloor n/2\rfloor -2\) guards with \(180^\circ\) range of vision.
\end{abstract}

\maketitle

\section{Introduction}

The {\em art gallery problem} encompasses a wide range of problems in geometry and combinatorics. In this problem, one considers a simple closed polygon \(P\) as the floor plan of an art gallery. A `guard' within \(P\) is a stationary point with a fixed range of view, and we say that a guard \(g\) sees a point \(p \in P\) if the line segment \(\overline{gp}\) lies within \(P\) and within \(g\)'s range of view. We will restrict our attention in this paper to full-guards (which see \(360^\circ\) around them), and half-guards (which have a visibility range of \(180^\circ\)).  A set of guards \(G\) is said to monitor \(P\) if every point in \(P\) is seen by some guard in \(G\). As reported by Honsberger \cite{Hons}, Klee posed the following question in 1973:

\begin{qqq}\label{Q1} How many full-guards are sufficient to monitor any art gallery with \(n\) walls?
\end{qqq}

This was answered by Chv\'atal in 1975 \cite{chvatal}, who showed that \(\lfloor n/3 \rfloor\) full-guards are always sufficient and sometimes necessary to monitor an art gallery \(P\) with \(n\) walls. In 1978, Fisk \cite{fisk} provided an elegant proof of this same fact based on 3-colorings of a triangulation graph for \(P\). In 2000, Urrutia \cite{Urrutia} posed the related question: 

\begin{qqq}\label{Q2} How many half-guards are sufficient to monitor any art gallery with \(n\) walls?
\end{qqq}

While there certainly exist art galleries which require more half-guards to monitor than full-guards (as in Figs.\,1 and 2), the somewhat surprising answer to Question~\ref{Q2} is that the bound given by Chv\'atal applies to half-guards as well; in 2000, T\'oth \cite{Toth} showed that \(\lfloor n/3 \rfloor\) half-guards are always sufficient to monitor an art gallery \(P\) with \(n\) walls.
\begin{align*}
\begin{array}{ccccc}
\begin{tikzpicture}[scale=0.8]
\draw[very thick, black, line cap=round, fill=lightgray!10] (0,0)--(1.5,1.5)--(2,3)--(2.5,1.5)--(4,0)--(2,0.5)--(0,0);
%
\pgfmathsetmacro{\ex}{2}
\pgfmathsetmacro{\ey}{1.15}
\draw[thick, fill=red]  (\ex, \ey) circle (0.15);
\fill[fill=red, opacity = 0.4]  (\ex, \ey) circle (0.5);
%
\end{tikzpicture}
&
&
\begin{tikzpicture}[scale=0.8]
\draw[very thick, black, line cap=round, fill=lightgray!10] (0,0)--(1.5,1.5)--(2,3)--(2.5,1.5)--(4,0)--(2,0.5)--(0,0);
%
\pgfmathsetmacro{\ex}{2.5}
\pgfmathsetmacro{\ey}{1.5}
\pgfmathsetmacro{\rotter}{109}
\fill[fill = red, opacity = 0.4] (\ex,\ey) ++(\rotter:.5) arc (\rotter:\rotter + 180:.5);
\draw[thick, fill=red]  (\ex, \ey) circle (0.15);
\pgfmathsetmacro{\ex}{1.5}
\pgfmathsetmacro{\ey}{1.5}
\pgfmathsetmacro{\rotter}{0-109}
\fill[fill = red, opacity = 0.4] (\ex,\ey) ++(\rotter:.5) arc (\rotter:\rotter + 180:.5);
\draw[thick, fill=red]  (\ex, \ey) circle (0.15);
\end{tikzpicture}
\\
\scriptstyle{\textup{{ Fig.\,1: One full-guard monitors \(P\)}}} & & \scriptstyle{\textup{{ Fig.\,2: Two half-guards monitor \(P\)}}}
\end{array}
\end{align*}

A modification of the art gallery problem that has garnered attention is the notion of cooperative guards, proposed by Liaw, Huang and Lee \cite{LHL}. The mutual visibility graph of a guard set \(G\) is the graph whose nodes consist of the guards themselves, with an edge connecting two guards if they see each other. Then a guard set is said to be cooperative if the mutual visibility graph is connected; the idea being that cooperative guards can effectively pass messages to each other via a chain of line-of-sight communication. We have the obvious extension of Question~\ref{Q1} to this setting:

\begin{qqq}\label{Q3}
How many cooperative full-guards are sufficient to monitor any art gallery with \(n\) walls?
\end{qqq}

Hern\'andez-Pe\~nalver \cite{HP} settled this question by showing that \(\lfloor n/2 \rfloor -1\) cooperative full-guards are always sufficient and sometimes necessary to monitor an art gallery \(P\) with \(n \geq 4\) walls.

\begin{align*}
\begin{array}{c}
\begin{tikzpicture}[scale=0.8]
\draw[very thick, black, line cap=round, fill=lightgray!10] (0,0)--(-1,1.5)--(1,3)--(1.5,2)--(2.3,3)--(5.7,3)--(5.35,2.4)--(5.9,1.5)--(6.5,2.7)--(7.5,1.8)--(8,3)--(11,1)--(11.5,1.5)--(11.5,2.2)--(10.5,1.7)-- (9,3)--(12,2.7)--(12.5,1)--(13.5,0.5)--(12,0.5)--(11,0)--(10,0)--(9,1.8)--(8,0)--(4.5,0.5)--(5,1.3)--(3.5,2.2)--(4,1.5)--(2.5,0)--(0,1)--(0,0);
%
%
%
%
%
\draw[very thick, cyan, dotted, line cap=round] (-0.35,2)--(2,1);
\draw[very thick, cyan, dotted, line cap=round] (2.9,2.7)--(2,1);
\draw[very thick, cyan, dotted, line cap=round] (2.9,2.7)--(7,0.75);
\draw[very thick, cyan, dotted, line cap=round] (4.5,0.6)--(7,0.75);
\draw[very thick, cyan, dotted, line cap=round] (9,2.33)--(7,0.75);
\draw[very thick, cyan, dotted, line cap=round] (9,2.23)--(11.5,0.25);
\draw[very thick, cyan, dotted, line cap=round] (10.0,0.7)--(11.5,0.25);
\draw[very thick, cyan, dotted, line cap=round] (12,2.7)--(11.5,0.25);
\pgfmathsetmacro{\ex}{-0.35}
\pgfmathsetmacro{\ey}{2}
\pgfmathsetmacro{\rotter}{217}
\fill[fill = red, opacity = 0.4] (\ex,\ey) ++(\rotter:.5) arc (\rotter:\rotter+180:.5);
\draw[thick, fill=red]  (\ex,\ey) circle (0.15);
\pgfmathsetmacro{\ex}{2}
\pgfmathsetmacro{\ey}{1}
\pgfmathsetmacro{\rotter}{-10}
\fill[fill = red, opacity = 0.4] (\ex,\ey) ++(\rotter:.5) arc (\rotter:\rotter+180:.5);
\draw[thick, fill=red]  (\ex,\ey) circle (0.15);
\pgfmathsetmacro{\ex}{2.9}
\pgfmathsetmacro{\ey}{2.7}
\pgfmathsetmacro{\rotter}{200}
\fill[fill = red, opacity = 0.4] (\ex,\ey) ++(\rotter:.5) arc (\rotter:\rotter+180:.5);
\draw[thick, fill=red]  (\ex,\ey) circle (0.15);
\pgfmathsetmacro{\ex}{4.5}
\pgfmathsetmacro{\ey}{0.5}
\pgfmathsetmacro{\rotter}{-40}
\fill[fill = red, opacity = 0.4] (\ex,\ey) ++(\rotter:.5) arc (\rotter:\rotter+180:.5);
\draw[thick, fill=red]  (\ex,\ey) circle (0.15);
\pgfmathsetmacro{\ex}{7}
\pgfmathsetmacro{\ey}{0.75}
\pgfmathsetmacro{\rotter}{20}
\fill[fill = red, opacity = 0.4] (\ex,\ey) ++(\rotter:.5) arc (\rotter:\rotter+180:.5);
\draw[thick, fill=red]  (\ex,\ey) circle (0.15);
\pgfmathsetmacro{\ex}{9}
\pgfmathsetmacro{\ey}{2.33}
\pgfmathsetmacro{\rotter}{146}
\fill[fill = red, opacity = 0.4] (\ex,\ey) ++(\rotter:.5) arc (\rotter:\rotter+180:.5);
\draw[thick, fill=red]  (\ex,\ey) circle (0.15);
\pgfmathsetmacro{\ex}{11.5}
\pgfmathsetmacro{\ey}{0.25}
\pgfmathsetmacro{\rotter}{50}
\fill[fill = red, opacity = 0.4] (\ex,\ey) ++(\rotter:.5) arc (\rotter:\rotter+180:.5);
\draw[thick, fill=red]  (\ex,\ey) circle (0.15);
\pgfmathsetmacro{\ex}{10.0}
\pgfmathsetmacro{\ey}{0.7}
\pgfmathsetmacro{\rotter}{-90}
\fill[fill = red, opacity = 0.4] (\ex,\ey) ++(\rotter:.5) arc (\rotter:\rotter+180:.5);
\draw[thick, fill=red]  (\ex,\ey) circle (0.15);
\pgfmathsetmacro{\ex}{12}
\pgfmathsetmacro{\ey}{2.7}
\pgfmathsetmacro{\rotter}{150}
\fill[fill = red, opacity = 0.4] (\ex,\ey) ++(\rotter:.5) arc (\rotter:\rotter+180:.5);
\draw[thick, fill=red]  (\ex,\ey) circle (0.15);
%
%
%
%
\end{tikzpicture}
\\
\scriptstyle{\textup{{ Fig.\,3: A cooperative half-guard set, with mutual visibility graph indicated with dashed lines}}}
\end{array}
\end{align*}

To the authors' knowledge, no work has yet been done in studying the notion of cooperative guards with less-than-full viewing range, though this topic was raised in \cite{Zylinski}. For half-guards, there is a wider range of notions of cooperativeness that one could consider, as now one guard may see another but not vice versa. In this paper we still utilize the rather strong criteria that a set of half-guards is cooperative only if the mutual visibility graph is connected, as we make the assumption that two guards can only effectively communicate if they see one another simultaneously. An example of a cooperative half-guard set for an art gallery \(P\) is shown in Fig.\,3 above. At the intersection of Questions~\ref{Q2} and~\ref{Q3} is:

\begin{qqq}\label{Q4}
How many cooperative half-guards are sufficient to monitor any art gallery with \(n\) walls?
\end{qqq}

We propose to answer Question~\ref{Q4} in this paper. As with T\'oth's result in the non-cooperative setting, it turns out that the bound proven in \cite{HP} for cooperative full-guards remains in place for cooperative half-guards. In Theorem~\ref{MainThm} and Corollary~\ref{MainCor}, we show:

\begin{thmA}
{\em 
For \(n \geq 4\), \(\lfloor n/2 \rfloor -1\) cooperative half-guards are always sufficient and sometimes necessary to monitor an art gallery with \(n\) walls. 
}
\end{thmA}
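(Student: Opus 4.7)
For the lower bound, I would observe that cooperative half-guarding is at least as hard as cooperative full-guarding: any set of $k$ cooperative half-guards remains both monitoring and cooperative when each guard is replaced by a full-guard at the same point (since full-guards see strictly more, both the visibility of points in $P$ and the mutual visibility relations can only grow). Hence any gallery requiring $\lfloor n/2 \rfloor - 1$ cooperative full-guards, as exhibited in \cite{HP}, automatically requires at least $\lfloor n/2 \rfloor - 1$ cooperative half-guards.

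For the upper bound, the plan is to decompose $P$ via non-crossing interior diagonals into exactly $\lfloor n/2 \rfloor - 1$ pieces, each of which is a triangle, a quadrilateral, or (only when $n$ is odd, at most one) a pentagon. An Euler-characteristic count pins down the piece types: with $k = \lfloor n/2 \rfloor - 1$ pieces and $k-1$ diagonals, the total side-count $n + 2(k-1)$ equals $4k$ when $n$ is even (forcing all quadrilaterals) and $4k+1$ when $n$ is odd (forcing $k-1$ quadrilaterals and one pentagon). Existence of such a decomposition is established by starting from any triangulation of $P$ and merging adjacent pairs of triangles across diagonals until the desired piece-type profile is reached. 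Each piece is then monitored by a single half-guard: at any vertex of a triangle or convex quadrilateral; at the vertex opposite the unique reflex of a $1$-reflex quadrilateral (since the piece is the union of the two triangles sharing the diagonal through the reflex vertex, and the opposite vertex sees both while lying on a supporting line that places the entire piece in a closed half-plane); and for the lone pentagon, at a vertex furnished by T\'oth's theorem \cite{Toth}, which guarantees that one half-guard suffices for any pentagon.

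The main obstacle is enforcing cooperativeness. The strategy is to arrange that for each interior diagonal $d = uv$ of the decomposition, shared by pieces $Q$ and $Q'$, the guards of $Q$ and $Q'$ are placed at the two endpoints of $d$ (either coinciding at one endpoint, or one at each). When this holds, each of $u, v$ is a vertex of both $Q$ and $Q'$, hence lies in the visibility half-plane of the guard situated at the opposite endpoint; since $\overline{uv} = d \subseteq P$, the two guards are then mutually visible. This embeds the dual tree of the decomposition as a subgraph of the mutual visibility graph, which is therefore connected. The delicate step is jointly selecting the decomposition and the vertex assignment $Q \mapsto v_Q$ so that $v_Q$ is simultaneously an endpoint of \emph{every} diagonal incident to $Q$. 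I would establish this by induction on $n$: locate a \emph{quadrilateral ear} --- four consecutive polygon vertices $v_{i-1}, v_i, v_{i+1}, v_{i+2}$ whose bounding diagonal $v_{i-1}v_{i+2}$ lies in $P$ --- apply the induction hypothesis to the $(n-2)$-vertex remainder $P'$ obtained by removing $v_i, v_{i+1}$, and position the new guard at whichever endpoint of $v_{i-1}v_{i+2}$ hosts a guard in the configuration of $P'$. The crux, and the step I expect to be hardest, is showing that such a quadrilateral ear always exists and can be chosen compatibly with a configuration on $P'$ coming from the induction hypothesis; this will require a variant of the two-ears theorem together with a flexibility argument for the recursive vertex assignment, since not every polygon admits a quadrilateral ear at every location.
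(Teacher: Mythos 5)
Your lower bound is exactly the paper's argument (Corollary~\ref{MainCor} combined with Theorem~\ref{HPthm}) and is fine. The upper bound, however, has two genuine gaps, both stemming from the same over-optimistic assumption: that all guards can be placed at vertices of \(P\) and made mutually visible along shared diagonals. First, the guard position for a \(1\)-reflex quadrilateral is essentially forced: only the vertex \emph{opposite} the reflex vertex (or points in the kernel near it) can monitor the piece with a single half-guard. In a ``quadrilateral ear'' \([v_{i-1},v_i,v_{i+1},v_{i+2}]\) whose reflex vertex sits at \(v_{i-1}\) or \(v_{i+2}\) (i.e., at an endpoint of the bounding diagonal), the forced guard position is \(v_{i+1}\) or \(v_i\), which is \emph{not} an endpoint of the diagonal; your mechanism for mutual visibility across the diagonal then collapses, and there is no guarantee that the neighboring piece's guard sees a vertex interior to the ear. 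The same problem is worse for the pentagon: T\'oth's theorem does give one half-guard per pentagon, but not at a vertex, and indeed a pentagon with two reflex vertices generally cannot be monitored by a single half-guard stationed at any vertex. The paper's Lemma~\ref{Pent} deliberately places the pentagon's guard at a \emph{non-vertex} boundary point, and the introduction explicitly warns that, unlike the full-guard case, vertex placement cannot be guaranteed here.

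Second, the combinatorial skeleton you rely on is not available: not every even polygon admits a quadrilateral ear (four consecutive vertices cut off by a single diagonal) --- the dual tree of a triangulation can have every leaf's neighbor apexed far away, so ``merging adjacent triangles'' does not terminate in the profile you want, and the induction step ``remove a quadrilateral ear'' can fail outright. The paper circumvents both problems at once: it only ever cuts off a quadrilateral \emph{or} a pentagon (Corollary~\ref{QuadPent}), and --- crucially --- it permits \emph{non-diagonal} cuts obtained by extending an edge \(\vv{uv}\) from a reflex vertex \(v\) until it hits \(\partial P\) at a Steiner point \(b\). Guards are then ``entire boundary half-guards'' at such points, and cooperativeness across a cut \(t\) is obtained not by coincidence of endpoints but by the notion of \(t\)-\emph{aligned} guards (collinear with \(t\), seeing all of \(t\)), together with the strengthened induction hypothesis that for even \(n\) an \(s\)-aligned CHG set of size \((n-2)/2\) exists for \emph{any} prescribed side \(s\) (Theorem~\ref{MainThm}(ii) and Lemma~\ref{SeeEachOther}). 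Without something playing the role of that strengthened hypothesis and of non-vertex guard positions, your induction does not close.
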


Our result is in fact slightly stronger than stated in Theorem A, as we show that if \(P\) is an even-sided art gallery, then the given cooperative half-guard set may be constructed with a half-guard aligned along any given wall in \(P\). 
Additionally, Hern\'andez-Pe\~nalver \cite{HP} showed that if \(P\) is an {\em orthogonal} art gallery with \(n\) walls---i.e., all walls of \(P\) are at right angles to each other---then the bound for cooperative full-guards may be slightly improved to \(\lfloor n/2 \rfloor -2\). We extend this result to cooperative half-guards as well:

\begin{thmB}
{\em 
For \(n \geq 6\), \(n/2 -2\) cooperative half-guards are always sufficient and sometimes necessary to monitor an orthogonal art gallery with \(n\) walls. 
}
\end{thmB}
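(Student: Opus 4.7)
The plan is to prove the upper bound by induction on the number of walls $n$, cutting off a rectangular ``ear'' at each step to reduce to a smaller instance, and to establish the lower bound by an explicit construction of comb-like orthogonal galleries.

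The base case of the upper bound is $n=6$: every orthogonal hexagon is L-shaped, and a single half-guard placed near its unique reflex vertex and oriented into the interior monitors $P$, matching $n/2-2=1$ with a trivially connected singleton mutual visibility graph. For the inductive step with $n\geq 8$, I would exploit the fact---underlying the Kahn--Klawe--Kleitman bound for orthogonal galleries---that every such polygon admits a convex quadrilateralization whose dual is a tree. Each leaf of this tree corresponds to a sub-rectangle $R\subseteq P$ sharing three of its four walls with $\partial P$ and bounded on its fourth side by an axis-aligned chord $c$. Removing $R$ yields a smaller orthogonal polygon $P'$ with $n-2$ walls, which by induction admits $n/2-3$ cooperative half-guards. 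Using the wall-aligned strengthening of Theorem~A (or of Theorem~B applied inductively), I arrange for one of these guards to lie along $c$. A single additional half-guard inside $R$, placed along $c$ and oriented into $R$, monitors all of $R$ and (since $c$ lies in the interior of $P$ and does not itself obstruct visibility) sees the $c$-aligned guard of $P'$. The resulting $n/2-2$ half-guards cooperatively monitor $P$.

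For the lower bound, I would construct an orthogonal gallery $P_n$ resembling a thin comb with deep narrow teeth separated by bridges: each tooth requires a dedicated half-guard for coverage, and the bridges between consecutive teeth force additional bridging half-guards to preserve mutual visibility. Tuning the tooth count and their spacing so that the total equals $n/2-2$ yields the matching lower bound.

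The main obstacle lies in calibrating the inductive step to ensure both coverage and cooperativity simultaneously. The subtlety is that one must choose the ear $R$ so that the chord $c$ is well-positioned relative to the reflex vertices of $P'$: specifically, the wall-aligned strengthening of Theorem~A (which places a guard of $P'$'s cooperative set on $c$) must be compatible with the minimal guard count $n/2-3$ guaranteed by induction. When the guard placed on $c$ in $P'$ is forced to view away from a crucial pocket of $P'$---for instance, a reflex vertex's neighborhood on the far side of $P'$---one may need to choose a different ear or an alternative chord; verifying that a suitable ear always exists for any orthogonal polygon with $n\geq 8$ walls constitutes the technical heart of the argument. In addition, one must ensure that the two $c$-aligned guards can tilt their viewing arcs slightly so as to see each other across $c$ while still covering their respective sub-polygons, a borderline geometric condition that must be checked in each configuration arising in the induction.
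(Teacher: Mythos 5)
There are genuine gaps here. First, your base case is geometrically incorrect as stated: for an L-shaped hexagon, a half-guard placed at (or near) the reflex vertex cannot monitor $P$, because all five convex corners would have to lie in the guard's closed half-plane, yet the reflex vertex lies in the interior of their convex hull. The guard must instead be placed where the extension of an edge incident to the reflex vertex meets the opposite boundary; there the cut splits $P$ into two rectangles, and an entire boundary half-guard at that foot point monitors both (Lemma~\ref{Tri}). Second, and more seriously, your inductive step rests on two unestablished claims: (a) that every orthogonal polygon with $n\geq 8$ walls has a rectangular ``ear'' $R$ sharing three full walls with $\partial P$ whose removal leaves an orthogonal polygon with exactly $n-2$ walls (the leaves of a convex quadrilateralization need not be rectangles, need not have three sides that are full walls, and removal need not drop the wall count by two); and (b) that the smaller polygon admits a \emph{$c$-aligned} cooperative set of size $n/2-3$. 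The aligned strengthening of Theorem~A only yields $\lfloor m/2\rfloor-1=(n-2)/2-1=n/2-2$ guards for the $(n-2)$-walled piece, which after adding the guard in $R$ overshoots to $n/2-1$; and Theorem~B carries no aligned strengthening, so invoking it ``inductively'' would require proving a strictly stronger statement whose base case and inductive step you have not supplied. You flag exactly this as the technical heart of the argument, which is to say the proof is not complete.

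The paper's route (Theorem~\ref{MainThmOrth}) sidesteps the alignment issue entirely: take any reflex vertex $v$ with neighbor $u$ and cut along $\overline{vb}$ where $b$ is the first intersection of $\vv{uv}$ with $\partial P$. If $b$ lies in the relative interior of an edge then $n_1+n_2=n+2$ and a single \emph{new} entire boundary half-guard at $b$ is entire in both pieces, so it joins to each piece's (unaligned) cooperative set by Lemma~\ref{IncludeGuard}; if $b$ is a reflex vertex then $n_1+n_2=n$ and two new guards placed on the cut segment, one entire in each piece, see each other along the cut (Lemma~\ref{SeeEachOther}). In both cases the count comes out to $n/2-2$ with no strengthened induction hypothesis. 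For the necessity direction, you do not need to build a comb from scratch: since any cooperative half-guard set becomes a cooperative full-guard set upon widening the guards, $\cg^\perp(n,180^\circ)\geq\cg^\perp(n,360^\circ)=n/2-2$ by Theorem~\ref{HPthm}, which is how the paper concludes.
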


This theorem appears as Theorem~\ref{MainThmOrth} and Corollary~\ref{MainCorOrth} in the text. Our methods for proving Theorems A and B are inductive and constructive, in that one can inductively follow the steps in the proofs of Theorems~\ref{MainThm} and~\ref{MainThmOrth} in order to construct a cooperative half-guard set of the appropriate size for any art gallery. Our approach requires a mixture of geometrical and combinatorial tools, and as such cannot guarantee that all half-guards are placed at vertices, unlike in the full-guard setting.

The structure of this paper is as follows. In \S\ref{prelimsec} we discuss geometrical and combinatorial preliminaries, and prove a few necessary lemmas on decompositions of polygons. In \S\ref{guardsec} we set our terminology for half-guards. In \S\ref{TriQuadPent} we prove what amounts to base cases of Theorem A for triangles, quadrilaterals, and pentagons. In \S\ref{mergesec}, we describe some conditions under which cooperative half-guard sets for polygons in a decomposition of \(P\) may be merged into a cooperative half-guard set for \(P\). Finally, in \S\ref{mainsec} and \S\ref{mainsecOrth} we prove the main Theorems A and B.

\section{Preliminaries}\label{prelimsec}
Let \(P\) be a simple closed polygon (henceforth, just {\em polygon}) with \(n \geq 3\) sides. We will occasionally designate a polygon via the notation \(P = [v_1, v_2, \ldots, v_n]\), where \(v_1, \ldots, v_n\) are the vertices of \(P\), and \(\overline{v_1 v_2}, \ldots, \overline{v_{n-1} v_n}, \overline{v_n v_1}\) are the sides of \(P\). If the interior angle at a vertex \(v\) of \(P\) is greater than \(180^\circ\), we say that \(v\) is {\em reflex}, and otherwise we say it is {\em convex}.
A polygon is {\em orthogonal} if the interior angle at each vertex is either \(90^\circ\) or \(270^\circ\). Notably, every orthogonal polygon has an even number of sides.

\subsection{Partitions and decompositions}
In this paper, a {\em partition} \(\DD\) of \(P\) is a set of pairwise non-overlapping polygons (i.e., intersecting only in a finite union of line segments) such that \(P = \bigcup_{D \in \DD} D\). We say that \(\DD\) is moreover a {\em decomposition} of \(P\) if the vertices of every \(D \in \DD\) are vertices of \(P\). 

\subsection{Cuts} A {\em cut} in \(P\) is a line segment \(\overline{vw}\), where \(v,w \in \partial P\) and the relative interior of \(\overline{vw}\) is contained in the interior of \(P\). If \(v,w\) are vertices of \(P\), we say the cut \(\overline{vw}\) is a {\em diagonal}.

Assume \(v\) is a reflex vertex in \(P\), adjacent to a vertex \(u\). Then the ray \(\vv{uv}\) passes into the interior of \(P\) after leaving \(v\), and first intersects \(\partial P\) at some point \(b\). We refer to the cut \(\overline{vb}\) as a {\em \(\vv{uv}\)-cut in \(P\)}.

\subsection{Partitions induced by cuts} A cut \(\overline{vw}\) induces a partition of \( \DD = \{P_1, P_2\}\) of \(P\) into polygons \(P_1, P_2\), with side numbers \(n_1, n_2\) respectively, such that \(\overline{vw} \subset \partial P_1, \partial P_2\), and \(P_1 \cap P_2 = \overline{vw}\). We consider some special situations of partitions induced along cuts:
\begin{align*}
\begin{array}{ccccc}
\begin{tikzpicture}[scale=0.8]
\pgfmathsetmacro{\ex}{0.5}
\pgfmathsetmacro{\ey}{0}
\fill[fill = white, opacity = 0.4] (\ex,\ey) ++(30:.5) arc (30:210:.5);
%
\draw[very thick, black, line cap=round, fill=lightgray!10] (1.5,0)--(1,2)--(0.5,1.5)--(1,3)--(2,2)--(2,1)--(3,3)--(4,0.75)--(4.75,1.75)--(3.5,3)--(5,3)--(5.5,1,0)--(4,0)--(3,1)--(1.5,0);
\draw[very thick, cyan, dotted, line cap=round] (3,1)--(3,3);
\draw[very thick, black, line cap=round] (1.5,0)--(1,2)--(0.5,1.5)--(1,3)--(2,2)--(2,1)--(3,3)--(4,0.75)--(4.75,1.75)--(3.5,3)--(5,3)--(5.5,1,0)--(4,0)--(3,1)--(1.5,0);
%
\node[] at (1.55,1.75) {$P_1$};
\node[] at (4.9,1.2) {$P_2$};
\node[left] at (2.9,3) {$v$};
\node[below] at (3,0.8) {$w$};
\end{tikzpicture}
&
&
\begin{tikzpicture}[scale=0.8]
\pgfmathsetmacro{\ex}{0.5}
\pgfmathsetmacro{\ey}{0}
\fill[fill = white, opacity = 0.4] (\ex,\ey) ++(30:.5) arc (30:210:.5);
%
\draw[very thick, black, line cap=round, fill=lightgray!10] (1.5,0)--(1,2)--(0.5,1.5)--(1,3)--(2,2)--(2,1)--(3,3)--(4,0.75)--(4.75,1.75)--(3.5,3)--(5,3)--(5.5,1,0)--(4,0)--(3,1)--(1.5,0);
\draw[very thick, cyan, dotted, line cap=round] (1.5,0)--(2,1);
\draw[very thick, black, line cap=round] (1.5,0)--(1,2)--(0.5,1.5)--(1,3)--(2,2)--(2,1)--(3,3)--(4,0.75)--(4.75,1.75)--(3.5,3)--(5,3)--(5.5,1,0)--(4,0)--(3,1)--(1.5,0);
%
\node[] at (1.55,1.75) {$P_1$};
\node[] at (4.9,1.2) {$P_2$};
\node[right] at (2.1,1.1) {$v$};
\node[left] at (2.9,3) {$u$};
\node[left] at (1.5,0) {$w$};
\end{tikzpicture}
&
&
\begin{tikzpicture}[scale=0.8]
\pgfmathsetmacro{\ex}{0.5}
\pgfmathsetmacro{\ey}{0}
\fill[fill = white, opacity = 0.4] (\ex,\ey) ++(30:.5) arc (30:210:.5);
%
\draw[very thick, black, line cap=round, fill=lightgray!10] (1.5,0)--(1,2)--(0.5,1.5)--(1,3)--(2,2)--(2,1)--(3,3)--(4,0.75)--(4.75,1.75)--(3.5,3)--(5,3)--(5.5,1,0)--(4,0)--(3,1)--(1.5,0);
\draw[very thick, cyan, dotted, line cap=round] (3,1)--(3.65,1.45);
\draw[very thick, black, line cap=round] (1.5,0)--(1,2)--(0.5,1.5)--(1,3)--(2,2)--(2,1)--(3,3)--(4,0.75)--(4.75,1.75)--(3.5,3)--(5,3)--(5.5,1,0)--(4,0)--(3,1)--(1.5,0);
%
\node[] at (1.55,1.75) {$P_1$};
\node[] at (4.9,1.2) {$P_2$};
\node[right] at (3.65,1.55) {$w$};
\node[below] at (3,0.85) {$v$};
\node[left] at (1.5,0) {$u$};
\end{tikzpicture}
\\
\scriptstyle{\textup{{ Fig.\,4: Diagonal cut}}} & & \scriptstyle{\textup{{ Fig.\,5: Diagonal \(\vv{uv}\)-cut}}} & & \scriptstyle{\textup{{ Fig.\,6: Non-diagonal \(\vv{uv}\)-cut}}}
\end{array}
\end{align*}
\begin{enumerate}
\item Assume \(\overline{vw}\) is a diagonal. Then \(\overline{vw}\) is a side in each of \(P_1, P_2\), \(\mathcal{D}\) is a decomposition of \(P\), and \(n_1 + n_2 = n+2\), as in Fig.\,4.
\item Assume \(\overline{vw}\) is a \(\vv{uv}\)-cut for some reflex vertex \(v\) adjacent to a vertex \(u\). Then \(\overline{uw}\) is a side in one polygon in \(\DD\), and \(\overline{vw}\) is a side in the other polygon in \(\DD\). We have \(n_1 + n_2 = n+1\) if \(w\) is a vertex in \(P\) (so that \(\overline{vw}\) is a diagonal), as in Fig.\,5, and \(n_1 + n_2 = 2\) otherwise, as in Fig.\,6.
\end{enumerate}

\subsection{Trees}\label{Trees}
A {\em tree} is a connected graph with no cycles, or equivalently, a graph in which all nodes are connected by exactly one path. We write \(|G|\) for the number of nodes in \(G\). The {\em degree} of a tree \(G\) is the maximum degree of all nodes in \(G\). A {\em rooted tree} is a tree in which one node \(r\) has been designated a {\em root}. The {\em depth} of a node \(x\) in a rooted tree is the length of the path from \(x\) to the root \(r\). For nodes \(x,y\) in a rooted tree, we say that {\em \(y\) is a descendant of \(x\)} if the path from \(y \) to \(r\) includes the node \(x\). 

A {\em forest} is a graph in which all nodes are connected by at most one path. The connected components of a forest are trees.
If \(G\) is a tree and \(\EEE\) is a subset of edges of \(G\), let \(\hat{G}(\EEE)\) designate the forest obtained by deleting all edges in \(\EEE\) from \(G\). Then \(\hat{G}(\EEE)\) has component trees \(G_0, \ldots, G_{k}\), where \(k = |\EEE|\).

\subsection{Triangulations}\label{Triang}
We say that a decomposition \(\TT\) is a {\em triangulation} of \(P\) if every \(T \in \TT\) is a triangle. It is well known that triangulations always exist, and every triangulation of \(P\) necessarily consists of \(n-2\) triangles. The sides of triangles in \(\TT\) are diagonals or sides in \(P\).

If \(\TT\) is a triangulation of \(P\), we write \(G_\TT\) for the {\em (weak) dual graph} of the triangulation. The nodes of \(G_\TT\) are the triangles in \(\TT\), and two nodes \(T_1,T_2 \in \TT\) are connected by an edge in \(G_\TT\) if and only if the triangles \(T_1,T_2\) share a side; i.e., their intersection is a diagonal in \(P\).
The dual graph \(G_\TT\) is a tree with \(n-2\) nodes and degree less than or equal to three.
\begin{align*}
\begin{array}{ccccc}
\begin{tikzpicture}[scale=0.8]
%
\draw[very thick, black, line cap=round, fill=lightgray!10] (0.5,0)--(0,2)--(1,2)--(2,3)--(3.5,2.5)--(2.5,2)--(3,1)--(4,2)--(3.5,0)--(1.5,0)--(1,1)--(0.5,0);
\draw[very thick, cyan, dotted, line cap=round] (0,2)--(1,1);
\draw[very thick, cyan, dotted, line cap=round] (1,2)--(1,1);
\draw[very thick, cyan, dotted, line cap=round] (3.5,0)--(1,1);
\draw[very thick, cyan, dotted, line cap=round] (3,1)--(1,1);
\draw[very thick, cyan, dotted, line cap=round] (3,1)--(3.5,0);
\draw[very thick, cyan, dotted, line cap=round] (3,1)--(1,2);
\draw[very thick, cyan, dotted, line cap=round] (2.5,2)--(1,2);
\draw[very thick, cyan, dotted, line cap=round] (2.5,2)--(2,3);
\draw[very thick, black, line cap=round] (0.5,0)--(0,2)--(1,2)--(2,3)--(3.5,2.5)--(2.5,2)--(3,1)--(4,2)--(3.5,0)--(1.5,0)--(1,1)--(0.5,0);
%
\end{tikzpicture}
&
&
\begin{tikzpicture}[scale=0.8]
%
\draw[very thick, black, line cap=round, fill=lightgray!10] (0.5,0)--(0,2)--(1,2)--(2,3)--(3.5,2.5)--(2.5,2)--(3,1)--(4,2)--(3.5,0)--(1.5,0)--(1,1)--(0.5,0);
\draw[very thick, cyan, dotted, line cap=round] (0,2)--(1,1);
\draw[very thick, cyan, dotted, line cap=round] (1,2)--(1,1);
\draw[very thick, cyan, dotted, line cap=round] (3.5,0)--(1,1);
\draw[very thick, cyan, dotted, line cap=round] (3,1)--(1,1);
\draw[very thick, cyan, dotted, line cap=round] (3,1)--(3.5,0);
\draw[very thick, cyan, dotted, line cap=round] (3,1)--(1,2);
\draw[very thick, cyan, dotted, line cap=round] (2.5,2)--(1,2);
\draw[very thick, cyan, dotted, line cap=round] (2.5,2)--(2,3);
\draw[very thick, black, line cap=round] (0.5,0)--(0,2)--(1,2)--(2,3)--(3.5,2.5)--(2.5,2)--(3,1)--(4,2)--(3.5,0)--(1.5,0)--(1,1)--(0.5,0);
\draw[thick, fill=black]  (0.55,1) circle (0.1);
\draw[thick, fill=black]  (0.75,1.6) circle (0.1);
\draw[thick, fill=black]  (1.4,1.35) circle (0.1);
\draw[thick, fill=black]  (2.3,1.7) circle (0.1);
\draw[thick, fill=black]  (2.7,0.65) circle (0.1);
\draw[thick, fill=black]  (3.45,1) circle (0.1);
\draw[thick, fill=black]  (1.8,0.35) circle (0.1);
\draw[thick, fill=black]  (1.9,2.35) circle (0.1);
\draw[thick, fill=black]  (2.65,2.45) circle (0.1);
\draw[thick, line cap=round] (0.55,1)--(0.75,1.6);
\draw[thick, line cap=round] (1.4,1.35)--(0.75,1.6);
\draw[thick, line cap=round] (1.4,1.35)--(2.3,1.7);
\draw[thick, line cap=round] (1.4,1.35)-- (2.7,0.65);
\draw[thick, line cap=round] (3.45,1)-- (2.7,0.65);
\draw[thick, line cap=round] (1.8,0.35)-- (2.7,0.65);
\draw[thick, line cap=round]  (2.65,2.45)-- (1.9,2.35);
\draw[thick, line cap=round] (2.3,1.7)-- (1.9,2.35);
\end{tikzpicture}
&
&
\begin{tikzpicture}[scale=0.8]
%
\draw[very thick, white, line cap=round] (0.5,0)--(0,2)--(1,2)--(2,3)--(3.5,2.5)--(2.5,2)--(3,1)--(4,2)--(3.5,0)--(1.5,0)--(1,1)--(0.5,0);
\draw[thick, fill=black]  (0.55,1) circle (0.1);
\draw[thick, fill=black]  (0.75,1.6) circle (0.1);
\draw[thick, fill=black]  (1.4,1.35) circle (0.1);
\draw[thick, fill=black]  (2.3,1.7) circle (0.1);
\draw[thick, fill=black]  (2.7,0.65) circle (0.1);
\draw[thick, fill=black]  (3.45,1) circle (0.1);
\draw[thick, fill=black]  (1.8,0.35) circle (0.1);
\draw[thick, fill=black]  (1.9,2.35) circle (0.1);
\draw[thick, fill=black]  (2.65,2.45) circle (0.1);
\draw[thick, line cap=round] (0.55,1)--(0.75,1.6);
\draw[thick, line cap=round] (1.4,1.35)--(0.75,1.6);
\draw[thick, line cap=round] (1.4,1.35)--(2.3,1.7);
\draw[thick, line cap=round] (1.4,1.35)-- (2.7,0.65);
\draw[thick, line cap=round] (3.45,1)-- (2.7,0.65);
\draw[thick, line cap=round] (1.8,0.35)-- (2.7,0.65);
\draw[thick, line cap=round]  (2.65,2.45)-- (1.9,2.35);
\draw[thick, line cap=round] (2.3,1.7)-- (1.9,2.35);
\end{tikzpicture}
\\
\scriptstyle{\textup{{ Fig.\,7: Triangulation \(\TT\) of \(P\)}}} & & \scriptstyle{\textup{{ Fig.\,8: Superimposed dual graph }}} & & \scriptstyle{\textup{{ Fig.\,9: Dual graph \(G_\TT\)}}}
\end{array}
\end{align*}

If \(\EEE\) is a subset of \(k\) edges in \(G_\TT\), we may specify an associated decomposition \(\hat{\DD}_\TT(\EEE)\) of \(P\) into \(k+1\) polygons, as shown in Figs.\,10--12 below. Let \(\hat{G}_\TT(\EEE)\) be the forest obtained by deleting the edges in \(\EEE\), as described in \S\ref{Trees}, which has component trees \(G_0, \ldots, G_{k}\). For each \(G_i\), we have an associated polygon \(P_i\) which is the union of the nodes (triangles) in \(G_i\). Then \(\hat{D}_\TT(\EEE) = \{P_0, \ldots, P_{k}\}\) is a decomposition of \(P\), where \(P_i, P_j\) intersect in a diagonal \(d\) if and only if \(d\) corresponds to the edge in \(\EEE\) which connects \(G_i\) to \(G_j\) in \(G_\TT\).
Moreover, the nodes of \(G_i\) comprise a triangulation of \(P_i\), so in particular \(P_i\) is a \((|G_i| + 2)\)-sided polygon.

\begin{align*}
\begin{array}{ccccc}
\begin{tikzpicture}[scale=0.8]
%
\draw[very thick, white, line cap=round] (0.5,0)--(0,2)--(1,2)--(2,3)--(3.5,2.5)--(2.5,2)--(3,1)--(4,2)--(3.5,0)--(1.5,0)--(1,1)--(0.5,0);
\draw[thick, line cap=round] (0.55,1)--(0.75,1.6);
\draw[thick, red, dotted, line cap=round] (1.4,1.35)--(0.75,1.6);
\draw[thick,red, dotted,  line cap=round] (1.4,1.35)--(2.3,1.7);
\draw[thick,  line cap=round] (1.4,1.35)-- (2.7,0.65);
\draw[thick, line cap=round] (3.45,1)-- (2.7,0.65);
\draw[thick, red, dotted,  line cap=round] (1.8,0.35)-- (2.7,0.65);
\draw[thick, line cap=round]  (2.65,2.45)-- (1.9,2.35);
\draw[thick, line cap=round] (2.3,1.7)-- (1.9,2.35);
\draw[thick, fill=black]  (0.55,1) circle (0.1);
\draw[thick, fill=black]  (0.75,1.6) circle (0.1);
\draw[thick, fill=black]  (1.4,1.35) circle (0.1);
\draw[thick, fill=black]  (2.3,1.7) circle (0.1);
\draw[thick, fill=black]  (2.7,0.65) circle (0.1);
\draw[thick, fill=black]  (3.45,1) circle (0.1);
\draw[thick, fill=black]  (1.8,0.35) circle (0.1);
\draw[thick, fill=black]  (1.9,2.35) circle (0.1);
\draw[thick, fill=black]  (2.65,2.45) circle (0.1);
\end{tikzpicture}
&
\begin{tikzpicture}[scale=0.8]
%
\draw[very thick, black, line cap=round, fill=lightgray!10] (0.5,0)--(0,2)--(1,2)--(2,3)--(3.5,2.5)--(2.5,2)--(3,1)--(4,2)--(3.5,0)--(1.5,0)--(1,1)--(0.5,0);
\draw[very thick, cyan, dotted, line cap=round] (0,2)--(1,1);
\draw[very thick, cyan, dotted, line cap=round] (1,2)--(1,1);
\draw[very thick, cyan, dotted, line cap=round] (3.5,0)--(1,1);
\draw[very thick, cyan, dotted, line cap=round] (3,1)--(1,1);
\draw[very thick, cyan, dotted, line cap=round] (3,1)--(3.5,0);
\draw[very thick, cyan, dotted, line cap=round] (3,1)--(1,2);
\draw[very thick, cyan, dotted, line cap=round] (2.5,2)--(1,2);
\draw[very thick, cyan, dotted, line cap=round] (2.5,2)--(2,3);
\draw[very thick, black, line cap=round] (0.5,0)--(0,2)--(1,2)--(2,3)--(3.5,2.5)--(2.5,2)--(3,1)--(4,2)--(3.5,0)--(1.5,0)--(1,1)--(0.5,0);
\draw[thick, line cap=round] (0.55,1)--(0.75,1.6);
\draw[thick,  line cap=round] (1.4,1.35)-- (2.7,0.65);
\draw[thick, line cap=round] (3.45,1)-- (2.7,0.65);
\draw[thick, line cap=round]  (2.65,2.45)-- (1.9,2.35);
\draw[thick, line cap=round] (2.3,1.7)-- (1.9,2.35);
\draw[thick, fill=black]  (0.55,1) circle (0.1);
\draw[thick, fill=black]  (0.75,1.6) circle (0.1);
\draw[thick, fill=black]  (1.4,1.35) circle (0.1);
\draw[thick, fill=black]  (2.3,1.7) circle (0.1);
\draw[thick, fill=black]  (2.7,0.65) circle (0.1);
\draw[thick, fill=black]  (3.45,1) circle (0.1);
\draw[thick, fill=black]  (1.8,0.35) circle (0.1);
\draw[thick, fill=black]  (1.9,2.35) circle (0.1);
\draw[thick, fill=black]  (2.65,2.45) circle (0.1);
\end{tikzpicture}
&
\begin{tikzpicture}[scale=0.8]
%
\draw[very thick, black, line cap=round, fill=lightgray!10] (0.5,0)--(0,2)--(1,2)--(2,3)--(3.5,2.5)--(2.5,2)--(3,1)--(4,2)--(3.5,0)--(1.5,0)--(1,1)--(0.5,0);
\draw[very thick, cyan, dotted, line cap=round] (1,2)--(1,1);
\draw[very thick, cyan, dotted, line cap=round] (3.5,0)--(1,1);
\draw[very thick, cyan, dotted, line cap=round] (3,1)--(1,2);
%
\draw[very thick, black, line cap=round] (0.5,0)--(0,2)--(1,2)--(2,3)--(3.5,2.5)--(2.5,2)--(3,1)--(4,2)--(3.5,0)--(1.5,0)--(1,1)--(0.5,0);
%
\end{tikzpicture}
\\
\scriptstyle{\textup{{ Fig.\,10: Selected edges \(\EEE\) in \(G_\TT\)}}} & \scriptstyle{\textup{{ Fig.\,11: Forest \(\hat{G}_\TT(\EEE)\) superimposed on \(P\) }}} & \scriptstyle{\textup{{ Fig.\,12: Decomposition \(\hat{\DD}_\TT(\EEE)\) of \(P\)  }}}
\end{array}
\end{align*}

\subsection{Some useful decomposition results}
In this subsection we establish some decomposition results which will prove useful in \S\ref{mainsec}. In all cases we make use of the method of specifying decompositions using the triangulation dual graph described in \S\ref{Triang}.

\begin{Lemma}\label{OddOdd}
Let \(G\) be a tree, with \(|G|\) even. Let \(x\) be a node in \(G\) of degree one or two. Then there exists an edge \(e\) incident to \(x\) such that \(\hat{G}(\{e\})\) has component trees \(G_0, G_1\) with \(|G_0|, |G_1|\) odd.
\end{Lemma}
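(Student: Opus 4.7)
The plan is to do a short case analysis on the degree of $x$, using elementary parity considerations on the sizes of the components obtained by deleting an edge from a tree.

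First I would handle the case where $x$ has degree one. In this case there is a unique edge $e$ incident to $x$, and $\hat{G}(\{e\})$ has component trees of sizes $1$ and $|G|-1$. Both are odd, since $|G|$ is even, and we are done.

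The main case is when $x$ has degree two. Let $e_1, e_2$ be the two edges incident to $x$, and root $G$ at $x$. Then the two children of $x$ determine two subtrees $T_1, T_2$ (consisting of all descendants of those children, plus the children themselves) of sizes $a_1 := |T_1|$ and $a_2 := |T_2|$. Since every node other than $x$ lies in exactly one of $T_1, T_2$, we have $a_1 + a_2 = |G| - 1$, which is odd, so exactly one of $a_1, a_2$ is odd. Say without loss of generality that $a_i$ is odd. Then $\hat{G}(\{e_i\})$ has component trees of sizes $a_i$ and $|G| - a_i$, which are both odd (the latter because $|G|$ is even), so $e_i$ is the desired edge.

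The statement is essentially a parity computation, so I do not foresee any genuine obstacle; the only thing to be a bit careful about is making sure the decomposition of $G \setminus \{x\}$ into the two subtrees $T_1, T_2$ is set up unambiguously in the degree-two case, which the rooting at $x$ accomplishes cleanly.
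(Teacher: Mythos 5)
Your proof is correct and follows essentially the same route as the paper's: handle degree one directly, and in the degree-two case use the fact that the two subtrees hanging off $x$ have sizes summing to the odd number $|G|-1$, so exactly one is odd, and cutting the corresponding edge leaves two odd components. The rooting at $x$ is just a slightly different bookkeeping for what the paper does by deleting both incident edges at once.
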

\begin{proof}
First assume the degree of \(x\) is one. Taking \(e\) to be the edge incident to \(x\), we have that \(\hat{G}(\{e\})\) has component trees \(G_0 = \{x\}\) and \(G_1\), where \(|G_1| = |G| -1\) is odd. Thus \(e\) satisfies the claim.

Now assume the degree of \(x\) is two. Let \(e_1, e_2\) be the edges incident to \(x\). Then we have that \(\hat{G}(\{e_1, e_2\})\) has component trees \(G_0 = \{x\}, G_1, G_2\), where the edge \(e_1\) connects \(x\) to a node in \(G_1\) and \(e_2\) connects \(x\) to a node in \(G_2\). We have \(|G_1| + |G_2| = |G|-1\), so \(|G_1|, |G_2|\) have opposite parity. Assume without loss of generality that \(|G_1|\) is odd. Then we have that \(\hat{G}(\{e_1\})\) has component trees \(G_1\) and some tree \(G_2'\). As \(|G_1|\) is odd and \(|G_2'| = |G| - |G_1|\) is odd, we have that \(e_1\) satisfies the claim.
\end{proof}

\begin{Corollary}\label{OddOddDiag}
Let \(P\) be a polygon with \(n \geq 4\) sides, where \(n\) is even. Let \(s\) be a side in \(P\). Then there exists a diagonal \(d\) which contains an endpoint of \(s\) and induces a decomposition of \(P\) into a pair of odd-sided polygons.
\end{Corollary}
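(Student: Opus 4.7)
The plan is to reduce the statement to Lemma \ref{OddOdd} by passing to the dual graph of a triangulation of $P$. First, choose any triangulation $\TT$ of $P$. By \S\ref{Triang}, the dual graph $G_\TT$ is a tree with $n-2$ nodes. Since $n$ is even, $|G_\TT|$ is even, putting us in a position to apply Lemma \ref{OddOdd} if we can locate an appropriate node of small degree.

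The natural choice of node is the triangle $T \in \TT$ having $s$ as a side (such a $T$ is unique, since $s$ is a boundary side of $P$). Since one of the three sides of $T$ is $s$ itself, which lies on $\partial P$ and is therefore not a diagonal, at most two sides of $T$ are shared with other triangles of $\TT$. Thus the degree of $T$ in $G_\TT$ is at most $2$. It is also at least $1$: if it were $0$, then $T$ would be the sole node of $G_\TT$, forcing $n = 3$, contradicting $n \geq 4$. Hence Lemma \ref{OddOdd} applies with $x = T$, producing an edge $e$ of $G_\TT$ incident to $T$ such that the forest $\hat{G}_\TT(\{e\})$ has two component trees $G_0, G_1$ of odd size.

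Translating back to $P$, the edge $e$ corresponds to a diagonal $d$ of $P$: namely, the shared side between $T$ and the triangle $T' \in \TT$ on the other end of $e$. Because $d$ is a side of the triangle $T$ distinct from $s$, and any two distinct sides of a triangle share a vertex, $d$ contains an endpoint of $s$, as required. The decomposition $\hat{\DD}_\TT(\{e\}) = \{P_0, P_1\}$ described in \S\ref{Triang} is then induced by the diagonal $d$, with $P_i$ having $|G_i| + 2$ sides. Since $|G_0|$ and $|G_1|$ are odd, so are $|G_0|+2$ and $|G_1|+2$, and both $P_0$ and $P_1$ are odd-sided, completing the proof.

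The argument is essentially a straightforward translation between the combinatorial dual graph and the geometric decomposition, so no serious obstacle is anticipated; the only points requiring care are verifying that $T$ has degree $1$ or $2$ (handled by the observation that $s$ contributes one non-diagonal side and that $n \geq 4$ precludes degree $0$), and verifying that the selected diagonal actually shares a vertex with $s$ (which is automatic since $d$ and $s$ are two sides of the same triangle $T$).
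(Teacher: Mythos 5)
Your proof is correct and follows essentially the same route as the paper: triangulate $P$, observe that the triangle $T$ containing $s$ has degree one or two in the dual tree $G_\TT$ (which has $n-2$ nodes, even), apply Lemma~\ref{OddOdd} to get an edge incident to $T$, and translate back to a diagonal sharing a vertex with $s$. Your version is in fact slightly more careful than the paper's, which contains a parity typo (asserting $|G_\TT|=n-2$ is ``odd'') and does not explicitly rule out degree zero for $T$ as you do via $n\geq 4$.
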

\begin{proof}
Let \(\TT\) be a triangulation of \(P\). We have that \(|G_\TT| = n -2\) is odd. There exists one triangle \(T\) such that \(s\) is a side in \(T\). Then at most two of the sides of \(T\) are diagonals in \(P\), so the degree of the node \(T\) in \(G_\TT\) is one or two. Then by Lemma~\ref{OddOdd} there exists an edge \(e\) incident to the node \(T\) in \(G_\TT\) such that \(\hat{G}_\TT(\{e\})\) has component trees \(G_0, G_1\) with \(|G_0|, |G_1|\) odd. The edge \(e\) corresponds to some side \(d\) in \(T\) which is a diagonal in \(P\), and thus necessarily contains one of the endpoints of \(s\). Then \(d\) induces a decomposition \(\hat{\DD}(\{e\})\) of \(P\) into odd-sided polygons \(P_1\), \(P_2\), as required.
\end{proof}

\begin{Lemma}\label{QuadPentGraph}
Let \(G\) be a tree with degree less than or equal to three, and \(|G| \geq 3\). Then there exists an edge \(e \in G\) such that \(\hat{G}(\{e\})\) has component trees \(G_0, G_1\), where \(|G_0| \in \{2,3\}\).\end{Lemma}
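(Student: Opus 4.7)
The plan is to locate an appropriate edge by considering an endpoint of a longest path in $G$. Let $v_0, v_1, \ldots, v_k$ be a longest path in $G$. Since $|G| \geq 3$ and $G$ is a tree, this path must have length at least two, so $k \geq 2$ and in particular the edge $v_1 v_2$ exists. The node $v_0$ is necessarily a leaf, and $v_1$ has degree at least $2$ (being adjacent to both $v_0$ and $v_2$) and at most $3$ (by the hypothesis on $G$). I will show that deleting the edge $e = v_1 v_2$ always yields a component of size $2$ or $3$ containing $v_0$ and $v_1$.

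First I would handle the case $\deg(v_1) = 2$. Here the only neighbors of $v_1$ are $v_0$ and $v_2$, so the component of $\hat{G}(\{e\})$ containing $v_1$ is precisely $\{v_0, v_1\}$, which has size $2$. In the case $\deg(v_1) = 3$, let $w$ be the third neighbor of $v_1$, distinct from $v_0$ and $v_2$. The key observation is that $w$ must itself be a leaf: otherwise $w$ would have some neighbor $w' \neq v_1$, and then $w', w, v_1, v_2, \ldots, v_k$ would be a path in $G$ of length $k+1$, contradicting the maximality of the chosen path. Consequently the component of $\hat{G}(\{e\})$ containing $v_1$ equals $\{v_0, v_1, w\}$, which has size $3$.

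In either case the resulting forest $\hat{G}(\{e\})$ has exactly two component trees (since we remove a single edge from a tree), and one of them has size in $\{2,3\}$, which we may take as $G_0$.

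The main obstacle is really just the longest-path argument used to force the extra neighbor $w$ of $v_1$ to be a leaf; this is where the degree-at-most-three hypothesis combines with the structure of $G$ to keep the small component from growing beyond size $3$. Everything else is direct bookkeeping about what remains after deleting a single edge from a tree.
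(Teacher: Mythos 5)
Your proof is correct and takes essentially the same extremal approach as the paper's: the paper roots the tree at a leaf and cuts just above a deepest degree-two node (or above a deepest degree-three node together with its two pendant leaves), which is the rooted-tree counterpart of your longest-path argument. Your single case split on the degree of $v_1$ is in fact slightly cleaner than the paper's nested cases, but the underlying idea---that the degree-at-most-three bound caps the pendant component at two or three nodes---is identical.
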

\begin{proof}
The graph \(G\) has some leaf \(r\). Choose \(r\) to be the root of \(G\). First assume \(G\) has no nodes of degree three. Then let \(y\) be a node of maximum depth in \(G\). As \(|G| \geq 3\), the depth of \(y\) must be two or more, so \(y\) has degree one and has a parent \(x\) of degree two. Let \(e\) be the edge connecting \(x\) to its parent. Then \(\hat{G}(\{e\})\) has component trees \(G_0, G_1\), where \(G_0\) consists of the nodes \(x,y\), satisfying the claim.

Now assume that \(G\) has degree three.
Let $x$ be a node of degree three and maximum depth, so that all descendants of $x$ are less than degree three. Then there exist exactly two leaves $y_1$ and $y_2$ that are descendants of $x$. We consider the two possibilities separately:

\textit{Case 1:}
Assume one of the leaves, say $y_1$, has a degree two parent $z$. Let $e$ be the edge connecting $z$ to its parent. 
Then \(\hat{G}(\{e\})\) has component trees \(G_0, G_1\), where \(G_0\) consists of the nodes \(x,y_1\), satisfying the claim.

\textit{Case 2:}
Assume $y_1$ and $y_2$ both have degree three parents. Then, by the assumption on \(x\), they must have the same parent \(x\).
Let $e$ be the edge connecting $x$ to its parent. 
Then \(\hat{G}(\{e\})\) has component trees \(G_0, G_1\), where \(G_0\) consists of the nodes \(x,y_1,y_2\), satisfying the claim.
\end{proof}

\begin{Corollary}\label{QuadPent}
Let \(P\) be a polygon with \(n \geq 5\) sides. Then there exists a diagonal \(d\) in \(P\) which induces a decomposition of \(P\) into polygons \(P_0, P_1\), where \(P_0\) is a quadrilateral or a pentagon.
\end{Corollary}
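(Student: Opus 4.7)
The plan is to apply Lemma~\ref{QuadPentGraph} directly to the dual graph of any triangulation of \(P\), and then translate the tree-theoretic conclusion back into a statement about the induced decomposition, using the framework set up in \S\ref{Triang}.

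First I would fix a triangulation \(\TT\) of \(P\). By \S\ref{Triang}, the dual graph \(G_\TT\) is a tree with \(n-2\) nodes and maximum degree at most three. Since \(n \geq 5\), we have \(|G_\TT| \geq 3\), so the hypotheses of Lemma~\ref{QuadPentGraph} are satisfied. Applying the lemma yields an edge \(e\) in \(G_\TT\) such that the forest \(\hat{G}_\TT(\{e\})\) has component trees \(G_0, G_1\) with \(|G_0| \in \{2,3\}\).

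Next I would unpack what this means geometrically. The edge \(e\) of \(G_\TT\) corresponds to a shared side between two triangles of \(\TT\), which is a diagonal \(d\) of \(P\). By the construction of \(\hat{\DD}_\TT(\{e\})\) described in \S\ref{Triang}, the decomposition induced by cutting \(P\) along \(d\) consists of two polygons \(P_0, P_1\), where \(P_i\) is the union of triangles in \(G_i\), and \(P_i\) has \(|G_i|+2\) sides. Since \(|G_0| \in \{2,3\}\), \(P_0\) has either \(4\) or \(5\) sides, so \(P_0\) is a quadrilateral or pentagon, as required.

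There is essentially no obstacle here: the content is entirely in Lemma~\ref{QuadPentGraph}, and this corollary is just the geometric translation via the dictionary \(\TT \leftrightarrow G_\TT\) and the side-count formula \(|P_i| = |G_i| + 2\).
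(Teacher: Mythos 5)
Your proposal is correct and follows exactly the same route as the paper's proof: triangulate \(P\), note \(|G_\TT| = n-2 \geq 3\) with maximum degree at most three, apply Lemma~\ref{QuadPentGraph} to obtain the edge \(e\), and translate back via the correspondence \(e \leftrightarrow d\) and the side-count \(|G_i|+2\). There is nothing to add.
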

\begin{proof}
Let \(\TT\) be a triangulation of \(P\). Then \(|G_\TT| = n-2 \geq 3\), so by Lemma~\ref{QuadPentGraph}, \(G_\TT\) contains some edge \(e\) such that \(\hat{G}(\{e\})\) has component trees \(G_1, G_2\), where \(|G_1| \in \{2,3\}\). Then \(e\) corresponds to a diagonal \(d\) in \(P\) which induces a decomposition \(\hat{D}_\TT(\{e\})\) into polygons \(P_0, P_1\), where \(P_i\) has sides numbering \(|G_i| + 2\). But then \(P_0\) is a quadrilateral or a pentagon, completing the proof.
\end{proof}

\begin{Lemma}\label{PentDecompGraph}
Let \(G\) be a tree of degree less than or equal to three with \(|G| \geq 3\) and \(|G|\) odd. Then there exists \(0 \leq k \leq 5\) and a set of \(k\) edges \(\EEE\) of \(G\) such that the component trees \(G_0, \ldots, G_k\) of \(\hat{G}(\EEE)\) satisfy:
\begin{enumerate}
\item \(|G_0|=3\) and \(|G_1|, \ldots, |G_k|\) are even;
\item For \(1 \leq i \leq k\), there exists an edge \(e_i\) in \(\EEE\) which connects \(G_i\) to \(G_0\).
\end{enumerate}
\end{Lemma}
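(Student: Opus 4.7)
The plan is to prove the statement by strong induction on $|G|$. The base case $|G| = 3$ is immediate: take $G_0 = G$ and $k = 0$, so both conditions hold vacuously. For the inductive step with $|G| \geq 5$ odd, I would root $G$ at a leaf $r$, let $c$ be its unique neighbor, and write $s(v)$ for the subtree size at $v$ in the rooted tree, so that $s(c) = |G|-1$ is even. The degree bound forces $c$ to have either one child $x$ (Case A) or two children $x_1, x_2$ (Case B); in Case B, $s(x_1) + s(x_2) = s(c) - 1$ is odd, so I may assume $s(x_1)$ is odd and $s(x_2)$ is even. Set $v = x$ in Case A and $v = x_1$ in Case B, so in both cases $s(v)$ is odd.

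The natural first choice is $G_0 = \{r, c, v\}$, whose boundary edges are the $v$-to-child edges together with $c$-to-$x_2$ in Case B, producing components $T_w$ for each child $w$ of $v$ and (in Case B) the even-sized $T_{x_2}$. A short parity count using $s(v)$ odd shows that each $T_w$ is even when $v$ has $0$ or $1$ child, and the same holds when $v$ has two children $y_1, y_2$ with $s(y_1), s(y_2)$ both even. The only obstruction is when $v$ has two children both of odd subtree size; in that exceptional subcase, $T_v$ has odd size at least $3$ and max degree $\leq 3$, so the inductive hypothesis supplies $G_0' \subset T_v$ which I claim serves as our $G_0$ in $G$.

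To verify the recursive $G_0'$, observe that its cut edges in $G$ are precisely those in $T_v$, plus the edge $v$-to-$c$ if $v \in G_0'$. Every component of $T_v \setminus G_0'$ not containing $v$ is preserved in $G$ with its even size. The ``outside'' of $T_v$ in $G$ is $\{r, c\}$ in Case A (size $2$) or $\{r, c\} \cup T_{x_2}$ in Case B (size $2 + s(x_2)$, even); if $v \notin G_0'$ this outside attaches to the $v$-component of $T_v \setminus G_0'$ giving an even-sized component, while if $v \in G_0'$ the outside forms its own component, already even. Condition (ii) follows by tracking the cut edge which witnesses each component's adjacency to $G_0'$, and the bound $k \leq 5$ is automatic since any $3$-node connected subgraph of a max-degree-$3$ tree has at most $3 \cdot 3 - 2 \cdot 2 = 5$ boundary edges.

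The main obstacle is the parity bookkeeping in the recursive step: one must verify that every component of $G \setminus G_0'$ ends up even-sized, and this depends sensitively on the prior case choices forcing $s(x_2)$ to be even in Case B and on the $\{r,c\}$ pair contributing exactly $2$ extra nodes to the outside. Rooting at a leaf is what makes this bookkeeping clean and lets the exceptional both-odd subcase dissolve into a straightforward recursion rather than requiring ad hoc small-case constructions.
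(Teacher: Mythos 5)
Your proof is correct, but it takes a genuinely different route from the paper's. The paper first proves a separate structural lemma (Lemma~\ref{QuadPentGraph}) producing an edge $e$ whose removal splits off a component of size $2$ or $3$; if that piece has size $3$ the claim follows at once with $k=1$, and if it has size $2$ the paper recurses on the remaining odd tree and then re-attaches the $2$-node piece, either absorbing it into an adjacent even component (preserving parity) or hanging it off $G_0$ as a new even component, with the degree bound forcing $k''\le 4$ so that $k=k''+1\le 5$. You instead root at a leaf, isolate the canonical path $\{r,c,v\}$ with $s(v)$ odd, and either use it directly as $G_0$ (when the subtrees hanging off $v$ are all even) or recurse into $T_v$ and glue the even complement $\{r,c\}$ or $\{r,c\}\cup T_{x_2}$ back onto the resulting decomposition. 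Your version is self-contained --- it does not need Lemma~\ref{QuadPentGraph} --- and makes the parity bookkeeping explicit via subtree sizes; the paper's version is shorter because the auxiliary lemma does the structural work and is needed elsewhere anyway (for Corollary~\ref{QuadPent}). Both arguments obtain $k\le 5$ the same way, by observing that every cut edge is one of the at most $3\cdot 3-2\cdot 2=5$ boundary edges of the $3$-node set $G_0$; in your recursive step it is worth stating explicitly that $\EEE''$ consists precisely of the $k''$ witness edges (distinct components have distinct witnesses and $|\EEE''|=k''$), since that is what guarantees every edge of $\EEE$ is incident to $G_0$ and hence that adjoining the edge $vc$ when $v\in G_0'$ keeps $k\le 5$.
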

\begin{proof}
We go by induction on \(|G|\). For the base case, note that if \(|G|=3\), then taking \(k=0\), \(G_0\) satisfies the claim. 

Now assume \(|G| > 3\), and the claim holds for smaller trees by induction. By Lemma~\ref{QuadPentGraph}, there exists an edge \(e \in G\) such that deleting \(e\) from \(G\) leaves component trees \(G'_1, G'_2\), where \(|G'_1| \in \{2,3\}\). First assume \(|G'_0|=3\). Then \(|G'_1| = |G| - 3\) is even, so taking \(k = 1\), \(\EEE = \{e\}\), \(G_0 =G_0'\), \(G_1 = G_1'\) satisfies the claim.

Now assume \(|G'_0|=2\). Then \(|G'_1| = |G| -2\) is odd. By the induction assumption there exists \(0 \leq k'' \leq 5\) and a set of \(k''\) edges \(\EEE''\) of \(G_1'\) with component trees \(G_0'', \ldots, G_{k''}''\) of \(\hat{G}'_1(\EEE'')\) that satisfy \(|G_0''|=3\), \(|G_1''|, \ldots, |G_k''|\) are even, and for \(1 \leq i \leq k''\), there exists an edge \(e_i''\) in \(\EEE\) which connects \(G_i''\) to \(G_0''\). The edge \(e\) in \(G\) must connect \(G_0'\) to exactly one of the components \(G_i''\).

Assume first that \(e\) connects \(G_0'\) to \(G_i''\), for some \(i \in \{1, \ldots, k''\}\). Then, let \(k = k''\), \(\EEE = \EEE''\), \(G_j = G_j''\) for \(0 \leq j \leq k''\) with \(j \neq i\), and let \(G_i\) be the tree consisting of \(G_0'\) and \(G_i''\) connected by the edge \(e\). As \(|G_i| = |G_i''| + 2\) is even, this choice of \(k, \EEE, G_0, \ldots, G_k\) satisfies the claim.

Finally, assume \(e\) connects \(G_0'\) to \(G_0''\). Note that, since the maximum degree of nodes in \(G\) is three, the nodes in \(G_0''\) can be adjacent to at most five nodes outside of \(G_0''\) in \(G\). Therefore we have \(0 \leq k'' \leq 4\). Thus, taking \(k = k'' +1\), \(\EEE = \EEE'' \cup \{e\}\), \(G_i = G_i''\) for \(1 \leq i \leq k''\), and \(G_k = G_0'\) satisfies the claim. This completes the induction step, and the proof.
\end{proof}

\begin{Corollary}\label{PentDecomp}
Let \(P\) be a polygon with \(n \geq 5\) sides, where \(n\) is odd. Then there exists \(0 \leq k \leq 5\) and a decomposition \(\mathcal{D} = \{P_0, P_1, \ldots, P_k\}\) of \(P\) such that 
\begin{enumerate}
\item \(P_0\) is a pentagon, and \(P_1, \ldots, P_k\) are even-sided polygons;
\item For \(1 \leq i \leq k\), \(P_i \cap P_0\) is a diagonal in \(P\).
\end{enumerate}
\end{Corollary}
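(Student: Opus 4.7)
The plan is to deduce Corollary~\ref{PentDecomp} from Lemma~\ref{PentDecompGraph} in exactly the same way that Corollary~\ref{QuadPent} was deduced from Lemma~\ref{QuadPentGraph}: triangulate $P$, pass to the dual tree, apply the graph-theoretic lemma, and then translate the resulting edge deletions back into a polygonal decomposition via the correspondence recorded in \S\ref{Triang}.

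First I would fix a triangulation $\TT$ of $P$. Since $P$ has $n$ sides with $n \geq 5$ odd, the dual graph $G_\TT$ is a tree with $|G_\TT| = n-2 \geq 3$ and $|G_\TT|$ odd, and its maximum degree is at most three. These are exactly the hypotheses of Lemma~\ref{PentDecompGraph}, so I may apply it to produce an integer $0 \leq k \leq 5$ and a set $\EEE$ of $k$ edges of $G_\TT$ whose removal leaves component trees $G_0, G_1, \ldots, G_k$ satisfying $|G_0|=3$, all other $|G_i|$ even, and each $G_i$ (for $i \geq 1$) connected to $G_0$ by some edge $e_i \in \EEE$.

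Next I would invoke the machinery of \S\ref{Triang} to define the decomposition $\mathcal{D} = \hat{\DD}_\TT(\EEE) = \{P_0, P_1, \ldots, P_k\}$, where $P_i$ is the union of the triangles comprising the nodes of $G_i$. As observed in \S\ref{Triang}, each $P_i$ is a polygon with $|G_i| + 2$ sides, so $P_0$ has $5$ sides (a pentagon) and $P_1, \ldots, P_k$ all have an even number of sides. This gives part~(i) of the corollary. For part~(ii), again by the description of $\hat{\DD}_\TT(\EEE)$, the polygons $P_i$ and $P_j$ intersect in a diagonal of $P$ precisely when the component trees $G_i, G_j$ are joined in $G_\TT$ by an edge of $\EEE$. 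Since each $G_i$ with $i \geq 1$ is joined to $G_0$ by the edge $e_i \in \EEE$, the intersection $P_i \cap P_0$ is a diagonal of $P$ for each such $i$, as required.

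There is no serious obstacle here: the entire content of the result is in Lemma~\ref{PentDecompGraph}, and the translation from dual-graph language to polygon language is routine once the correspondence in \S\ref{Triang} is in hand. The only point that warrants a moment's care is verifying that the hypotheses of Lemma~\ref{PentDecompGraph} are genuinely met, namely that $|G_\TT|$ is odd (equivalent to $n$ odd), at least $3$ (equivalent to $n \geq 5$), and that the degree bound of three for $G_\TT$ is automatic from triangulation geometry.
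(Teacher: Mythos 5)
Your proposal is correct and follows essentially the same route as the paper: triangulate $P$, apply Lemma~\ref{PentDecompGraph} to the dual tree $G_\TT$ (which has $n-2 \geq 3$ odd nodes and degree at most three), and read off the decomposition $\hat{\DD}_\TT(\EEE)$ via the correspondence in \S\ref{Triang}. Your explicit verification of part~(ii) is slightly more careful than the paper's, which leaves that translation implicit, but the argument is identical in substance.
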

\begin{proof}
Let \(\TT\) be a triangulation of \(P\). Then \(|G_\TT| = n -2 \geq 3\), so there exists \(0 \leq k \leq 5\) and a set of \(k\) edges \(\EEE\) in \(G_\TT\) which satisfy the conditions of Lemma~\ref{PentDecompGraph}. But then the associated decomposition \(\hat{\DD}_\TT(\EEE)\) satisfies the conditions of the lemma claim, since \(P_0\) has \(|G_0| + 2 = 5\) sides, and \(P_i\) has an even number, \(|G_i| + 2\) sides, for \(1 \leq i \leq k\).
\end{proof}

Note that, as \(P\) is simple, the conditions of Lemma~\ref{PentDecomp} imply that for \(1 \leq i, j \leq k\), \(i \neq j\), the polygons \(P_i, P_j\) may meet at most in a vertex in \(P\). Thus Lemma~\ref{PentDecomp} represents a a decomposition of odd-sided polygons into a pentagonal `hub', and even-sided polygons attached along sides of the pentagon, as in Fig.\,13.

\begin{align*}
\begin{array}{cc}
\begin{tikzpicture}[scale=0.8]
\draw[very thick, black, line cap=round, fill=lightgray!10] (3,0)--(1,4)--(3,7)--(3,3)--(6,3)--(5,1)--(8,2)--(5,6)--(13,7)--(8,5)--(12.5,4.5)--(12,6)--(15,3)--(10.5,3)--(10,2)--(13,1)--(10,0)--(3,0);
\fill[ line cap=round, fill=lightgray!50!] (5,1)--(8,2)--(8,5)--(10,2)--(13,1)--(5,1);
\draw[very thick, black, line cap=round] (3,0)--(1,4)--(3,7)--(3,3)--(6,3)--(5,1)--(8,2)--(5,6)--(13,7)--(8,5)--(12.5,4.5)--(12,6)--(15,3)--(10.5,3)--(10,2)--(13,1)--(10,0)--(3,0);
%
%
%
%
\draw[very thick, cyan, dotted, line cap=round] (5,1)--(13,1);
\draw[very thick, cyan, dotted, line cap=round] (8,2)--(8,5);
\draw[very thick, cyan, dotted, line cap=round] (10,2)--(8,5);
%
%
%
%
\node[below] at (8.75,2.8) {$P_0$};
\node[above] at (9.7,3.4) {$P_1$};
\node[above] at (6.9,4.6) {$P_3$};
\node[] at (3,2) {$P_2$};
\end{tikzpicture}
\\
\scriptstyle{\textup{{ Fig.\,13: Decomposition of \(P\) into pentagonal hub \(P_0\) and even-sided polygons \(P_1, P_2, P_3\)}}}
\end{array}
\end{align*}

\section{Guards}\label{guardsec}
A {\em half-guard} in a polygon \(P\) is a pair \({\tt g} = (x_{\tt g},H_{\tt g})\), where \(x_{\tt g}\) is a point in \(P\), and \(H_{\tt g}\) is a closed half-plane such that \(x_{\tt g}\) lies on the boundary \(\partial H_{\tt g}\) of \(H_{\tt g}\). We say \({\tt g}\) is a {\em boundary half-guard for \(P\)} if \(x_{\tt g} \in \partial P\).
If \(y\) is another point in \(P\), we say that \({\tt g}\) {\em sees} \(y\) if the line segment \(\overline{x_{\tt g}y}\) is contained in \(P \cap H_{\tt g}\). 
If \({\tt G}\) is a collection of half-guards, we say that \({\tt G}\) {\em monitors} \(P\) provided that, for all \(y \in P\), there exists \({\tt g} \in {\tt G}\) such that \({\tt g}\) sees \(y\).

We will depict half-guards \({\tt g}\) in diagrams by placing a dot at \(x_{\tt g}\), and indicate the associated half-plane \(H_{\tt g}\) with a semicircle centered at \(x_{\tt g}\) contained in \(H_{\tt g}\), as shown in Fig.\,14.
\begin{align*}
\begin{array}{c}
\begin{tikzpicture}[scale=0.8]
\draw[very thick, black, line cap=round, fill=gray!50] (0,0)--(-1,1.5)--(1,3)--(1.5,2)--(2.3,3)--(5.7,3)--(5.35,2.4)--(5.9,1.5)--(6.5,2.7)--(7.5,1.8)--(8,3)--(11,1)--(11.5,1.5)--(11.5,2.2)--(10.5,1.7)-- (9,3)--(12,2.7)--(12.5,1)--(13.5,0.5)--(12,0.5)--(11,0)--(10,0)--(9,1.8)--(8,0)--(4.5,0.5)--(5,1.3)--(3.5,2.2)--(4,1.5)--(2.5,0)--(0,1)--(0,0);
\fill[line cap=round, fill=lightgray!10] (0,0)--(-1,1.5)--(1,3)--(1.5,2)--(3.7,2)--(4,1.5)--(2.5,0)--(0,1)--(0,0);
\fill[line cap=round, fill=lightgray!10] (2.2,2.8)--(2.3,3)--(3.5,3)--(5.9,1.46)--(6.5,2.7)--(7.525,1.74)--(8.15,2.95)--(9.4,2.05)--(9,1.84)--(8.8,1.45)--(5.85,0.33)--(4.5,0.5)--(5,1.3)--(3.5,2.23)--(2.2,2.8);
\fill[line cap=round, fill=lightgray!10] (9,3)--(12,2.7)--(12.65,0.5)--(12,0.5)--(11,0)--(10.9,0)--(11.5,1.5)--(11.5,2.2)--(10.5,1.7)--(9,3);
%
%
%
%
\draw[very thick, white, dotted, line cap=round] (-0.35,2)--(3.7,2);
\draw[very thick, white, dotted, line cap=round] (7,0.75)--(2.2,2.8);
\draw[very thick, white, dotted, line cap=round] (7,0.75)--(3.5,3);
\draw[very thick, white, dotted, line cap=round] (7,0.75)--(9.4,2.05);
\draw[very thick, white, dotted, line cap=round] (7,0.75)--(8.15,2.95);
\draw[very thick, white, dotted, line cap=round] (12,2.7)--(12.65,0.5);
\draw[very thick, white, dotted, line cap=round] (12,2.7)--(10.9,0);
%
%
%
%
\pgfmathsetmacro{\ex}{7}
\pgfmathsetmacro{\ey}{0.75}
\pgfmathsetmacro{\rotter}{21}
\fill[fill = red, opacity = 0.4] (\ex,\ey) ++(\rotter:.5) arc (\rotter:\rotter+180:.5);
\draw[very thick, white, dotted, line cap=round] (5.85,0.33)--(8.8,1.45);
\draw[thick, fill=red]  (\ex,\ey) circle (0.15);
\draw[very thick, black, line cap=round] (0,0)--(-1,1.5)--(1,3)--(1.5,2)--(2.3,3)--(5.7,3)--(5.35,2.4)--(5.9,1.5)--(6.5,2.7)--(7.5,1.8)--(8,3)--(11,1)--(11.5,1.5)--(11.5,2.2)--(10.5,1.7)-- (9,3)--(12,2.7)--(12.5,1)--(13.5,0.5)--(12,0.5)--(11,0)--(10,0)--(9,1.8)--(8,0)--(4.5,0.5)--(5,1.3)--(3.5,2.2)--(4,1.5)--(2.5,0)--(0,1)--(0,0);
\pgfmathsetmacro{\ex}{-0.35}
\pgfmathsetmacro{\ey}{2}
\pgfmathsetmacro{\rotter}{217}
\fill[fill = red, opacity = 0.4] (\ex,\ey) ++(\rotter:.5) arc (\rotter:\rotter+180:.5);
\draw[thick, fill=red]  (\ex,\ey) circle (0.15);
\pgfmathsetmacro{\ex}{12}
\pgfmathsetmacro{\ey}{2.7}
\pgfmathsetmacro{\rotter}{150}
\fill[fill = red, opacity = 0.4] (\ex,\ey) ++(\rotter:.5) arc (\rotter:\rotter+180:.5);
\draw[thick, fill=red]  (\ex,\ey) circle (0.15);
%
%
%
%
\end{tikzpicture}
\\
\scriptstyle{\textup{{ Fig.\,14: Half-guards in a polygon \(P\). The regions of \(P\) seen by each guard are illuminated.}}}
\end{array}
\end{align*}

\subsection{Cooperative half-guards}
 If \({\tt g}, {\tt h}\) are half-guards, we will abuse notation slightly and write \({\tt g}\) {\em sees} \({\tt h}\) to mean that \({\tt g}\) sees \(x_{{\tt h}}\). 
For a collection \({\tt G}\) of half-guards, the {\em mutual visibility graph} \(V_{\tt G}\) is the graph with nodes \({\tt G}\), with an edge between nodes \({\tt g}\) and \({\tt h}\) if and only if \({\tt g}\) and \({\tt h}\) see each other.

\begin{Definition}
Let \({\tt G}\) be a collection of half-guards in a polygon \(P\). We say that \({\tt G}\) is a {\em  cooperative half-guard (CHG) set for \(P\)} provided that:
\begin{enumerate}
\item \({\tt G}\) monitors \(P\).
\item \(V_{\tt G}\) is connected.
\end{enumerate}
In other words, every point in \(P\) must be seen by some \({\tt g} \in {\tt G}\), and for every pair \({\tt g}, {\tt h}\) of half-guards in \({\tt G}\), there exists a sequence of half-guards \({\tt g} = {\tt g}_0,\, {\tt g}_1, \ldots, {\tt g}_k = {\tt h}\) in \({\tt G}\) such that \({\tt g}_{i-1}\) and \({\tt g}_{i}\) see each other, for all \(1 \leq i \leq k\).
\end{Definition}

\begin{Definition}\label{schgdef}\(\)
\begin{enumerate}
\item For a polygon \(P\), define \(\textup{cg}(P, 180^\circ)\) to be the minimum number of half-guards required to form a cooperative half-guard set for \(P\), i.e.:
\begin{align*}
\textup{cg}(P, 180^\circ) := \min\{ |{\tt G}| \mid \textup{ \({\tt G}\) is a CHG set for \(P\)}\}
\end{align*}
\item For \(n \geq 3\), define \(\textup{cg}(n, 180^\circ)\) to be the maximum number of half-guards required to form a cooperative half-guard set for any polygon with \(n\) sides, i.e.:
\begin{align*}
\textup{cg}(n, 180^\circ) := \max\{ \textup{cg}(P, 180^\circ) \mid \textup{\(P\) is an \(n\)-sided polygon}\}
\end{align*}
\item For even \(n \geq 4\), define \(\textup{cg}^\perp(n, 180^\circ)\) to be the maximum number of half-guards required to form a cooperative half-guard set for any orthogonal polygon with \(n\) sides, i.e.:
\begin{align*}
\textup{cg}^\perp(n, 180^\circ) := \max\{ \textup{cg}(P, 180^\circ) \mid \textup{\(P\) is an \(n\)-sided orthogonal polygon}\}
\end{align*}
\end{enumerate}
\end{Definition}

For arbitrary \(P\), it is a difficult computational problem (NP-hard, as shown in \cite{LHL}) to work out the value of \(\textup{cg}(P, 180^\circ)\). Our goal in this paper is to give an explicit description of the values \(\textup{cg}(n,180^\circ)\) and \(\textup{cg}^\perp(n,180^\circ)\) for all applicable \(n\).

\subsection{Full-guards}
One can similarly define cooperative {\em full-guard} (CFG) sets for polygons, where full guards are assumed to see points in any direction, not just within an associated half-plane. We use the notation \(\textup{cg}(n, 360^\circ)\), etc., to describe values analogous to Definition~\ref{schgdef} for CFG sets. The following theorem was proved by Hern\'andez-Pe\~nalver \cite{HP}. 

\begin{Theorem}\label{HPthm}\(\)
\begin{enumerate}
\item[\textup{(i)}]
For all \(n \geq 4\) we have
\(
\textup{cg}(n,360^\circ) = \lfloor n/2 \rfloor -1
\).
\item[\textup{(ii)}]
For all even \(n \geq 6\) we have
\(
\textup{cg}^\perp(n,360^\circ) = n/2 -2.
\)
\end{enumerate}
\end{Theorem}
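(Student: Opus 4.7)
The plan is to prove both parts by strong induction on $n$, making heavy use of the decomposition lemmas of \S\ref{prelimsec}. For the upper bound of (i), I would first dispatch the base cases $n \in \{3,4,5\}$ directly: a triangle or convex quadrilateral is monitored by any single full-guard, a non-convex quadrilateral is bisected by its unique diagonal into two triangles jointly monitored by a guard at the reflex vertex, and any pentagon is monitored by a single full-guard via Chv\'atal's bound $\lfloor 5/3 \rfloor = 1$.

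For the inductive step with $n \geq 6$, I would invoke Corollary~\ref{QuadPent} to extract a diagonal $d$ cutting $P$ into a quadrilateral or pentagon $P_0$ and a residual polygon $P_1$ with $n_1 = n-2$ or $n-3$ sides. One full-guard is then placed at an endpoint $v$ of $d$ to monitor all of $P_0$, while the inductive hypothesis supplies a cooperative full-guard set of size $\lfloor n_1/2 \rfloor - 1$ for $P_1$. Cooperativity across $d$ is straightforward in the full-guard setting because $v \in \partial P_1$ and is automatically visible from any $P_1$-guard monitoring the region of $P_1$ touching $v$. A short case analysis shows the total $1 + (\lfloor n_1/2\rfloor - 1)$ equals $\lfloor n/2 \rfloor - 1$ in the quadrilateral subcase, and is at most this value in the pentagon subcase.

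The lower bound in (i) is produced by the classical family of \emph{comb} polygons: a spine with $\lfloor n/2 \rfloor - 1$ narrow prongs, each containing a point visible only from inside the prong, forcing one guard per prong. For (ii), the argument is structurally identical but uses an orthogonal analogue of Corollary~\ref{QuadPent}: any orthogonal polygon with $n \geq 8$ sides admits an axis-parallel cut producing an orthogonal sub-polygon of 4 or 6 sides (rectangle, L-shape, or T-shape) monitored by a single full-guard, together with a residual orthogonal polygon. The improved bound $n/2 - 2$ arises because the base case $n = 6$ (an L-shape) is monitored by one guard---one fewer than $\lfloor 6/2 \rfloor - 1$ would predict---and the saving is carried through the induction. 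The orthogonal lower bound is witnessed by an orthogonal comb construction with $n/2 - 2$ rectangular prongs.

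The main obstacle will be propagating cooperativity through the inductive step while simultaneously ensuring that the single guard chosen for a pentagonal $P_0$ can always be placed at an endpoint of the cut $d$ supplied by Corollary~\ref{QuadPent}. Chv\'atal's bound guarantees existence but not the location of the monitoring guard; this forces a refined case analysis on the reflex-vertex pattern of an arbitrary pentagon (which has at most two reflex vertices), together with a strengthened inductive invariant requiring the CFG set for $P_1$ to contain a guard positioned to see the designated endpoint of $d$. Maintaining this invariant across successive inductive steps---and showing that the cut produced by Corollary~\ref{QuadPent} can be chosen compatibly with the previously-marked vertex---constitutes the real technical content of the argument.
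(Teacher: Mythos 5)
The first thing to say is that the paper does not prove Theorem~\ref{HPthm} at all: it is imported verbatim from Hern\'andez-Pe\~nalver \cite{HP} and used only as the source of the lower bounds in Corollaries~\ref{MainCor} and~\ref{MainCorOrth}. So there is no internal proof to compare yours against; what follows assesses your sketch on its own terms, using the paper's proof of the half-guard analogue (Theorem~\ref{MainThm}) as the natural reference point.

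Your inductive step has a genuine gap which you flag but do not close, and your proposed fix aims at the wrong side of the cut. The difficulty is not whether some guard of the set for \(P_1\) sees the designated endpoint of \(d\) --- for full guards that is automatic, since the set monitors \(P_1\) and mutual visibility is symmetric. The difficulty is that a pentagon need not be monitorable from any point of a \emph{prescribed} side: a pentagon with two reflex vertices is star-shaped, but its kernel can meet \(\partial P_0\) only along one side (as in Fig.~24, where the kernel is a small region meeting the boundary near \(b\in\overline{xy}\)), so if Corollary~\ref{QuadPent} hands you a diagonal \(d\) equal to one of the other four sides, no guard at an endpoint of \(d\), nor anywhere on \(d\), sees all of \(P_0\); and a guard placed elsewhere in the kernel is not automatically visible from \(P_1\)'s guards. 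No strengthened invariant on the guard set for \(P_1\) repairs this, and Corollary~\ref{QuadPent} gives you no control over which side of \(P_0\) the diagonal is. This is precisely why Theorem~\ref{MainThm} does not recurse by lopping off a small piece: in the odd case it uses Corollary~\ref{PentDecomp} to make the pentagon a \emph{hub}, fixes its single guard first via Lemma~\ref{Pent}, and builds the attached even-sided pieces' aligned guard sets to connect to that guard; in the even case it cuts from a reflex vertex and does parity bookkeeping with \(s\)-aligned sets. The same kernel-location problem recurs in your orthogonal step (the kernel of a long-armed L-hexagon misses the end-cap sides), and your structural claim that every orthogonal polygon with \(n\geq 8\) sides admits a cut leaving a \(4\)- or \(6\)-sided piece is asserted, not proved. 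Finally, the lower bound cannot be witnessed by the comb as you describe it: a comb forcing one guard per prong costs roughly three edges per prong and therefore forces only about \(n/3\) guards; reaching \(\lfloor n/2\rfloor-1\) requires a construction in which the \emph{connectivity} of the mutual visibility graph forces the additional guards, which your sketch never invokes.
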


It is clear from definitions that we must have \(\textup{cg}(P, 180^\circ) \geq \textup{cg}(P, 360^\circ)\) and \(\textup{cg}^\perp(P, 180^\circ) \geq \textup{cg}^\perp(P, 360^\circ)\), since if \({\tt G}\) is a CHG set for \(P\), then replacing every half-guard in \({\tt G}\) with a full-guard yields an CFG set for \(P\). These are not equalities in general; Figs.\,1 and 2 show an example of a polygon \(P\) such that \(\textup{cg}(P,360^\circ) = 1\) and \(\textup{cg}(P,180^\circ) = 2\).
While it is not true that \(\textup{cg}(P,180^\circ) = \textup{cg}(P,360^\circ)\) in general, we will demonstrate in Corollaries~\ref{MainCor} and~\ref{MainCorOrth} that we nonetheless have \(\textup{cg}(n,180^\circ) = \textup{cg}(n,360^\circ)\) and \(\textup{cg}^\perp(n,180^\circ) = \textup{cg}^\perp(n,360^\circ)\) for all applicable \(n\).

\subsection{Special half-guards}
We now introduce some terminology used in this paper for special classes of half-guards.

\begin{Definition}
Let \({\tt g}\) be a boundary half-guard for \(P\). Let \(u,v \neq x_{\tt g}\) be vertices of \(P\) adjacent to \(x_{\tt g}\). Let \(C\) be the cone formed by the interior angle \(\angle ux_{\tt g}v\) in \(P\). We say that \({\tt g}\) is an {\em entire} boundary half-guard for \(P\) if \(C \subseteq H_{\tt g}\).
\end{Definition}

In other words, \({\tt g}\) is an entire boundary half-guard for \(P\) if and only if \(x_{\tt g} \in \partial P\) is not a reflex vertex and \(H_{\tt g}\) is oriented in such a way that maximizes the region in \(P\) seen by \({\tt g}\). Some examples of entire and non-entire boundary half-guards are shown in Figs.\,15 and 16.
\begin{align*}
\begin{array}{ccccc}
\begin{tikzpicture}[scale=0.8]
\pgfmathsetmacro{\ex}{0.5}
\pgfmathsetmacro{\ey}{0}
\fill[fill = white, opacity = 0.4] (\ex,\ey) ++(30:.5) arc (30:210:.5);
%
\draw[very thick, black, line cap=round, fill=lightgray!10] (-1,0)--(2,3)--(1.5,1)--(3,0)--(-1,0);
%
\pgfmathsetmacro{\ex}{0.5}
\pgfmathsetmacro{\ey}{0}
\fill[fill = red, opacity = 0.4] (\ex,\ey) ++(0:.5) arc (0:180:.5);
\draw[thick, fill=red]  (\ex, \ey) circle (0.15);
\pgfmathsetmacro{\ex}{2}
\pgfmathsetmacro{\ey}{3}
\fill[fill = red, opacity = 0.4] (\ex,\ey) ++(160:.5) arc (160:340:.5);
\draw[thick, fill=red]  (\ex, \ey) circle (0.15);
\end{tikzpicture}
&
&
\begin{tikzpicture}[scale=0.8]
\draw[very thick, black, line cap=round, fill=lightgray!10] (-1,0)--(2,3)--(1.5,1)--(3,0)--(-1,0);
%
\pgfmathsetmacro{\ex}{0.5}
\pgfmathsetmacro{\ey}{0}
\fill[fill = red, opacity = 0.4] (\ex,\ey) ++(30:.5) arc (30:210:.5);
\draw[thick, fill=red]  (\ex, \ey) circle (0.15);
\pgfmathsetmacro{\ex}{1.5}
\pgfmathsetmacro{\ey}{1}
\fill[fill = red, opacity = 0.4] (\ex,\ey) ++(100:.5) arc (100:280:.5);
\draw[thick, fill=red]  (\ex, \ey) circle (0.15);
\end{tikzpicture}
\\
\scriptstyle{\textup{{ Fig.\,15: Entire boundary half-guards}}} & & \scriptstyle{\textup{{ Fig.\,16: Non-entire boundary half-guards}}}
\end{array}
\end{align*}

The following lemma is clear:
\begin{Lemma}\label{BigToSmall}
Let \({\tt g}\) be an entire boundary half-guard in a polygon \(P\). Let \(Q \subseteq P\) be a polygon which contains \(x_{\tt g}\). Then \({\tt g}\) is an entire boundary half-guard in \(Q\).
\end{Lemma}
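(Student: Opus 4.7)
The plan is to show that the interior-angle cone of $Q$ at $x_{\tt g}$ sits inside the interior-angle cone of $P$ at $x_{\tt g}$, from which the conclusion falls out of the definition of \emph{entire}.

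First I would verify that $x_{\tt g}$ is still a vertex of $Q$ and lies on $\partial Q$. Since $\tt g$ is entire in $P$ there exist vertices $u,v$ of $P$ adjacent to $x_{\tt g}$, so $x_{\tt g}$ is a vertex of $P$; and because $x_{\tt g} \in Q \subseteq P$ together with $x_{\tt g} \in \partial P$ forces $x_{\tt g} \in Q \cap \partial P \subseteq \partial Q$. In the context of this paper, every subpolygon $Q \subseteq P$ under consideration arises from the cut/decomposition operations of \S\ref{prelimsec}, for which any vertex of $P$ lying on $\partial Q$ is automatically a vertex of $Q$. Let $u',v'$ be the vertices of $Q$ adjacent to $x_{\tt g}$, and let $C'$ denote the cone formed by the interior angle $\angle u'x_{\tt g}v'$ in $Q$.

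Next I would establish the key geometric inclusion $C' \subseteq C$, where $C$ is the interior-angle cone of $P$ at $x_{\tt g}$. Locally near $x_{\tt g}$, the interior of $Q$ is contained in the interior of $P$, so any ray based at $x_{\tt g}$ that immediately enters the interior of $Q$ must also immediately enter the interior of $P$. The set of such rays is precisely $C'$ for $Q$ and $C$ for $P$, giving $C' \subseteq C$.

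Finally, since $\tt g$ is entire in $P$ we have $C \subseteq H_{\tt g}$ by definition, so combining with the previous step yields $C' \subseteq C \subseteq H_{\tt g}$, which is exactly the condition that $\tt g$ is an entire boundary half-guard of $Q$. I do not expect any serious obstacle: this lemma is essentially the monotonicity of the interior-angle cone at a shared vertex under polygon containment, which is why the authors flag it as clear. The only subtlety worth being careful about is the verification that $x_{\tt g}$ remains a vertex of $Q$, which as noted is immediate in the decomposition setting used throughout the paper.
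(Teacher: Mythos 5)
The paper offers no proof of this lemma (it is stated as ``clear''), so the only thing to measure your argument against is the definition itself; your cone-monotonicity idea is indeed the intended justification, and the chain \(C' \subseteq C \subseteq H_{\tt g}\) is the whole content. However, your opening claim --- that the existence of vertices \(u,v\) of \(P\) adjacent to \(x_{\tt g}\) forces \(x_{\tt g}\) to be a vertex of \(P\) --- is wrong, and it matters for how the lemma is used. Entire boundary half-guards in this paper are routinely placed at non-vertex boundary points: the statement of Lemma~\ref{Pent} explicitly produces an entire boundary half-guard with \(x_{\tt g}\) not a vertex, and Lemmas~\ref{Quad} and~\ref{Pent} (as well as the proofs of Theorems~\ref{MainThm} and~\ref{MainThmOrth}) apply Lemma~\ref{BigToSmall} to a guard placed at a cut point \(b\) lying in the relative interior of a side. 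In the definition of ``entire,'' when \(x_{\tt g}\) is not a vertex, \(u\) and \(v\) are to be read as the nearest vertices of \(P\) along \(\partial P\) on either side of \(x_{\tt g}\) (the endpoints of the side containing it), so that \(C\) is the closed half-plane bounded by the line through that side; Figs.~15--17 confirm this reading. As written, your proof excludes precisely the case in which the lemma is most often invoked, and the appeal to ``subpolygons arising from the decomposition machinery'' is an unnecessary weakening of a lemma stated for arbitrary \(Q \subseteq P\).

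The repair costs nothing, because your second and third paragraphs never actually use the vertex hypothesis. For any \(x_{\tt g} \in \partial P \cap Q\) one has \(x_{\tt g} \in \partial Q\) (as you argue: the interior of \(Q\) is contained in the interior of \(P\), so a point of \(\partial P\) cannot be interior to \(Q\)), and for sufficiently small \(\varepsilon\) the sets \(P \cap B_\varepsilon(x_{\tt g})\) and \(Q \cap B_\varepsilon(x_{\tt g})\) coincide with \(C \cap B_\varepsilon(x_{\tt g})\) and \(C' \cap B_\varepsilon(x_{\tt g})\) respectively, where \(C\) and \(C'\) are the interior-angle cones (a closed half-plane in the degenerate, non-vertex case). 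Then \(Q \subseteq P\) gives \(C' \subseteq C\), hence \(C' \subseteq H_{\tt g}\) since \({\tt g}\) is entire in \(P\). Deleting the first paragraph's vertex discussion and running this argument uniformly leaves a complete and correct proof.
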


\begin{Definition}
Let \(t \subset \partial P\) be a line segment. Let \({\tt g}\) be a half-guard in \(P\) such that \({\tt g}\) is colinear to \(t\), sees every point in \(t\), and \(\partial H_{\tt g}\) is colinear to \(t\) if \(x_{\tt g} \in t\). Then we say that \({\tt g}\) is {\em \(t\)-aligned}. If \({\tt G}\) is a CHG set for \(P\) that contains a \(t\)-aligned half-guard \({\tt g}\), we say that \({\tt G}\) is {\em \(t\)-aligned}.
\end{Definition}

We will use \(t\)-aligned half-guards as a tool for merging CHG sets for polygons in partitions of \(P\) into a CHG set for \(P\). Some examples of \(t\)-aligned and non-\(t\)-aligned half-guards are shown in Figs.\,17 and 18.
\begin{align*}
\begin{array}{ccccc}
\begin{tikzpicture}[scale=0.8]
\pgfmathsetmacro{\ex}{1.35}
\pgfmathsetmacro{\ey}{0.35}
\fill[fill = white, opacity = 0.4] (\ex,\ey) ++(120:.5) arc (120:300:.5);
\draw[very thick, black, line cap=round, fill=lightgray!10] (-1,0)--(2,3)--(1.5,1)--(3,0)--(-1,0);
\draw[very thick, cyan, dotted, line cap=round] (1.5,1)--(1.25,0);
\draw[line width=3pt, line cap=round, black] (2,3)--(1.5,1);
\pgfmathsetmacro{\ex}{2}
\pgfmathsetmacro{\ey}{3}
\fill[fill = red, opacity = 0.4] (\ex,\ey) ++(75:.5) arc (75:255:.5);
\draw[thick, fill=red]  (\ex, \ey) circle (0.15);
\pgfmathsetmacro{\ex}{1.35}
\pgfmathsetmacro{\ey}{0.35}
\fill[fill = red, opacity = 0.4] (\ex,\ey) ++(20:.5) arc (20:200:.5);
\draw[thick, fill=red]  (\ex, \ey) circle (0.15);
\pgfmathsetmacro{\ex}{1.68}
\pgfmathsetmacro{\ey}{1.75}
\fill[fill = red, opacity = 0.4] (\ex,\ey) ++(75:.5) arc (75:255:.5);
\draw[thick, fill=red]  (\ex, \ey) circle (0.15);
\node[] at (2.2,2.2) {$t$};
\end{tikzpicture}
&
&
\begin{tikzpicture}[scale=0.8]
\draw[very thick, black, line cap=round, fill=lightgray!10] (-1,0)--(2,3)--(1.5,1)--(3,0)--(-1,0);
\draw[very thick, cyan, dotted, line cap=round] (1.5,1)--(1.25,0);
\draw[line width=3pt, line cap=round, black] (2,3)--(1.5,1);
\pgfmathsetmacro{\ex}{2}
\pgfmathsetmacro{\ey}{3}
\fill[fill = red, opacity = 0.4] (\ex,\ey) ++(160:.5) arc (160:340:.5);
\draw[thick, fill=red]  (\ex, \ey) circle (0.15);
\pgfmathsetmacro{\ex}{1.35}
\pgfmathsetmacro{\ey}{0.35}
\fill[fill = red, opacity = 0.4] (\ex,\ey) ++(120:.5) arc (120:300:.5);
\draw[thick, fill=red]  (\ex, \ey) circle (0.15);
\pgfmathsetmacro{\ex}{1}
\pgfmathsetmacro{\ey}{2}
\fill[fill = red, opacity = 0.4] (\ex,\ey) ++(225:.5) arc (225:405:.5);
\draw[thick, fill=red]  (\ex, \ey) circle (0.15);
%
\node[] at (2.2,2.2) {$t$};
\end{tikzpicture}
\\
\scriptstyle{\textup{{ Fig.\,17: \(t\)-aligned half-guards}}} & & \scriptstyle{\textup{{ Fig.\,18: Non-\(t\)-aligned half-guards}}}
\end{array}
\end{align*}

\section{Triangles, quadrilaterals and pentagons}\label{TriQuadPent}

In this section we show that polygons with \(n=3,4,5\) sides can be monitored by one half-guard which satisfies certain additional conditions. The lemmas in this section will play a role in constructing CHG sets for polygons of arbitrary size.

\begin{Lemma}\label{Tri}
Let \(P\) be a convex polygon and \({\tt g}\) be an entire boundary half-guard in \(P\). Then \(\{{\tt g}\}\) monitors \(P\). In particular, a triangle is monitored by any entire boundary half-guard.
\end{Lemma}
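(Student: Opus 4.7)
The plan is to show that for any $y \in P$, the half-guard ${\tt g}$ sees $y$, i.e.\ the segment $\overline{x_{\tt g} y}$ lies in $P \cap H_{\tt g}$. The first containment, $\overline{x_{\tt g} y} \subseteq P$, is immediate from convexity of $P$, since both endpoints lie in $P$. So the real task is to verify $\overline{x_{\tt g} y} \subseteq H_{\tt g}$. As $H_{\tt g}$ is itself convex and $x_{\tt g} \in \partial H_{\tt g} \subseteq H_{\tt g}$, this reduces to showing that $y \in H_{\tt g}$.

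To do this I would argue that the entire polygon $P$ is contained in the interior-angle cone $C$ at $x_{\tt g}$. Let $u,v$ be the two vertices of $P$ adjacent to $x_{\tt g}$ (the endpoints of the edge through $x_{\tt g}$ if $x_{\tt g}$ is not itself a vertex). By convexity of $P$ together with $x_{\tt g} \in \partial P$, the polygon $P$ lies entirely on the interior side of the supporting line along each edge of $P$ through $x_{\tt g}$; the intersection of these two closed half-planes is precisely $C$. Hence $P \subseteq C$, and in particular $y \in C$. Since ${\tt g}$ is entire, $C \subseteq H_{\tt g}$, so $y \in H_{\tt g}$, as desired.

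Combining the two containments, $\overline{x_{\tt g} y} \subseteq P \cap H_{\tt g}$, so ${\tt g}$ sees $y$. Since $y$ was arbitrary, $\{{\tt g}\}$ monitors $P$. The ``in particular'' clause follows because every triangle is convex.

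I do not anticipate any serious obstacle; the only point requiring care is the verification $P \subseteq C$, which is a standard consequence of convexity (each edge adjacent to $x_{\tt g}$ is a supporting line of $P$), but is worth stating explicitly so that the application of the ``entire'' hypothesis $C \subseteq H_{\tt g}$ is clean.
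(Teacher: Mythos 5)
Your proposal is correct and follows essentially the same route as the paper: both arguments use convexity to get $\overline{x_{\tt g}y}\subseteq P$, and both show $P\subseteq H_{\tt g}$ by observing that $P$ is contained in the intersection of the supporting half-planes along the edge(s) through $x_{\tt g}$ (your cone $C$), which lies in $H_{\tt g}$ by the entire-guard hypothesis. No gaps.
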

\begin{proof}
As \(P\) is convex, for each side \(s\) of \(P\), there exists a closed half-plane \(H_s\) with \(\partial H_s\) colinear to \(s\) such that \(P = \bigcap_{s} H_s\). If \(x_{\tt g}\) is on the relative interior of a side \(s\), then \(H_s = H_{\tt g}\), and if \(x_{\tt g}\) is at a vertex where sides \(s,t\) meet, then we have \(H_s \cap H_t \subset H_{\tt g}\) by the definition of entire boundary half-guards. Thus \(P \subseteq H_{\tt g}\). Moreover, by convexity of \(P\), we have \(\overline{x_{\tt g} p} \subset P\) for all \(p \in P\), so the result follows.
\end{proof}

\begin{Lemma}\label{MiniQuad}
Let \(Q\) be a quadrilateral, \(v\) be a convex vertex of \(Q\), and assume that both vertices adjacent to \(v\) are convex as well. If \({\tt g}\) is an entire boundary half-guard at \(v\), then \(\{{\tt g}\}\) monitors \(Q\).
\end{Lemma}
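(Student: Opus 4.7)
The plan is to split into cases based on whether $Q$ is convex or not, using the hypothesis that three of the four vertices are convex.

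First, observe that since the vertex $v$ and both its neighbors are convex, the only vertex of $Q$ which could be reflex is the fourth vertex $w$, namely the one not adjacent to $v$. In the first case, $Q$ is convex, and then Lemma~\ref{Tri} directly gives that $\{\mathtt{g}\}$ monitors $Q$.

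For the second case, $w$ is reflex. I would then invoke the standard fact that in a simple quadrilateral, the diagonal through the reflex vertex lies in the interior, so that $\overline{vw}$ is a cut which decomposes $Q$ into two triangles $T_1, T_2$ sharing the edge $\overline{vw}$. At $v$, the interior cones of $T_1$ and $T_2$ partition the interior cone of $Q$ along the diagonal $\overline{vw}$. Hence each $T_i$ is a polygon contained in $Q$ with $x_{\mathtt{g}} = v$ as a vertex, and since $\mathtt{g}$ is entire in $Q$, Lemma~\ref{BigToSmall} implies $\mathtt{g}$ is an entire boundary half-guard in each $T_i$. Now Lemma~\ref{Tri} (applied to the convex polygon $T_i$) gives that $\{\mathtt{g}\}$ monitors $T_i$, and since $Q = T_1 \cup T_2$, $\{\mathtt{g}\}$ monitors $Q$.

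The only real content is the geometric claim that the diagonal $\overline{vw}$ is interior to $Q$ when $w$ is reflex; this is standard, but is the step that needs explicit justification, since the other diagonal $\overline{u_1 u_2}$ (where $u_1,u_2$ are the neighbors of $v$) would fail to lie in $Q$ in precisely this case. Once that is in hand, Lemmas~\ref{Tri} and~\ref{BigToSmall} do all the work, so no further calculation is required.
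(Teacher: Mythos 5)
Your proposal is correct and follows essentially the same route as the paper: reduce to the case where the unique possibly-reflex vertex is the one opposite $v$, split $Q$ along the diagonal through that reflex vertex into two triangles, and apply Lemma~\ref{BigToSmall} followed by Lemma~\ref{Tri} to each. The paper justifies the interior diagonal by noting that any triangulation of $Q$ has size two and the reflex vertex must lie in both triangles, which is just a different phrasing of the standard fact you cite.
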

\begin{proof}
If all vertices of \(Q\) are convex, then \(Q\) is convex and thus the statement follows by Lemma~\ref{Tri}. Thus we may assume that \(Q = [u,v,w,x]\), where \(x\) is a reflex vertex in \(Q\). There exists a triangulation of \(Q\) of size two, and since \(x\) is reflex, it must belong to both triangles in the triangulation. Thus \(\TT = \{[v,w,x], [v,u,x]\}\). Note then that \({\tt g}\) is an entire boundary half-guard in both triangles in \(\TT\) by Lemma~\ref{BigToSmall}, so \(\{{\tt g}\}\) monitors each by Lemma~\ref{Tri}. Therefore \(\{{\tt g}\}\) monitors \(Q\).
\end{proof}

\begin{Lemma}\label{Quad}
Let \(Q\) be a quadrilateral and \(s\) be any side in \(Q\). There exists an \(s\)-aligned entire boundary half-guard \({\tt g}\) such that \(\{{ \tt g}\}\) monitors \(Q\).
\end{Lemma}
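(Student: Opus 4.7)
The plan is to case-split on the number of reflex vertices of $Q$ (at most one, by the angle sum being $360^\circ$), and in the reflex case further on whether $s$ is incident to the reflex vertex.

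If $Q$ is convex, I would place ${\tt g}$ at an endpoint $v$ of $s$, with $\partial H_{\tt g}$ the line through $s$ and $H_{\tt g}$ the half-plane on the interior side of $Q$. Convexity of $v$ places the interior-angle cone $\angle uvw$ (with $u,w$ the neighbors of $v$) inside $H_{\tt g}$, so ${\tt g}$ is entire and $s$-aligned, and Lemma~\ref{Tri} immediately gives that $\{{\tt g}\}$ monitors $Q$.

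Next, suppose $Q=[u,v,w,x]$ with $x$ reflex and $u,v,w$ convex. If $s\in\{\overline{uv},\overline{vw}\}$, then $v$ is an endpoint of $s$ whose two neighbors in $Q$ are both convex; placing ${\tt g}$ at $v$ with $\partial H_{\tt g}$ along $s$ and $H_{\tt g}$ on the interior side produces an $s$-aligned entire boundary half-guard, and Lemma~\ref{MiniQuad} gives that $\{{\tt g}\}$ monitors $Q$.

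The substantive case is $s=\overline{wx}$ (the case $s=\overline{xu}$ being symmetric). Since $x$ is reflex, it lies in the interior of $\triangle uvw$, so the $\vv{wx}$-cut $\overline{xb}$—the continuation of $s$ past $x$ into $Q$—first meets $\partial Q$ at a point $b$ in the relative interior of the side $\overline{uv}$ (the degenerate collinear possibility $b=v$ can be dispatched by placing ${\tt g}$ at $v$ as in the previous paragraph, now with $\partial H_{\tt g}$ along the line through $s$). I would place ${\tt g}$ at $b$ with $H_{\tt g}$ the half-plane bounded by the line through $\overline{uv}$ on the interior side of $Q$. Because $b$ is a relative interior point of a side, the interior-angle cone at $b$ coincides with $H_{\tt g}$, so ${\tt g}$ is entire; and ${\tt g}$ is $s$-aligned because $b$ lies on the line through $s$ and sees every point of $s$ along the common line through $b,x,w$. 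The cut $\overline{bx}$, being colinear with $s$, partitions $Q$ into two triangles $[b,v,w]$ (with $x$ lying on side $\overline{bw}$) and $[b,x,u]$; Lemma~\ref{BigToSmall} ensures ${\tt g}$ is entire in each, and Lemma~\ref{Tri} gives that ${\tt g}$ monitors each, hence all of $Q$.

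The main obstacle I expect is this last case: verifying that the reflex hypothesis forces $b$ to land in $\overline{uv}$ (via $x$ being interior to $\triangle uvw$) and that the two pieces of the induced decomposition are genuine triangles with ${\tt g}$ at a vertex of each, so that the reduction to Lemma~\ref{Tri} goes through cleanly.
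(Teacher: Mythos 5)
Your proof is correct and follows essentially the same case analysis as the paper: a guard on the line of $s$ for the convex case, a guard at the vertex of $s$ not adjacent to the reflex vertex (via Lemma~\ref{MiniQuad}) when both endpoints of $s$ are convex, and a guard at the point $b$ where the ray $\vv{wx}$ through the reflex endpoint exits $Q$, splitting $Q$ into two triangles monitored via Lemmas~\ref{BigToSmall} and~\ref{Tri}, otherwise. The only superfluous step is your degenerate sub-case $b=v$, which cannot actually occur: it would place the vertex $x$ in the relative interior of the side $\overline{vw}$, contradicting simplicity of $Q$.
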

\begin{proof}
We describe the guard \({\tt g}\) in three different cases. First, assume \(Q\) is convex, as in Fig.\,19. Then we may place an \(s\)-aligned entire boundary half-guard \({\tt g}\) at any point in the relative interior of \(s\). Then \(\{{\tt g}\}\) will be \(s\)-aligned and monitor \(Q\) by Lemma~\ref{Tri}.
\begin{align*}
\begin{array}{ccccc}
\begin{tikzpicture}[scale=0.8]
\draw[very thick, black, line cap=round, fill=lightgray!10] (0,0)--(2,3)--(4,1)--(3,0)--(0,0);
\draw[line width=3pt, black, line cap=round] (3,0)--(0,0);
\pgfmathsetmacro{\ex}{1.9}
\pgfmathsetmacro{\ey}{0}
\fill[fill = red, opacity = 0.4] (\ex,\ey) ++(0:.5) arc (0:180:.5);
\draw[thick, fill=red]  (1.9,0) circle (0.15);
%
\node[] at (1.9,-0.5) {$b$};
\node[] at (0.8,-0.4) {$s$};
\end{tikzpicture}
&
&
\begin{tikzpicture}[scale=0.8]
\draw[very thick, black, line cap=round, fill=lightgray!10] (0,0)--(2,3)--(1.5,1)--(3,0)--(0,0);
\draw[line width=3pt, line cap=round, black] (2,3)--(1.5,1);
\draw[very thick, cyan, dotted, line cap=round] (1.5,1)--(1.25,0);
\pgfmathsetmacro{\ex}{1.25}
\pgfmathsetmacro{\ey}{0}
\fill[fill = red, opacity = 0.4] (\ex,\ey) ++(0:.5) arc (0:180:.5);
\draw[thick, fill=red]  (1.25,0) circle (0.15);
%
\node[] at (1.25,-0.5) {$b$};
\node[] at (2.1,2) {$s$};
\end{tikzpicture}
&
&
\begin{tikzpicture}[scale=0.8]
\draw[very thick, black,line cap=round,  fill=lightgray!10] (0,0)--(2,3)--(1.5,1)--(3,0)--(0,0);
\draw[line width=2.5pt,line cap=round,  black] (3,0)--(0,0);
\draw[very thick, cyan, dotted, line cap=round] (1.5,1)--(0,0);
\pgfmathsetmacro{\ex}{0}
\pgfmathsetmacro{\ey}{0}
\fill[fill = red, opacity = 0.4] (\ex,\ey) ++(0:.5) arc (0:180:.5);
\draw[thick, fill=red]  (0,0) circle (0.15);
%
\node[] at (1.25,-0.4) {$s$};
\node[] at (0,-0.5) {$b$};
\end{tikzpicture}
\\
\scriptstyle{\textup{{ Fig.\,19: \(Q\) convex}}} & & \scriptstyle{\textup{{ Fig.\,20: Reflex vertex on \(s\)}}}
& & \scriptstyle{\textup{{ Fig.\,21: Reflex vertex not on \(s\)}}}
\end{array}
\end{align*}

Next, assume one of the endpoints of \(s\) is a reflex vertex, as in Fig.\,20. Then \(s = \overline{wv}\) for some vertices \(v,w\) in \(Q\), with \(v\) reflex. The ray \(\vv{wv}\) passes into the interior of \(Q\) and intersects \(\partial Q\) at some point \(b\), which cannot be a reflex vertex, as \(Q\) can have at most one reflex vertex. We place an entire boundary half-guard \({\tt g}\) at the point \(b\). The quadrilateral \(Q\) may be partitioned along the \(\vv{wv}\)-cut \(\overline{wb}\) into two triangles \(T_1\), \(T_2\), and since \({\tt g}\) is an entire boundary half-guard in each triangle by Lemma~\ref{BigToSmall}, we have by Lemma~\ref{Tri} that \(\{{\tt g}\}\) monitors \(Q = T_1 \cup T_2\). Moreover, as \(x_{\tt g} \notin s\) and \({\tt g}\) sees \(s\), we have that \({\tt g}\) is \(s\)-aligned.

Finally, assume that \(Q\) is non-convex, but both endpoints of \(s\) are convex vertices, as in Fig.\,21. Let \(b\) be the vertex of \(s\) nonadjacent to a reflex angle of \(Q\). Let \({\tt g}\) be the entire boundary half-guard for \(P\) at \(b\) with half-plane oriented such that \(\partial H_{\tt g}\) is colinear to \(s\). Then \({\tt g}\) is \(s\)-aligned, and it follows by Lemma~\ref{MiniQuad} that \(\{{\tt g}\}\) monitors \(Q\).
\end{proof}

\begin{Lemma}\label{Pent}
Let \(P\) be a pentagon. There exists an entire boundary half-guard \({\tt g}\) for \(P\) such that
\(\{ {\tt g}\}\) monitors \(P\), and \(x_{\tt g}\) is not a vertex of \(P\).
\end{Lemma}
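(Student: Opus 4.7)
The plan is to proceed by case analysis on the number of reflex vertices of $P$, which is at most two for a pentagon since the interior angles sum to $540^\circ$. If $P$ is convex, I would place the half-guard ${\tt g}$ at an interior point of any side $s$ with $\partial H_{\tt g}$ colinear to $s$ and $H_{\tt g}$ containing $P$ locally; Lemma~\ref{Tri} then immediately yields that $\{{\tt g}\}$ monitors $P$, with $x_{\tt g}$ not a vertex.

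For the non-convex cases, my strategy exploits the $\vv{uv}$-cut construction at a reflex vertex $v$ with adjacent vertex $u$. I first consider the cut $\overline{vb}$, where $b$ is its endpoint on $\partial P$. If $b$ lies in the relative interior of a side $s$, I would place ${\tt g}$ at $b$ on $s$ with $\partial H_{\tt g}$ colinear to $s$ and $H_{\tt g}$ on the interior side. The cut partitions $P$ into a triangle and a quadrilateral (since the side counts sum to $n+2=7$), and I would verify that ${\tt g}$ is an entire boundary half-guard in each piece---with $v$ acting as a degenerate colinear point on the side $\overline{ub}$ of the piece containing $u$, and as a convex vertex in the piece containing $w$ since the cut bisects the reflex angle. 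Lemmas~\ref{Tri} and~\ref{MiniQuad} would then confirm that $\{{\tt g}\}$ monitors each piece, hence all of $P$.

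The hard part will be the degenerate subcase in which both the $\vv{uv}$-cut and the $\vv{wv}$-cut (from the other adjacent vertex $w$) terminate at vertices of $P$. Here both cuts are diagonals, forcing colinear triples $u,v,b_u$ and $w,v,b_w$, where $b_u, b_w$ are the two vertices of $P$ non-adjacent to $v$, and $\overline{b_u b_w}$ is a side of $P$. In this situation, I would place ${\tt g}$ at the midpoint of $\overline{b_u b_w}$ with $H_{\tt g}$ on the interior side. The colinearities ensure that every vertex of $P$ lies in the closed half-plane bounded by the line through $\overline{b_u b_w}$, so $P \subseteq H_{\tt g}$. I would then argue that the inner triangle $[v, b_u, b_w]$ lies in the kernel of $P$, since $v$ is the only reflex vertex and the diagonals $\overline{v b_u}, \overline{v b_w}$ triangulate $P$ in a way that makes any point in this inner triangle see all three constituent triangles; therefore the midpoint of $\overline{b_u b_w}$ is a valid kernel point and monitors $P$. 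Pentagons with two reflex vertices admit an analogous analysis with additional cut options, making the fully-degenerate configuration generally easier to avoid.
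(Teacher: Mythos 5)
Your handling of the convex case and the single-reflex-vertex case matches the paper's: the same \(\vv{uv}\)-cut, the same placement of an entire boundary half-guard at the cut's endpoint \(b\) when \(b\) is interior to a side, and the observation that the two resulting pieces are convex so Lemma~\ref{Tri} applies. (One small correction: the \(\vv{uv}\)-cut does not bisect the reflex angle at \(v\); it extends the side \(\overline{uv}\), leaving an angle equal to the reflex angle minus \(180^\circ\) in the piece containing \(w\). That angle is still convex, so your conclusion stands.) Your treatment of the doubly degenerate subcase, where both rays terminate at the non-adjacent vertices \(x,y\), differs from the paper's: you place the guard at the midpoint of \(\overline{xy}\) and argue via the kernel, whereas the paper takes \(b\in\overline{xy}\) on the bisector of the reflex angle at \(v\), so that the cut \(\overline{vb}\) again splits \(P\) into two convex pieces and Lemma~\ref{Tri} applies. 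Your kernel argument is sound --- the colinearities \(u,v,x\) and \(w,v,y\) do place the whole segment \(\overline{xy}\) in the kernel and force \(P\subseteq H_{\tt g}\) --- so this is a legitimate, if slightly heavier, alternative for that subcase.

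The genuine gap is the two-reflex-vertex case, which you dispose of in one sentence as ``analogous with additional cut options.'' It is not analogous in the way your one-reflex argument requires: whichever single cut you make from one reflex vertex, the quadrilateral piece must contain the other reflex vertex (a triangle cannot absorb it), so that piece is \emph{not} convex and Lemma~\ref{Tri} does not monitor it. You must instead invoke Lemma~\ref{MiniQuad}, which requires the guard to sit at a vertex of the quadrilateral both of whose neighbours are convex --- and this forces a specific choice of cut. The paper writes \(P=[u,v,w,x,y]\) and, for nonadjacent reflex vertices \(u,w\), cuts along \(\vv{vw}\), landing at \(b\) in the relative interior of \(\overline{xy}\) and producing the triangle \([b,w,x]\) and the quadrilateral \([b,y,u,v]\), in which \(b\)'s neighbours \(y,v\) are convex; for adjacent reflex vertices \(v,w\) it cuts along \(\vv{uv}\), producing \([b,y,u]\) and \([b,v,w,x]\) with \(b\)'s neighbours \(v,x\) convex. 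A carelessly chosen cut (e.g.\ \(\vv{xw}\) in the nonadjacent case) can leave the guard adjacent to the surviving reflex vertex, where Lemma~\ref{MiniQuad} does not apply. You also assert without justification that the degenerate configuration is ``easier to avoid'' here; in fact one must check, as the paper implicitly does, that these particular cuts land in the relative interior of \(\overline{xy}\). Until this case analysis is actually carried out, the proof is incomplete.
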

\begin{proof}
The pentagon \(P\) has \(0,1\) or \(2\) reflex vertices. If \(P\) has no reflex vertices, then it is convex, and so an entire boundary half-guard placed along the relative interior of any side of \(P\) satisfies the claim by Lemma~\ref{Tri}.

Now assume \(P\) has one reflex vertex. Let \(P = [u,v,w,x,y]\), with \(v\) reflex. The ray \(\vv{uv}\) passes from \(v\) into the interior of \(P\) and intersects \(\partial P\) at some point \(b\). If \(b\) is not a vertex, as in Fig.\,22, then we may place an entire boundary half-guard \({\tt g}\) for \(P\) at \(b\). Then \(P\) may be partitioned along the \(\vv{uv}\)-cut \(\overline{vb}\) into two convex polygons \(P_1\) and \(P_2\). As \({\tt g}\) is an entire boundary half-guard in each of the convex polygons \(P_1\), \(P_2\) by Lemma~\ref{BigToSmall}, we have that \(\{{\tt g}\}\) monitors \(P = P_1 \cup P_2\) by Lemma~\ref{Tri}.
\begin{align*}
\begin{array}{cccc}
\begin{tikzpicture}[scale=0.8]
\draw[very thick, black, line cap=round, fill=lightgray!10] (1.2,1.5)--(2,1)--(2.5,3)--(3.5,0)--(0.8,0)--(1.2,1.5);
\draw[very thick, cyan, dotted, line cap=round] (1.75,0)--(2,1);
\pgfmathsetmacro{\ex}{1.75}
\pgfmathsetmacro{\ey}{0}
\fill[fill = red, opacity = 0.4] (\ex,\ey) ++(0:.5) arc (0:180:.5);
\draw[thick, fill=red]  (1.75,0) circle (0.15);
%
\node[right] at (3.5,0) {$y$};
\node[left] at (1.2,1.5) {$w$};
\node[above] at (1.85,1.15) {$v$};
\node[right] at (2.5,3) {$u$};
\node[left] at (0.8,0) {$x$};
\node[] at (1.75,-0.5) {$b$};
\end{tikzpicture}
&&
\begin{tikzpicture}[scale=0.8]
\draw[very thick, black, line cap=round, fill=lightgray!10] (0.5,2)--(2,1)--(2.5,3)--(3.5,0)--(1.75,0)--(0.5,2);
\draw[very thick, cyan, dotted, line cap=round] (1.75,0)--(2,1);
\draw[very thick, cyan, dotted, line cap=round] (3.5,0)--(2,1);
\draw[very thick, cyan, dotted, line cap=round] (2.4,0)--(2,1);
\pgfmathsetmacro{\ex}{2.4}
\pgfmathsetmacro{\ey}{0}
\fill[fill = red, opacity = 0.4] (\ex,\ey) ++(0:.5) arc (0:180:.5);
\draw[thick, fill=red]  (2.4,0) circle (0.15);
%
\node[right] at (3.5,0) {$y$};
\node[left] at (0.5,2) {$w$};
\node[above] at (1.85,1.15) {$v$};
\node[right] at (2.5,3) {$u$};
\node[left] at (1.72,0) {$x$};
\node[] at (2.4,-0.5) {$b$};
\end{tikzpicture}
\\
\scriptstyle{\textup{{ Fig.\,22: Half-guard on ray from reflex vertex}}} & & \scriptstyle{\textup{{ Fig.\,23: Half-guard on reflex angle bisector}}}
\end{array}
\end{align*}
Similarly, if the ray \(\vv{wv}\) intersects \(\partial P\) at a non-vertex point \(b\), placing an entire boundary half-guard at \(b\) satisfies the claim.
Assume then that \(\vv{uv}\) intersects \(\partial P\) at \(x\) and \(\vv{wv}\) intersects \(\partial P\) at \(y\), as in Fig.\,23. Then there exists some \(b\) in the relative interior of \(\overline{xy}\) such that \(\overline{vb}\) is a bisector for the interior angle at \(v\). Then \(P\) may be partitioned along \(\overline{vb}\) into two convex polygons \(P_1\) and \(P_2\). Place an entire boundary half-guard \({\tt g}\) for \(P\) at \(b\). We have then that \({\tt g}\) is an entire boundary half-guard for \(P_1\) and \(P_2\) by Lemma~\ref{BigToSmall}, and so \(\{{\tt g}\}\) monitors \(P = P_1 \cup P_2\) by Lemma~\ref{Tri}.

Finally, assume that \(P\) has two reflex vertices. Then there are two possible cases; either the reflex vertices are adjacent or nonadjacent. First assume that the reflex vertices are nonadjacent. Let \(P = [u,v,w,x,y]\) with \(u,w\) reflex, as in Fig.\,24. Then the ray \(\vv{vw}\) must intersect \(\partial P\) at some point \(b\) in the relative interior of the side \(\overline{xy}\). Then \(P\) may be partitioned along the \(\vv{vw}\)-cut \(\overline{wb}\) into the triangle \(T=[b,w,x]\) and the quadrilateral \(Q=[b,y,u,v]\). Place an entire boundary half-guard \({\tt g}\) for \(P\) at \(b\). We have then that \({\tt g}\) is an entire boundary half-guard for \(P_1\) and \(P_2\) by Lemma~\ref{BigToSmall}, and thus \(\{{\tt g}\}\) monitors \(P = P_1 \cup P_2\) by Lemmas~\ref{Tri} and \ref{MiniQuad}.
\begin{align*}
\begin{array}{cccc}
\begin{tikzpicture}[scale=0.8]
\draw[very thick, black, line cap=round, fill=lightgray!10] (0.5,0)--(-0.5,0.5)--(-2,3)--(-1.5,1)--(-2.5,0)--(0.5,0);
%
%
\draw[very thick, cyan, dotted, line cap=round] (-1.5,1)--(-1.25,0);
\pgfmathsetmacro{\ex}{-1.25}
\pgfmathsetmacro{\ey}{0}
\fill[fill = red, opacity = 0.4] (\ex,\ey) ++(0:.5) arc (0:180:.5);
\draw[thick, fill=red]  (-1.25,0) circle (0.15);
\node[] at (-1.25,-0.5) {$b$};
\node[right] at (-0.5,0.8) {$u$};
\node[left] at (-2,3) {$v$};
\node[left] at (-1.6,1.2) {$w$};
\node[below] at (-2.5,0) {$x$};
\node[below] at (0.5,0) {$y$};
\end{tikzpicture}
&&
\begin{tikzpicture}[scale=0.8]
\draw[very thick, black, line cap=round, fill=lightgray!10] (-3,0)--(0,0)--(-2,3)--(-1.75,2)--(-2,0.5)--(-3,0);
\draw[very thick, cyan, dotted, line cap=round] (-1.75,2)--(-1.25,0);
\pgfmathsetmacro{\ex}{-1.25}
\pgfmathsetmacro{\ey}{0}
\fill[fill = red, opacity = 0.4] (\ex,\ey) ++(0:.5) arc (0:180:.5);
\draw[thick, fill=red]  (-1.25,0) circle (0.15);
%
\node[] at (-1.25,-0.5) {$b$};
\node[left] at (-2,3) {$u$};
\node[left] at (-1.75,2) {$v$};
\node[left] at (-2,0.7) {$w$};
\node[below] at (-3,0) {$x$};
\node[below] at (0,0) {$y$};
\end{tikzpicture}
\\
\scriptstyle{\textup{{ Fig.\,24: Nonadjacent reflex vertices}}} & & \scriptstyle{\textup{{ Fig.\,25: Adjacent reflex vertices}}}
\end{array}
\end{align*}
Next, assume that the reflex vertices are adjacent. Let \(P = [u,v,w,x,y]\), with \(v,w\) reflex, as in Fig.\,25. Then the ray \(\vv{uv}\) must intersect \(\partial P\) at some point \(b\) in the relative interior of the side \(\overline{xy}\). Then \(P\) may be partitioned along the \(\vv{uv}\)-cut \(\overline{vb}\) into the triangle \(T=[b,y,u]\) and the quadrilateral \(Q=[b,v,w,x]\). Place an entire boundary half-guard \({\tt g}\) for \(P\) at \(b\). We have then that \({\tt g}\) is an entire boundary half-guard for \(P_1\) and \(P_2\) by Lemma~\ref{BigToSmall}, and thus \(\{{\tt g}\}\) monitors \(P = P_1 \cup P_2\) by Lemmas~\ref{Tri} and \ref{MiniQuad}.
\end{proof}

\section{Merging half-guard sets}\label{mergesec}

In this section we develop tools for constructing a CHG set for a polygon \(P\) by merging CHG sets for polygons which partition \(P\).

\begin{Lemma}\label{AddTri}
Assume \(P\) is a polygon monitored by an entire boundary half-guard \({\tt g}\) at \(b \in \partial P\). Let \(\overline{vw}\) be a side in \(P\) not containing \(b\). Then \(P\) contains the triangle \(T=[b,v,w]\) and \({\tt g}\) is an entire boundary half-guard in \(T\) which monitors \(T\).
\end{Lemma}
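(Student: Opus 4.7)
The plan is to verify the three assertions in order: containment of $T$ in $P$, the entire boundary half-guard property of ${\tt g}$ in $T$, and finally that $\{{\tt g}\}$ monitors $T$.

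First I would establish that $T \subseteq P$. Since $\overline{vw}$ is a side of $P$, we have $\overline{vw} \subset P$, so ${\tt g}$ sees every point $p \in \overline{vw}$ because ${\tt g}$ monitors $P$. This means $\overline{bp} \subseteq P \cap H_{\tt g}$ for every such $p$. The triangle $T$ is precisely the union $\bigcup_{p \in \overline{vw}} \overline{bp}$, so $T \subseteq P$, and as a bonus $T \subseteq H_{\tt g}$.

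Next I would verify that ${\tt g}$ is an entire boundary half-guard in $T$. Since $b$ is a vertex of $T$, we have $x_{\tt g} = b \in \partial T$. The vertices of $T$ adjacent to $b$ are $v$ and $w$, so I must show that the cone $C$ formed by the interior angle $\angle vbw$ in $T$ satisfies $C \subseteq H_{\tt g}$. Because $v, w \in H_{\tt g}$ and $b \in \partial H_{\tt g}$, the rays emanating from $b$ through $v$ and through $w$ lie entirely in the closed half-plane $H_{\tt g}$. The interior angle of a triangle is strictly less than $180^\circ$, so $C$ is exactly the convex hull of these two rays; since $H_{\tt g}$ is convex, it follows that $C \subseteq H_{\tt g}$, as required.

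Finally, the conclusion that $\{{\tt g}\}$ monitors $T$ follows immediately from Lemma~\ref{Tri}, since $T$ is a (convex) triangle and ${\tt g}$ is an entire boundary half-guard in $T$.

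The main subtlety --- really the only nontrivial point --- is the step showing that the whole cone $C$, not merely the two bounding rays, lies in $H_{\tt g}$. This is where one must explicitly invoke both the convexity of the half-plane $H_{\tt g}$ and the fact that the interior angle at a triangle vertex is strictly less than $180^\circ$; otherwise the two rays might only bound $H_{\tt g}$ on one side rather than sitting inside it.
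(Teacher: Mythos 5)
Your proof is correct and follows essentially the same route as the paper's: establish \(T \subseteq P\), show \({\tt g}\) is an entire boundary half-guard in \(T\), and conclude via Lemma~\ref{Tri}. The only differences are cosmetic --- the paper deduces \(T \subseteq P\) from the simplicity of \(P\) once the three sides \(\overline{bv}, \overline{bw}, \overline{vw}\) are known to lie in \(P\), and cites Lemma~\ref{BigToSmall} for the entire-boundary step, whereas you fill \(T\) with the segments \(\overline{bp}\) for \(p \in \overline{vw}\) and verify the cone condition directly; both are valid.
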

\begin{proof}
As \({\tt g}\) monitors \(P\), we have that each of the line segments \(\overline{bv}, \overline{bw}, \overline{vw}\) belong to \(P\). Since \(P\) is simple, we have \(T \subseteq P\), and therefore \({\tt g}\) is an entire boundary half-guard in \(T\) by Lemma~\ref{BigToSmall} and monitors \(T\) by Lemma~\ref{Tri}.
\end{proof}

\begin{Lemma}\label{JoinDigraphs}
Let \(V\) be a graph, and assume that \(V_1, V_2\) are connected subgraphs of \(V\), and every node of \(V\) belongs to at least one of \(V_1, V_2\).  Assume there exists \(x \in V_1\), \(y\in V_2\) which are adjacent in \(V\). Then \(V\) is connected.
\end{Lemma}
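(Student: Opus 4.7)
The plan is to prove connectedness of $V$ directly by exhibiting, for each pair of nodes in $V$, an explicit path between them, built by concatenating paths in $V_1$ and $V_2$ through the bridging edge $\overline{xy}$.

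First I would fix arbitrary nodes $a,b \in V$ and use the hypothesis that every node belongs to $V_1 \cup V_2$ to split into cases according to which subgraph contains $a$ and $b$. If both lie in the same $V_i$, connectedness of $V_i$ immediately yields a path from $a$ to $b$ in $V_i$, hence in $V$. The remaining case is when (without loss of generality) $a \in V_1$ and $b \in V_2$; here I would invoke connectedness of $V_1$ to obtain a path from $a$ to $x$, then traverse the edge from $x$ to $y$ (which exists by hypothesis), and finally invoke connectedness of $V_2$ to obtain a path from $y$ to $b$. Concatenating these three gives a walk from $a$ to $b$ in $V$, which contains a path.

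There is no real obstacle here: the argument is standard and essentially reduces to the observation that the union of two connected graphs sharing an edge (or a vertex) is connected. The only minor bookkeeping is the case split on which $V_i$ contains each of $a,b$, and the mild asymmetry in the mixed case, which is handled by the symmetric role of $x$ and $y$. No additional lemmas from the preliminaries are required beyond the definition of connectedness given in Section~\ref{prelimsec}.
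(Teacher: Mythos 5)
Your proposal is correct and follows essentially the same argument as the paper's proof: a case split on which of $V_1,V_2$ contains each endpoint, using connectedness of each subgraph and the bridging edge $\overline{xy}$ to concatenate paths in the mixed case. Your remark that the concatenation is a priori only a walk containing a path is a small point of extra care that the paper glosses over.
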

\begin{proof}
Let \(v,w\) be any nodes in \(V\). If \(v,w\) both belong to \(V_i\), for \(i \in \{1,2\}\), then since \(V_i\) is connected, there exists a path \(v\to w\) in \(V\). Assume then that \(v \in V_1\), \(w \in V_2\). Since \(V_1\) is connected, there exists a path \(v \to x\) in \(V_1\), and since \(V_2\) is connected, there exists a path \(y \to w\) in \(V_2\). Therefore the concatenation \(v \to x \to y \to w\) is a path in \(V\). Similarly, if \(v \in V_2\), \(w \in V_1\), then we have a  path \(v \to y \to x \to w\) in \(V\). Thus \(V\) is connected.
\end{proof}

\begin{Lemma}\label{IncludeGuard}
If \({\tt G}\) is a CHG set for \(P\), and \({\tt g}\) is an entire boundary half-guard in \(P\), then \({\tt G} \cup \{{\tt g}\}\) is a CHG set for \(P\).
\end{Lemma}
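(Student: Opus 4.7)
The plan is to verify the two defining conditions of a CHG set separately: monitoring of $P$, and connectivity of the mutual visibility graph. Monitoring is immediate, since ${\tt G} \subseteq {\tt G} \cup \{{\tt g}\}$ and ${\tt G}$ already monitors $P$, so every point of $P$ is still seen by some guard in the enlarged set.

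The substantive step is connectivity of $V_{{\tt G}\cup\{{\tt g}\}}$. Since $V_{\tt G}$ is connected by hypothesis, Lemma~\ref{JoinDigraphs} (applied with $V_1 = V_{\tt G}$ and $V_2 = \{{\tt g}\}$) reduces this to exhibiting a single ${\tt h} \in {\tt G}$ which ${\tt g}$ and ${\tt h}$ see each other mutually. Because ${\tt G}$ monitors $P$ and $x_{\tt g} \in P$, there exists some ${\tt h} \in {\tt G}$ that sees $x_{\tt g}$; equivalently, $\overline{x_{\tt h} x_{\tt g}} \subseteq P \cap H_{\tt h}$. One direction of the visibility is therefore free.

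The main obstacle is to prove the reverse direction, namely that ${\tt g}$ sees ${\tt h}$, that is, $\overline{x_{\tt g} x_{\tt h}} \subseteq P \cap H_{\tt g}$. Containment in $P$ is already known; what must be shown is containment in the closed half-plane $H_{\tt g}$. Here I will use that ${\tt g}$ is entire: if $C$ denotes the interior angle cone at $x_{\tt g}$, then by definition $C \subseteq H_{\tt g}$. Since $\overline{x_{\tt h} x_{\tt g}} \subseteq P$, every point on this segment sufficiently close to $x_{\tt g}$ lies in a small neighborhood of $x_{\tt g}$ in which $P$ coincides with the cone $C$, hence lies in $H_{\tt g}$. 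Because $x_{\tt g} \in \partial H_{\tt g}$, this shows that the direction from $x_{\tt g}$ toward $x_{\tt h}$ points into the closed half-plane $H_{\tt g}$; by convexity of $H_{\tt g}$, the whole ray from $x_{\tt g}$ through $x_{\tt h}$, and in particular the segment $\overline{x_{\tt g} x_{\tt h}}$, lies in $H_{\tt g}$.

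Combining the two containments gives $\overline{x_{\tt g} x_{\tt h}} \subseteq P \cap H_{\tt g}$, so ${\tt g}$ sees ${\tt h}$, providing the required edge between ${\tt g}$ and an element of $V_{\tt G}$ in the mutual visibility graph. Lemma~\ref{JoinDigraphs} then yields connectivity of $V_{{\tt G}\cup\{{\tt g}\}}$, and the proof is complete. The only delicate point is the local-to-global argument in the third paragraph, where it matters both that $C$ (not merely a proper subcone) lies in $H_{\tt g}$ and that $x_{\tt g}$ sits on $\partial H_{\tt g}$ so that a single direction in $H_{\tt g}$ propagates the entire segment into $H_{\tt g}$.
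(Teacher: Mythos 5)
Your proof is correct and follows essentially the same route as the paper: locate ${\tt h} \in {\tt G}$ seeing $x_{\tt g}$ via the monitoring hypothesis, use entirety of ${\tt g}$ (the interior-angle cone $C \subseteq H_{\tt g}$) to conclude ${\tt g}$ sees ${\tt h}$ back, and invoke Lemma~\ref{JoinDigraphs}. Your local-to-global step via convexity of $H_{\tt g}$ just spells out in more detail the paper's assertion that $\overline{x_{\tt h}x_{\tt g}} \subseteq P$ forces $\overline{x_{\tt h}x_{\tt g}} \subseteq C$.
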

\begin{proof}
Since \({\tt G}\) monitors \(P\), there exists some \({\tt h} \in {\tt G}\) that sees \(x_{\tt g}\). Let \(u,v \neq x_{\tt g}\) be vertices adjacent to \(x_{\tt g}\) in \(P\), and let \(C\) be the cone formed by the interior angle \(\angle u x_{\tt g} v\). As \({\tt g}\) is an entire boundary half-guard, we have that \(C \subseteq H_{{\tt g}}\). As \({\tt h}\) sees \(x_{\tt g}\), the line segment \(\overline{x_{\tt h}x_{\tt g}}\) must be contained in \(P\), and therefore must also be contained in \(C\). Therefore \(\overline{x_{\tt h}x_{\tt g}}\) is contained in \(H_{{\tt g}} \cap P\), so \({\tt g}\) sees \({\tt h}\). Thus \(V_{{ \tt G} \cup \{\tt g\}}\) is connected by Lemma~\ref{JoinDigraphs}, completing the proof.
\end{proof}

\begin{Lemma}\label{SeeEachOther}
Let \(P\) be a polygon, and \(P_1, P_2\) be polygons which partition \(P\) and meet in a line segment \(s = P_1 \cap P_2\). For \(i \in \{1,2\}\), let \({\tt G}_i\) be an \(s\)-aligned CHG set for \(P_i\). 
Then \({\tt G} = {\tt G}_1 \cup {\tt G}_2\) is a CHG set for \(P\).
\end{Lemma}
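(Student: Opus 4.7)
The plan is to verify the two defining conditions of a CHG set for \({\tt G} = {\tt G}_1 \cup {\tt G}_2\) separately. Monitoring is immediate: any \(p \in P = P_1 \cup P_2\) lies in some \(P_i\), and by hypothesis some \({\tt g} \in {\tt G}_i \subseteq {\tt G}\) sees \(p\) in \(P_i\); the witnessing line segment lies in \(P_i \subseteq P\), so \({\tt g}\) sees \(p\) in \(P\) as well.

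For connectivity of \(V_{\tt G}\), I will invoke Lemma~\ref{JoinDigraphs} with \(V_1\) and \(V_2\) equal to the subgraphs of \(V_{\tt G}\) induced by \({\tt G}_1\) and \({\tt G}_2\) respectively. Since visibility in a subpolygon \(P_i\) persists in the larger polygon \(P\), the connected graph \(V_{{\tt G}_i}\) is a subgraph of \(V_i\), and hence each \(V_i\) is connected. It then suffices to produce an edge in \(V_{\tt G}\) between some \({\tt g}_1 \in {\tt G}_1\) and some \({\tt g}_2 \in {\tt G}_2\); I take \({\tt g}_i\) to be the \(s\)-aligned guard in \({\tt G}_i\) supplied by the hypothesis and argue that they see each other.

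The heart of the proof is showing \(\overline{x_{{\tt g}_1}x_{{\tt g}_2}} \subseteq P \cap H_{{\tt g}_1} \cap H_{{\tt g}_2}\). Both \(x_{{\tt g}_i}\) lie on the line \(\ell\) through \(s\), so \(\overline{x_{{\tt g}_1}x_{{\tt g}_2}} \subseteq \ell\). The partition property \(P_1 \cap P_2 = s\) forces any portion of \(\ell \cap P\) lying beyond an endpoint of \(s\) to sit in the interior of exactly one \(P_i\); in particular \({\tt g}_1\) and \({\tt g}_2\) cannot lie on the same extension of \(s\) along \(\ell\), and \(\overline{x_{{\tt g}_1}x_{{\tt g}_2}}\) decomposes along \(\ell\) into (possibly degenerate) consecutive pieces contained respectively in \(P_1\), in \(s\), and in \(P_2\). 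Each piece sits in \(P\) because each \({\tt g}_i\) sees all of \(s\), so \(\overline{x_{{\tt g}_1}x_{{\tt g}_2}} \subseteq P\). For the half-plane conditions, if \(x_{{\tt g}_i} \in s\) then \(\partial H_{{\tt g}_i} = \ell\) by \(s\)-alignment and the segment lies in \(\partial H_{{\tt g}_i} \subseteq H_{{\tt g}_i}\); otherwise \(x_{{\tt g}_i}\) sits on \(\ell\) just outside \(s\), and the visibility of all of \(s\) by \({\tt g}_i\) forces the ray from \(x_{{\tt g}_i}\) along \(\ell\) through \(s\) to lie in \(H_{{\tt g}_i}\), a ray that contains \(\overline{x_{{\tt g}_1}x_{{\tt g}_2}}\). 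The principal obstacle I anticipate is precisely this geometric case analysis---careful use of \(P_1 \cap P_2 = s\) to constrain the admissible positions of \({\tt g}_1\) and \({\tt g}_2\) on \(\ell\).
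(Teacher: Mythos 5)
Your proposal is correct and follows essentially the same route as the paper: monitoring is immediate, and connectivity reduces via Lemma~\ref{JoinDigraphs} to showing that the two \(s\)-aligned guards see each other, which both you and the paper establish by locating \(x_{{\tt g}_1}, x_{{\tt g}_2}\) on the line through \(s\) and decomposing \(\overline{x_{{\tt g}_1}x_{{\tt g}_2}}\) into (possibly degenerate) pieces lying in \(P_1\), in \(s\), and in \(P_2\), with the half-plane conditions extracted from \(s\)-alignment exactly as in the paper's two cases. The one point where the paper is more careful than your sketch is the claim that the guards cannot lie on the same extension of \(s\): this does not follow from \(P_1 \cap P_2 = s\) alone (the line beyond an endpoint of \(s\) need not sit in a single \(P_i\)), but rather from the fact that \(\overline{x_{{\tt g}_1}u} \subseteq P_1\) because \({\tt g}_1\) sees the near endpoint \(u\), so that \(x_{{\tt g}_2} \notin P_1\) cannot lie on this segment, and symmetrically.
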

\begin{proof}
It is clear that \({\tt G}\) monitors \(P\), so we must show only that \(V_{\tt G}\) is connected. For \(i \in \{1,2\}\), let \({\tt g}_i \in {\tt G}_i\) be an \(s\)-aligned half-guard in \(P_i\). 

First, assume that \(x_{{\tt g}_1} \in s\). Since \(x_{{\tt g}_2}\) sees all of \(s\), we have that \({\tt g}_2\) sees \({\tt g}_1\). Thus \(\overline{x_{{\tt g}_1}x_{{\tt g}_2}} \subset P\). But \(\overline{x_{{\tt g}_1}x_{{\tt g}_2}}\) is colinear to \(s\) as well, and is thus by assumption colinear to \(\partial H_{{\tt g}_1}\). Therefore \(\overline{x_{{\tt g}_1}x_{{\tt g}_2}} \subset H_{{\tt g}_1}\), and thus \({\tt g}_1\) sees \({\tt g}_2\). Thus it follows by Lemma~\ref{JoinDigraphs} that \(V_{\tt G}\) is connected. The case where \(x_{{\tt g}_2} \in s\) follows similarly.

Then assume that neither of \(x_{{\tt g}_1}\), \(x_{{\tt g}_2}\) lie on \(s\). Since \(s = P_1 \cap P_2\), we have that \(x_{{\tt g}_1} \notin P_2\) and \(x_{{\tt g}_2} \notin P_1\). Writing \(s = \overline{uv}\), we have then that \(x_{{\tt g}_1}, x_{{\tt g}_2}, u, v\) are colinear and distinct. Assume without loss of generality that \(u\) is the vertex of \(s\) closest to \(x_{{\tt g}_1}\). Then \(x_{{\tt g}_2}\) cannot lie on the line segment \(\overline{x_{{\tt g}_1}u}\), since by assumption \(\overline{x_{{\tt g}_1}u} \subset P_1\) because \({\tt g}_1\) sees \(u\) in \(P_1\). Similarly \(x_{{\tt g}_1}\) cannot lie on the line segment \(\overline{x_{{\tt g}_2}u}\) because \({\tt g}_2\) sees \(u\) in \(P_2\). Therefore \(x_{{\tt g}_2}\) must lie on the opposite side of \(s\) from \(x_{{\tt g}_1}\), so that \(v\) is the vertex of \(s\) closest to \(x_{{\tt g}_2}\). Therefore the ray \(\vv{x_{{\tt g}_1}u}\) contains the point \(x_{{\tt g}_2}\), and the ray \(\vv{x_{{\tt g}_2}v}\) contains the point \(x_{{\tt g}_1}\). Since \({\tt g}_1\) sees \(u\), it follows that \(x_{{\tt g}_2} \in H_{{\tt g}_1}\), and since \({\tt g}_2\) sees \(v\), it follows that \(x_{{\tt g}_1} \in H_{{\tt g}_2}\). Since \(\overline{x_{{\tt g}_1} x_{{\tt g}_2}} = \overline{x_{{\tt g}_1}u} \cup  \overline{uv} \cup \overline{v x_{{\tt g}_2}} \subseteq P_1 \cup P_2 = P\), it follows that \({\tt g}_1\) and \({\tt g}_2\) see each other in \(P\). Thus by Lemma~\ref{JoinDigraphs}, \(V_{{\tt G}}\) is connected.
\end{proof}

\section{Cooperative half-guard sets for arbitrary polygons}\label{mainsec}

In this section we prove our main theorem, which affirms that a CHG set of size \(\lfloor n/2\rfloor -1\) exists for every polygon with \(n\geq 4\) sides.

\begin{Theorem}\label{MainThm}
Let \(P\) be a polygon with \(n \geq 4\) sides. 
\begin{enumerate}
\item If \(n\) is odd, then there exists a CHG set \({\tt G}\) for \(P\) such that \(|{\tt G}| = (n-3)/2\).
\item If \(n\) is even, then for any side \(s\) in \(P\), there exists an \(s\)-aligned CHG set \({\tt G}\) for \(P\) such that \(|{\tt G}| =(n-2)/2\).
\end{enumerate}
\end{Theorem}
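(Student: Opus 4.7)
The plan is to prove both parts of Theorem~\ref{MainThm} simultaneously by strong induction on $n \geq 4$. The base cases $n = 4$ and $n = 5$ follow immediately: Lemma~\ref{Quad} supplies an $s$-aligned CHG set of size $1 = (n-2)/2$ in the quadrilateral case, and Lemma~\ref{Pent} supplies a single-guard CHG set of size $1 = (n-3)/2$ in the pentagon case.

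For the even inductive step with $n \geq 6$ and specified side $s$, I would invoke Corollary~\ref{OddOddDiag} to produce a diagonal $d$ containing an endpoint of $s$ that decomposes $P$ into two odd-sided polygons $P_1, P_2$ with $n_1 + n_2 = n + 2$; relabel so that $s \subset \partial P_1$. The odd inductive hypothesis supplies CHG sets ${\tt G}_1, {\tt G}_2$ of sizes $(n_i - 3)/2$, totalling $(n-4)/2$ guards. I would then adjoin one $s$-aligned entire boundary half-guard ${\tt g}$ placed in the relative interior of $s$, with $\partial H_{\tt g}$ colinear to $s$ and $H_{\tt g}$ covering the interior of $P$ near ${\tt g}$. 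The resulting set has size $(n-2)/2$ and is $s$-aligned by construction; monitoring is inherited from the sub-CHG sets, and connectivity of the mutual visibility graph is obtained via Lemma~\ref{IncludeGuard} (to absorb ${\tt g}$ as an entire boundary guard into each component that sees it) together with Lemma~\ref{SeeEachOther}-style reasoning across $d$ to bridge the two components.

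For the odd inductive step with $n \geq 7$, I would apply Corollary~\ref{PentDecomp} to decompose $P$ into a pentagonal hub $P_0$ and even-sided polygons $P_1, \ldots, P_k$, where $s_i := P_i \cap P_0$ is a diagonal of $P$ for each $i$. The even inductive hypothesis applied to each $P_i$ with chosen side $s_i$ produces an $s_i$-aligned CHG set ${\tt G}_i$ of size $(n_i - 2)/2$, while Lemma~\ref{Pent} supplies a single-guard set $\{{\tt g}_0\}$ monitoring $P_0$. A routine side-count (each shared diagonal $s_i$ is counted twice, giving $5 + \sum_{i=1}^{k} n_i = n + 2k$) yields a total of $1 + \sum (n_i - 2)/2 = (n-3)/2$ guards. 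Monitoring of $P$ is automatic, and the mutual visibility graph is connected provided one can verify that ${\tt g}_0$ is mutually visible with a distinguished $s_i$-aligned guard of each ${\tt G}_i$, using that ${\tt g}_0$ sees all of each $s_i$ (being an entire boundary half-guard for $P_0$) together with the colinearity arguments underlying Lemma~\ref{SeeEachOther}.

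The principal obstacle will be arranging two-way mutual visibility in the merging steps, particularly in the odd case: the position of ${\tt g}_0$ produced by Lemma~\ref{Pent} is largely dictated by the reflex configuration of $P_0$ rather than freely chosen, yet it must cooperate with up to five distinct diagonals $s_i$ simultaneously. I expect this to require a case analysis on the reflex pattern of $P_0$ and the location of ${\tt g}_0$ relative to each $s_i$, exploiting the flexibility in placing the $s_i$-aligned guards (supplied by Lemma~\ref{Quad} at the base step and propagated through the even-case induction) to ensure bidirectional line-of-sight holds at each diagonal.
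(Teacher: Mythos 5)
Your skeleton matches the paper's (induction on $n$, Corollary~\ref{OddOddDiag} for the even step, Corollary~\ref{PentDecomp} plus Lemma~\ref{Pent} for the odd step, base cases from Lemmas~\ref{Quad} and~\ref{Pent}), but both inductive steps have genuine gaps. In the even step, you place the bridging guard ${\tt g}$ in the relative interior of $s$. Since the diagonal $d$ contains an endpoint of $s$, the side $s$ lies entirely in $P_1$, so $x_{\tt g}$ is not on $\partial P_2$ at all: Lemma~\ref{IncludeGuard} cannot absorb ${\tt g}$ into $V_{{\tt G}_2}$, and the odd inductive hypothesis gives you no $d$-aligned guard in ${\tt G}_2$ for a Lemma~\ref{SeeEachOther} argument, so there is nothing to bridge the two components with. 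The paper instead places ${\tt g}$ at the vertex $u$ shared by $s$ and $d$, so that ${\tt g}$ is an entire boundary half-guard of \emph{both} $P_1$ and $P_2$ (Lemma~\ref{BigToSmall}) and Lemma~\ref{IncludeGuard} applies on each side. That placement forces $u$ to be convex, and part (ii) is claimed for \emph{any} side $s$, including sides with a reflex endpoint; the paper needs three further cases (its E2--E4) using a $\vv{uv}$-cut from the reflex endpoint, with separate treatments according to the parities of the two resulting pieces (one of which requires the \emph{even} inductive hypothesis with alignment along the cut, and one of which lands the cut on a vertex so that $n_1+n_2=n+1$). Your proposal omits all of this, and it cannot be skipped: the odd step feeds arbitrary diagonals back into part (ii), whose endpoints in the sub-polygons need not be convex.

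In the odd step, the obstacle you flag at the end is a real failure of the argument, not a technicality. If $x_{{\tt g}_0}$ is not colinear to $s_i$, the $s_i$-aligned guard ${\tt g}_i$ supplied by the inductive hypothesis may sit in the relative interior of $s_i$ with $\partial H_{{\tt g}_i}$ colinear to $s_i$ and $H_{{\tt g}_i}$ facing into $P_i$; such a guard sees no point strictly on the $P_0$ side of the line through $s_i$, so it cannot see ${\tt g}_0$, and mutual visibility fails. The inductive hypothesis only asserts the \emph{existence} of some $s_i$-aligned CHG set, so there is no ``flexibility in placing the $s_i$-aligned guards'' to exploit, and no case analysis on the reflex pattern of $P_0$ repairs this. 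The paper's resolution is a different idea: when $x_{{\tt g}_0}$ is not colinear to $s_i$, it adjoins the triangle $T=[x_{{\tt g}_0},u,v]\subseteq P_0$ (Lemma~\ref{AddTri}) to form $P_i'=P_i\cup T$ with $n_i+1$ (odd) sides, applies the \emph{odd} inductive hypothesis to $P_i'$ (note $(n_i+1-3)/2=(n_i-2)/2$, so the count is unchanged), and then absorbs ${\tt g}_0$ via Lemma~\ref{IncludeGuard} since ${\tt g}_0$ is an entire boundary half-guard of $P_i'$. Only when $x_{{\tt g}_0}$ is colinear to $s_i$ does the paper use the $s_i$-aligned even hypothesis and Lemma~\ref{SeeEachOther} as you propose. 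Without this triangle-augmentation device (or a substitute), the connectivity of $V_{\tt G}$ in your odd step is not established.
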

\begin{proof}
We go by induction on \(n\). The base case \(n = 4\) is the content of Lemma~\ref{Quad}. Now assume that \(P\) is a polygon with \(n>4\) sides, and the theorem statement holds for every polygon with fewer than \(n\) sides. We consider the even and odd cases for \(n\) separately.

{\em Assume \(n\) is even.} Let \(s = \overline{uv}\) be a side in \(P\). We proceed in cases (E1--E4) below.

(E1) Assume \(u,v\) are convex vertices in \(P\). By Lemma~\ref{OddOddDiag}, we may assume without loss of generality that there is another vertex \(w \in P\) such that \(\overline{uw}\) is a diagonal which decomposes \(P\) into polygons \(P_1, P_2\), with sides numbering \(n_1, n_2\), where \(n_1 + n_2 = n+2\) and \(n_1, n_2\) are odd.
\begin{align*}
\begin{array}{c}
\begin{tikzpicture}[scale=0.8]
\draw[very thick, black, line cap=round, fill=lightgray!10] (0,0)--(-1,1.5)--(1,3)--(1.5,2)--(2.3,3)--(5.7,3)--(5.35,2.4)--(5.9,1.5)--(6.5,2.7)--(7.5,1.8)--(8,3)--(11,1)--(11.5,1.5)--(11.5,2.2)--(10.5,1.7)-- (9,3)--(12,2.7)--(12.5,1)--(13.5,0.5)--(12,0.5)--(11,0)--(10,0)--(9,1.8)--(8,0)--(4.5,0.5)--(5,1.3)--(3.5,2.2)--(4,1.5)--(2.5,0)--(0,1)--(0,0);
\draw[very thick, black, line cap=round, fill=lightgray!50] (7.5,1.8)--(8,3)--(11,1)--(11.5,1.5)--(11.5,2.2)--(10.5,1.7)-- (9,3)--(12,2.7)--(12.5,1)--(13.5,0.5)--(12,0.5)--(11,0)--(10,0)--(9,1.8)--(8,0);
%
\draw[line width=3pt, black, line cap=round] (8,0)--(4.5,0.5);
%
%
%
\draw[very thick, cyan, dotted, line cap=round] (7.5,1.8)--(8,0);
%
\pgfmathsetmacro{\ex}{8}
\pgfmathsetmacro{\ey}{0}
\fill[fill = red, opacity = 0.4] (\ex,\ey) ++(-8:.5) arc (-8:172:.5);
\draw[thick, fill=red]  (8,0) circle (0.15);
%
%
%
\node[above] at (7.4,2.1) {$w$};
\node[below] at (8,-0.2) {$u$};
\node[left] at (7.7,0.5) {${\tt g}$};
\node[below] at (4.5,0.4) {$v$};
\node[] at (2.5,1.5) {$P_1$};
\node[] at (10.5,0.65) {$P_2$};
\end{tikzpicture}
\\
\scriptstyle{\textup{{ Fig.\,26: (E1) -- Convex endpoints on \(s\); diagonal decomposes \(P\) into odd-sided polygons}}}
\end{array}
\end{align*}

Place an entire boundary half-guard \({\tt g}\) for \(P\) at \(u\), with \(\partial H_{{\tt g}}\) colinear to \(s\), as in Fig.\,26. We have then that \({\tt g}\) is \(s\)-aligned. We further have that \({\tt g}\) is an entire boundary half-guard in both of \(P_1\) and \(P_2\). If either of these polygons is a triangle, then they are monitored by \({\tt g}\) by Lemma~\ref{Tri}. Let \(i \in \{1,2\}\). If \(n_i = 3\), then set \({\tt G}_i = \varnothing\). If \(n_i \geq 5\), then by the induction assumption we may let \({\tt G}_i\) be a CHG set for \(P_i\) such that \(|{\tt G}_i| = (n_i -3)/2\). Then we have by Lemma~\ref{IncludeGuard} that \({\tt G}_i \cup \{{\tt g}\}\) is a CHG set for \(P_i\). 
Then we have that \(P\) is monitored by the set \({\tt G} = {\tt G}_1 \cup { \{ \tt g}\} \cup {\tt G}_2\) of size
\begin{align*}
|{\tt G}|  = 
\frac{n_1 -3}{2} + 1 + \frac{n_2 -3}{2} = \frac{n_1 + n_2 - 4}{2}  = \frac{n-2}{2},
\end{align*}
and \(V_{\tt G}\) is a connected graph by Lemma~\ref{JoinDigraphs}. Therefore \({\tt G}\) is a CHG set for \(P\) which satisfies claim (ii). 

In cases (E2--E4) we then assume, without loss of generality, that the endpoint \(v\) of \(s = \overline{uv}\) is a reflex vertex in \(P\). Then the ray \(\vv{uv}\) passes into the interior of \(P\) and first intersects \(\partial P\) at some point \(b\). We may decompose \(P\) along the \(\vv{uv}\)-cut \(\overline{ub}\) into two polygons \(P_1\) and \(P_2\), of of size \(n_1\) and \(n_2\) respectively. We have that \(n_1, n_2\) are either both odd (E2), both even (E3), or opposite parity (E4), and we approach these cases separately below.

(E2) Assume \(n_1, n_2\) are odd. Then it follows that \(b\) is on the relative interior of a side of \(P\), and \(n_1 + n_2 = n+2\). Assume moreover that \(n_1, n_2\) are odd. Then we place an entire boundary half-guard \({\tt g}\) for \(P\) at \(b\), as in Fig.\,27. Then we have that \({\tt g}\) is \(s\)-aligned. With CHG sets \({\tt G}_1\), \({\tt G}_2\) for \(P_1\), \(P_2\) selected via the induction assumption, we may construct an \(s\)-aligned CHG set \({\tt G} = {\tt G}_1 \cup \{ {\tt g}\} \cup {\tt G}_2\) for \(P\) of cardinality \((n-2)/2\), with the argument proceeding exactly as in the case (E1).
\begin{align*}
\begin{array}{c}
\begin{tikzpicture}[scale=0.8]
\draw[very thick, black, line cap=round, fill=lightgray!10] (0,0)--(-1,1.5)--(1,3)--(1.5,2)--(2.3,3)--(5.7,3)--(5.35,2.4)--(5.9,1.5)--(6.5,2.7)--(7.5,1.8)--(8,3)--(11,1)--(11.5,1.5)--(11.5,2.2)--(10.5,1.7)-- (9,3)--(12,2.7)--(12.5,1)--(13.5,0.5)--(12,0.5)--(11,0)--(10,0)--(9,1.8)--(8,0)--(4.5,0.5)--(5,1.3)--(3.5,2.2)--(4,1.5)--(2.5,0)--(0,1)--(0,0);
\draw[very thick, black, line cap=round, fill=lightgray!50] (8,3)--(11,1)--(11.5,1.5)--(11.5,2.2)--(10.5,1.7)-- (9,3)--(12,2.7)--(12.5,1)--(13.5,0.5)--(12,0.5)--(11,0)--(10,0)--(9,1.8)--(8,0)--(6.9,0.15);
%
%
%
%
\draw[very thick, cyan, dotted, line cap=round] (6.9,0.2)--(8,3);
\draw[line width=3pt, black, line cap=round] (7.5,1.8)--(8,3);
%
%
%
%
\pgfmathsetmacro{\ex}{6.9}
\pgfmathsetmacro{\ey}{0.15}
\fill[fill = red, opacity = 0.4] (\ex,\ey) ++(-8:.5) arc (-8:172:.5);
\draw[thick, fill=red]  (6.9,0.15) circle (0.15);
\node[above] at (7.4,2.0) {$v$};
\node[above] at (8,3.1) {$u$};
\node[below] at (6.9,0.05) {${\tt g}$};
\node[] at (2.5,1.5) {$P_1$};
\node[] at (10.5,0.65) {$P_2$};
\end{tikzpicture}
\\
\scriptstyle{\textup{{ Fig.\,27: (E2) -- The \(\vv{uv}\)-cut partitions \(P\) into odd-sided polygons}}}
\end{array}
\end{align*}

(E3)  Assume \(n_1, n_2\) are even. Then it follows that \(b\) is on the relative interior of a side of \(P\), and \(n_1 + n_2 = n+2\). We may assume that we have labeled the polygons \(P_1\), \(P_2\) such that \(\overline{ub}\) is a side in \(P_1\) and \(\overline{vb}\) is a side in \(P_2\). By the induction assumption, there exists an \(\overline{ub}\)-aligned CHG set \({\tt G}_1\) for \(P_1\) such that \(|{\tt G}_1| = (n_1 - 2)/2\). Let \({\tt g}_1\) be the \(\overline{ub}\)-aligned half-guard in \({\tt G}_1\). Note that since \({\tt g}_1\) is \(\overline{ub}\)-aligned, we have that \({\tt g}_1\) is \(\overline{uv}\)- and \(\overline{vb}\)-aligned as well. There also exists by induction a \(\overline{vb}\)-aligned CHG set \({\tt G}_2\) for \(P_2\) such that \(|{\tt G}_2| = (n_2 - 2)/2\). Let \({\tt g}_2\) be the \(\overline{vb}\)-aligned half-guard in \({\tt G}_2\), as in Fig.\,28.
\begin{align*}
\begin{array}{c}
\begin{tikzpicture}[scale=0.8]
\draw[very thick, black, line cap=round, fill=lightgray!10] (0,0)--(-1,1.5)--(1,3)--(1.5,2)--(2.3,3)--(5.7,3)--(5.35,2.4)--(5.9,1.5)--(6.5,2.7)--(7.5,1.8)--(8,3)--(11,1)--(11.5,1.5)--(11.5,2.2)--(10.5,1.7)-- (9,3)--(12,2.7)--(12.5,1)--(13.5,0.5)--(12,0.5)--(11,0)--(10,0)--(9,1.8)--(8,0)--(4.5,0.5)--(5,1.3)--(3.5,2.2)--(4,1.5)--(2.5,0)--(0,1)--(0,0);
\draw[very thick, black, line cap=round, fill=lightgray!50] (5.9,1.5)--(6.5,2.7)--(7.5,1.8)--(8,3)--(11,1)--(11.5,1.5)--(11.5,2.2)--(10.5,1.7)-- (9,3)--(12,2.7)--(12.5,1)--(13.5,0.5)--(12,0.5)--(11,0)--(10,0)--(9,1.8)--(8,0)--(6.7,0.2);
%
%
%
\draw[line width=3pt, black, line cap=round] (5.35,2.4)--(5.9,1.5);
%
\draw[very thick, cyan, dotted, line cap=round] (5,3)--(6.7,0.2);
\draw[line width=3pt, black, line cap=round] (5.35,2.4)--(5.9,1.5);
%
%
%
%
\pgfmathsetmacro{\ex}{6.35}
\pgfmathsetmacro{\ey}{0.8}
\fill[fill = red, opacity = 0.4] (\ex,\ey) ++(-57:.5) arc (-57:123:.5);
\draw[thick, fill=red]  (6.35,0.8) circle (0.15);
\pgfmathsetmacro{\ex}{5}
\pgfmathsetmacro{\ey}{3}
\fill[fill = red, opacity = 0.4] (\ex,\ey) ++(180:.5) arc (180:360:.5);
\draw[thick, fill=red] (5,3) circle (0.15);
\node[below] at (6.7,0.1) {$b$};
\node[right] at (5.4,2.4) {$u$};
\node[right] at (5.95,1.55) {$v$};
\node[] at (2.5,1.5) {$P_1$};
\node[] at (10.5,0.65) {$P_2$};
\node[] at (4.5,2.5) {${\tt g}_1$};
\node[right] at (6.8,0.8) {${\tt g}_2$};
\end{tikzpicture}
\\
\scriptstyle{\textup{{ Fig.\,28: (E3) -- The \(\vv{uv}\)-cut partitions \(P\) into even-sided polygons}}}
\end{array}
\end{align*}
It follows then by Lemma~\ref{SeeEachOther} that \({\tt G} = {\tt G}_1 \cup {\tt G}_2\) is an \(s\)-aligned CHG set for \(P\). Since
\begin{align*}
|{\tt G}| = \frac{n_1 -2}{2} + \frac{n_2 -2}{2} = \frac{n_1 + n_2 - 4}{2} = \frac{n-2}{2},
\end{align*}
we have that \({\tt G}\) satisfies claim (ii).

(E4) Now assume that \(n_1, n_2\) have opposite parity. Then it follows that \(b\) is a vertex of \(P\), and \(n_1 + n_2 = n+1\). We assume without loss of generality that \(n_1\) is odd and \(n_2\) is even. By the induction assumption, there exists a 
CHG set \({\tt G}_1\) for \(P_1\) such that \(|{\tt G}_1| =(n_1-3)/2 \),
and there exists a \(\overline{vb}\)-aligned 
CHG set \({\tt G}_2\) for \(P_2\) such that \(|{\tt G}_2| =(n_2-2)/2 \).
Let \({\tt g}_2\) be the \(\overline{vb}\)-aligned guard in \({\tt G}_2\), as in Fig.\,29.
\begin{align*}
\begin{array}{cc}
\begin{tikzpicture}[scale=0.8]
\draw[very thick, black, line cap=round, fill=lightgray!10] (0,0)--(-1,1.5)--(1,3)--(1.5,2)--(2.3,3)--(5.7,3)--(5.35,2.4)--(5.9,1.5)--(6.5,2.7)--(7.5,1.8)--(8,3)--(11,1)--(11.5,1.5)--(11.5,2.2)--(10.5,1.7)-- (9,3)--(12,2.7)--(12.5,1)--(13.5,0.5)--(12,0.5)--(11,0)--(10,0)--(9,1.8)--(8,0)--(4.5,0.5)--(5,1.3)--(3.5,2.2)--(4,1.5)--(2.5,0)--(0,1)--(0,0);
\draw[very thick, black, line cap=round, fill=lightgray!50] (1.5,2)--(2.3,3)--(5.7,3)--(5.35,2.4)--(5.9,1.5)--(6.5,2.7)--(7.5,1.8)--(8,3)--(11,1)--(11.5,1.5)--(11.5,2.2)--(10.5,1.7)-- (9,3)--(12,2.7)--(12.5,1)--(13.5,0.5)--(12,0.5)--(11,0)--(10,0)--(9,1.8)--(8,0)--(4.5,0.5)--(5,1.3)--(3.5,2.2)--(4,1.5)--(2.5,0);
%
%
%
%
\draw[very thick, cyan, dotted, line cap=round] (1,3)--(2.5,0);
\draw[line width=3pt, black, line cap=round] (1,3)--(1.5,2);
%
%
\pgfmathsetmacro{\ex}{2}
\pgfmathsetmacro{\ey}{1}
\fill[fill = red, opacity = 0.4] (\ex,\ey) ++(-65:.5) arc (-65:115:.5);
\draw[thick, fill=red]  (2,1) circle (0.15);
%
%
\pgfmathsetmacro{\ex}{1.5}
\pgfmathsetmacro{\ey}{2}
\fill[fill = red, opacity = 0.4] (\ex,\ey) ++(115:.5) arc (115:295:.5);
\draw[thick, fill=red]  (1.5,2) circle (0.15);
\node[below] at (2.5,-0.1) {$b$};
\node[above] at (1,3) {$u$};
\node[above] at (1.6,2.25) {$v$};
\node[] at (0.5,1.5) {$P_1$};
\node[] at (6.8,1.1) {$P_2$};
\node[right] at (1.6,2) {${\tt g}$};
\node[right] at (2.4,1) {${\tt g}_2$};
\end{tikzpicture}
\\
\scriptstyle{\textup{{ Fig.\,29: (E4) -- The \(\vv{uv}\)-cut partitions \(P\) into even- and odd-sided polygons}}}
\end{array}
\end{align*}
Let \({\tt g}\) be a half-guard such that \(x_{\tt g} = v\), with half-plane \(H_{\tt g}\) oriented such that \(\partial H_{\tt g}\) is colinear to \(\overline{ub}\) and \({\tt g}\) is an entire boundary half-guard for \(P_1\). Note then that \({\tt g}\) sees all of \(\overline{uv}\) and \(\overline{vb}\) as well, so that \({\tt g}\) is \(\overline{uv}\)-aligned and \(\overline{vb}\)-aligned. Then it follows by Lemma~\ref{IncludeGuard} that \(V_{{\tt G}_1 \cup \{{\tt g}\}}\) is  connected, and then by Lemma~\ref{SeeEachOther} that \({\tt G} = {\tt G}_1 \cup \{{\tt g}\} \cup {\tt G}_2\) is a CHG set for \(P\). Since \({\tt g}\) is \(s\)-aligned and
\begin{align*}
|{\tt G}| = \frac{n_1-3}{2} + 1 + \frac{n_2 -2}{2} = \frac{n_1 + n_2 - 3}{2} = \frac{n-2}{2},
\end{align*}
we have that \({\tt G}\) satisfies claim (ii).

This concludes the proof of the induction step when \(n\) is even.

{\em Assume \(n\) is odd}. By Lemma~\ref{PentDecomp} there exists \(0 \leq k\leq  5\) and a decomposition \(\mathcal{D} = \{P_0, P_1, \ldots, P_k\}\) of \(P\) such that \(P_0\) is a pentagon, \(P_1, \ldots, P_k\) are even-sided polygons, \(P_i\) and \(P_j\) have no sides in common for \(1 \leq i, j \leq k\), \(i \neq j\), and each shares exactly one side with \(P_0\). Letting \(n_i \) represent the number of sides of \(P_i\), we have
\(
n_1 + \cdots + n_k = n + 2k - 5.
\)

By Lemma~\ref{Pent},
there exists an entire boundary half-guard \({\tt g}\) for \(P_0\) such that
\(\{ {\tt g}\}\) monitors \(P_0\), and \(x_{\tt g}\) is not a vertex of \(P_0\), as in Fig.\,30.
 If \(k = 0\), then \(P = P_0\), so \(n=5\) and \(\{{\tt g}\}\) satisfies (i). Thus we assume \(k \geq 1\). Let \(i \in \{1, \ldots, k\}\). Assume that \(s = \overline{uv}\) is the edge shared by \(P_0\) and \(P_i\). We now construct a CHG set for \(P_i\) via the induction assumption.

 \begin{align*}
\begin{array}{cc}
\begin{tikzpicture}[scale=0.8]
\draw[very thick, black, line cap=round, fill=lightgray!10] (3,0)--(1,4)--(3,7)--(3,3)--(6,3)--(5,1)--(8,2)--(5,6)--(13,7)--(8.1,5)--(12.5,4.5)--(12,6)--(15,3)--(10.5,3)--(10,2)--(13,1)--(10,0)--(3,0);
\fill[ line cap=round, fill=lightgray!50!] (5,1)--(8,2)--(8,5)--(10,2)--(13,1)--(5,1);
\draw[very thick, black, line cap=round] (3,0)--(1,4)--(3,7)--(3,3)--(6,3)--(5,1)--(8,2)--(5,6)--(13,7)--(8.1,5)--(12.5,4.5)--(12,6)--(15,3)--(10.5,3)--(10,2)--(13,1)--(10,0)--(3,0);
%
%
%
%
\draw[very thick, cyan, dotted, line cap=round] (2.5,1)--(13,1);
\draw[very thick, cyan, dotted, line cap=round] (8,1)--(8,6.3);
\draw[very thick, cyan, dotted, line cap=round] (10,2)--(8,5);
\draw[very thick, cyan, dotted, line cap=round] (5,1)--(1.4,4.6);
\draw[very thick, cyan, dotted, line cap=round] (8,5)--(15,3);
\draw[very thick, cyan, dotted, line cap=round] (12.5,4.5)--(12.8,3.6);
\draw[very thick, cyan, dotted, line cap=round] (10.5,3)--(11.05,4.15);
\draw[very thick, cyan, dotted, line cap=round] (10,2)--(8,1);
%
\pgfmathsetmacro{\ex}{3.4}
\pgfmathsetmacro{\ey}{1}
\fill[fill = red, opacity = 0.4] (\ex,\ey) ++(180:.5) arc (180:360:.5);
\draw[thick, fill=red]  (3.4,1) circle (0.15);
\pgfmathsetmacro{\ex}{1.4}
\pgfmathsetmacro{\ey}{4.6}
\fill[fill = red, opacity = 0.4] (\ex,\ey) ++(237:.5) arc (237:417:.5);
\draw[thick, fill=red]  (1.4,4.6) circle (0.15);
\pgfmathsetmacro{\ex}{5}
\pgfmathsetmacro{\ey}{1}
\fill[fill = red, opacity = 0.4] (\ex,\ey) ++(0:.5) arc (0:180:.5);
\draw[thick, fill=red]  (5,1) circle (0.15);
\pgfmathsetmacro{\ex}{8}
\pgfmathsetmacro{\ey}{1}
\fill[fill = red, opacity = 0.4] (\ex,\ey) ++(0:.5) arc (0:180:.5);
\draw[thick, fill=red]  (8,1) circle (0.15);
\pgfmathsetmacro{\ex}{8}
\pgfmathsetmacro{\ey}{6.35}
\fill[fill = red, opacity = 0.4] (\ex,\ey) ++(187:.5) arc (187:367:.5);
\draw[thick, fill=red]  (8,6.35) circle (0.15);
\pgfmathsetmacro{\ex}{11.04}
\pgfmathsetmacro{\ey}{4.14}
\fill[fill = red, opacity = 0.4] (\ex,\ey) ++(164:.5) arc (164:344:.5);
\draw[thick, fill=red]  (11.04,4.14) circle (0.15);
\pgfmathsetmacro{\ex}{12.8}
\pgfmathsetmacro{\ey}{3.62}
\fill[fill = red, opacity = 0.4] (\ex,\ey) ++(-16:.5) arc (-16:164:.5);
\draw[thick, fill=red]  (12.8,3.62) circle (0.15);
%
%
%
\node[below] at (8.75,2.8) {$P_0$};
\node[above] at (9.7,3.4) {$P_1$};
\node[above] at (6.9,4.6) {$P_3$};
\node[] at (3,2) {$P_2$};
\node[below] at (8,0.8) {${\tt g}$};
\node[] at (0.9,4.8) {${\tt g}_1^2$};
\node[below] at (4,1) {${\tt g}_2^2$};
\node[below] at (5.3,1) {${\tt g}_3^2$};
\node[] at (8,7) {${\tt g}_1^3$};
\node[] at (11.4,3.5) {${\tt g}_1^1$};
\node[] at (13,4.4) {${\tt g}_2^1$};
\end{tikzpicture}
\\
\scriptstyle{\textup{{ Fig.\,30: Decomposition of \(P\) into polygons \(P_0, \ldots, P_3\), with \({\tt g}\) and CHG sets \({\tt G}_i = \{g^i_1, \ldots, g^i_{(n_i -2)/2}\}\)}}}
\end{array}
\end{align*}

First assume that \(x_{\tt g}\) is not colinear to \(s\) (as is the case with \(P_1\) in Fig.\,30). Then by Lemma~\ref{AddTri}, \({\tt g}\) is an entire boundary half-guard in \(T = [x_{\tt g}, u,v] \subseteq P_0\). Then \(P_i' = P_i \cup T\) is a polygon with \(n_i + 1\) sides, and \({\tt g}\) is an entire boundary half-guard in \(P_i'\). By the induction assumption, there exists a CHG set \({\tt G}_i\) for \(P_i'\) of size \((n_i -2)/2\). By Lemma~\ref{IncludeGuard}, we have that \( V_{{\tt G}_i \cup \{{\tt g}\}}\) is  connected.

Next assume that \(x_{\tt g}\) is colinear to \(s\). If \(x_{\tt g} \in s\) (as is the case with \(P_2\) in Fig.\,30), then, since \(x_{\tt g}\) is in the relative interior of \(s\) and an entire boundary half-guard for \(s\), it follows that \(\partial H_{\tt g}\) is colinear to \(s\). If \(x_{\tt g} \notin s\) (as is the case with \(P_3\) in Fig.\,30), we nonetheless have that \({\tt g}\) sees all of \(s\) since \(\{{\tt g}\}\) monitors \(P_0\). Therefore, in either case, \({\tt g}\) is \(s\)-aligned in \(P_0\). By the induction assumption, there exists an \(s\)-aligned CHG set \({\tt G}_i\) for \(P_i\) of size \((n_i - 2)/2\). By Lemma~\ref{SeeEachOther}, it follows then that \({\tt G}_i \cup \{{\tt g}\}\) is a CHG set for \(P_i \cup P_0\), so \(V_{{\tt G}_i \cup \{{\tt g}\}}\) is  connected.

With half-guard sets \({\tt G}_i\), for \(i \in \{1, \ldots, k\}\) constructed as above, let 
\({\tt G} := {\tt G}_1 \cup \cdots \cup {\tt G}_k \cup \{{\tt g}\}\). Each \(V_{{\tt G}_i \cup \{{\tt g}\}}\) graph is connected, so it follows by 
inductive application of Lemma~\ref{JoinDigraphs} that \(V_{{\tt G}}\) is connected. As \({\tt G}_i\) monitors \(P_i\) for all \(i\), and \(\{{\tt g}\}\) monitors \(P_0\), we have that \({\tt G}\) monitors \(P\) as well. Finally, we note that
\begin{align*}
|{\tt G}| = 1 + \sum_{i=1}^k |{\tt G}_i| = 1 + \sum_{i=1}^k \frac{n_i -2}{2} = \frac{n_1 + \cdots + m_k - 2k + 2}{2} = \frac{n-3}{2},
\end{align*}
so \({\tt G}\) is a CHG set for \(P\) that satisfies condition (i). This completes the induction step, and the proof.
\end{proof}

\begin{Corollary}\label{MainCor}
For all \(n \geq 4\), we have
\(
\textup{\(\cg(n,180^\circ)\)} = \lfloor n/2\rfloor - 1
\).
\end{Corollary}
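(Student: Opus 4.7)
The plan is to deduce the corollary directly from Theorem~\ref{MainThm} (for the upper bound) together with Theorem~\ref{HPthm}(i) (for the lower bound), using the trivial inequality \(\textup{cg}(P,180^\circ) \geq \textup{cg}(P,360^\circ)\) remarked on earlier.

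For the upper bound, I would observe that Theorem~\ref{MainThm} produces, for every \(n\)-sided polygon \(P\) with \(n \geq 4\), a CHG set of size \((n-3)/2\) when \(n\) is odd and \((n-2)/2\) when \(n\) is even. In both cases this cardinality equals \(\lfloor n/2 \rfloor - 1\), so \(\textup{cg}(P,180^\circ) \leq \lfloor n/2 \rfloor - 1\) for every such \(P\), and taking the maximum over all \(n\)-sided polygons yields \(\textup{cg}(n,180^\circ) \leq \lfloor n/2 \rfloor - 1\).

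For the lower bound, I would invoke Theorem~\ref{HPthm}(i) to obtain, for each \(n \geq 4\), an \(n\)-sided polygon \(P\) with \(\textup{cg}(P,360^\circ) = \lfloor n/2 \rfloor - 1\). Since any CHG set for \(P\) gives a CFG set for \(P\) of the same cardinality by replacing each half-guard by a full-guard at the same location, we have \(\textup{cg}(P,180^\circ) \geq \textup{cg}(P,360^\circ) = \lfloor n/2 \rfloor - 1\), and therefore \(\textup{cg}(n,180^\circ) \geq \lfloor n/2 \rfloor - 1\).

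Combining the two inequalities gives equality. There is no real obstacle here: all the genuine work has already been carried out in Theorem~\ref{MainThm}, and the lower bound is inherited for free from the full-guard version via the simple replacement argument.
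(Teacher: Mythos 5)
Your proposal is correct and follows essentially the same route as the paper: the upper bound is read off from Theorem~\ref{MainThm} after observing that \((n-2)/2\) and \((n-3)/2\) both equal \(\lfloor n/2\rfloor-1\) in the respective parity cases, and the lower bound is inherited from Theorem~\ref{HPthm}(i) via the inequality \(\cg(n,180^\circ)\geq\cg(n,360^\circ)\). No gaps.
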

\begin{proof}
Noting that
\begin{align*}
\lfloor n/2 \rfloor -1 
=
\begin{cases}
(n-2)/2 & \textup{if \(n\) is even};\\
(n-3)/2 & \textup{if \(n\) is odd},\\
\end{cases}
\end{align*}
we have by Theorem~\ref{MainThm} that for every polygon with \(n\) sides, there exists a CHG set of cardinality \(\lfloor n/2 \rfloor -1\), so \(\cg(n,180^\circ) \leq \lfloor n/2\rfloor - 1\). We additionally have by Theorem~\ref{HPthm} that \(\cg(n,180^\circ) \geq \cg(n,360^\circ) =  \lfloor n/2\rfloor - 1\), so the claim follows.
\end{proof}

\section{Cooperative half-guard sets for orthogonal polygons}\label{mainsecOrth}

In this section we show that the result of Theorem~\ref{MainThm} may be improved upon slightly when \(P\) is an orthogonal polygon.

\begin{Theorem}\label{MainThmOrth}
Let \(P\) be an orthogonal polygon with \(n \geq 6\) sides. Then there exists a CHG set \({\tt G}\) for \(P\) such that \(|{\tt G}| =n/2 -2.\) 
\end{Theorem}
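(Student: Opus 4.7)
The approach is to induct on $n$, closely following the structure of the proof of Theorem~\ref{MainThm} but exploiting two features of orthogonal polygons to save one guard: (a) every orthogonal polygon with $n \geq 6$ has at least $(n-4)/2 \geq 1$ reflex vertices, and (b) a $\vv{uv}$-cut from a reflex vertex $v$ in an orthogonal polygon is axis-aligned, so when its foot $b$ lies in the interior of a side of $P$ it meets that side perpendicularly, whereupon both sub-polygons $P_1, P_2$ (with $n_1 + n_2 = n+2$) remain orthogonal and even-sided. In the spirit of Theorem~\ref{MainThm}(ii), I would strengthen the inductive claim: for any side $s$ of $P$, there exists an $s$-aligned CHG set of size $n/2 - 2$.

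For the base case $n = 6$, the polygon $P$ is an L-shape with a unique reflex vertex $v$. Choosing $u$ adjacent to $v$ so that the $\vv{uv}$-cut meets the interior of a side of $P$ at a point $b$, the cut decomposes $P$ into two rectangles sharing the cut. A single half-guard placed at $b$ with $\partial H_{\tt g}$ colinear to the side of $P$ through $b$ monitors all of $P$, since the entire polygon lies in one of the two half-planes determined by that side. A short case analysis verifies that for each side $s$ of the L-shape, such a single-guard monitor can be repositioned to an $s$-aligned placement (either along $s$ itself or along a side parallel to $s$).

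For the inductive step with $n \geq 8$, select a reflex vertex $v$ and adjacent vertex $u$ so that the $\vv{uv}$-cut produces orthogonal pieces $P_1, P_2$, both even-sided, arranged (without loss of generality) so that $s$ is a side of $P_2$. The decisive case is $n_1 = 4$, i.e., $P_1$ is a rectangle: Lemma~\ref{Quad} supplies a single half-guard monitoring $P_1$ aligned with the shared cut, while the strengthened inductive hypothesis applied to $P_2$ (which has $n_2 = n-2 \geq 6$ orthogonal sides) yields a CHG set of size $n_2/2 - 2 = n/2 - 3$ aligned with both the cut and $s$. Merging via Lemma~\ref{SeeEachOther} gives an $s$-aligned CHG set for $P$ of size $1 + (n/2 - 3) = n/2 - 2$, as required. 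The complementary case in which both $n_1, n_2 \geq 6$ yields only $(n_1/2 - 2) + (n_2/2 - 2) = n/2 - 3$ guards by pure inductive combination—better than needed, so we have slack.

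The principal obstacle is demonstrating that an appropriate cut always exists: one must find a reflex vertex $v$ and adjacent $u$ such that the $\vv{uv}$-cut lands in the interior of a side (to guarantee orthogonality of the pieces) while being compatible with the alignment requirement on $s$. This will likely require an auxiliary decomposition lemma for orthogonal polygons, analogous to Corollaries~\ref{OddOddDiag} or~\ref{QuadPent}, asserting either the existence of a rectangle ear disjoint from $s$ or a clean two-piece orthogonal decomposition with both pieces of side count at least $6$. The degenerate situation in which every available $\vv{uv}$-cut terminates at a vertex of $P$ (producing non-orthogonal sub-pieces) would need a fallback mechanism, for instance by using the perpendicular axis-aligned cut from the same reflex vertex or by selecting a different reflex vertex altogether.
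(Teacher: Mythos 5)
There is a genuine gap, and it comes from the strengthening you introduce. Your proposed inductive claim---that for \emph{every} side $s$ of an orthogonal $P$ there is an $s$-aligned CHG set of size $n/2-2$---is already false at the base case $n=6$. Take the L-shape $P=[(0,0),(2,0),(2,1),(1,1),(1,2),(0,2)]$ and let $s$ be the side from $(2,0)$ to $(2,1)$. An $s$-aligned guard must lie on the line $x=2$, hence on $s$ itself, but no point of $s$ sees the point $(0.99,1.99)$ in the upper arm (the segment leaves $P$ just to the right of $x=1$ at height above $1$), so no single $s$-aligned half-guard monitors $P$. Your inductive step also quietly asks the hypothesis for a set for $P_2$ ``aligned with both the cut and $s$,'' which even the strengthened hypothesis would not supply (it gives alignment with one chosen side only). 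Finally, the case you flag as needing a ``fallback mechanism''---every $\vv{uv}$-cut terminating at a vertex $b$ of $P$---is exactly where the real work lies, and your diagnosis of it is off: in an orthogonal polygon such a $b$ is necessarily a reflex vertex, the cut direction is collinear with one of the two edges at $b$, and \emph{both} pieces remain orthogonal; what changes is the count, $n_1+n_2=n$ rather than $n+2$.

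The paper's proof avoids all of this by not strengthening the statement. When $b$ is interior to a side, it places one entire boundary half-guard at $b$; this guard is entire in both pieces, so Lemma~\ref{IncludeGuard} attaches it to each inductively obtained CHG set with no alignment hypothesis needed, and $n_1+n_2=n+2$ gives $(n_1/2-2)+1+(n_2/2-2)=n/2-2$. When $b$ is a reflex vertex, the identity $n_1+n_2=n$ leaves room for \emph{two} guards placed on the relative interior of the cut $\overline{vb}$, one entire in each piece; these are $\overline{vb}$-aligned, so Lemma~\ref{SeeEachOther} merges the two sides, and $(n_1/2-2)+2+(n_2/2-2)=n/2-2$. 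I would recommend dropping the $s$-aligned strengthening entirely and working out the vertex-foot case along these lines rather than deferring it.
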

\begin{proof}
We go by induction on \(n\). Let \(P\) be an orthogonal polygon with \(n \geq 6\) sides. Then \(P\) has a reflex vertex \(v\). Let \(u\) be a vertex adjacent to \(v\). The ray \(\vv{uv}\) passes into the interior of \(P\) and first intersects \(\partial P\) at some point \(b\). Then we may partition \(P\) along the \(\vv{uv}\)-cut \(\overline{vb}\) into two orthogonal polygons \(P_1\), \(P_2\), with sides numbering \(n_1, n_2\) respectively. 

First, note that if \(n = 6\), then \(n_1 = n_2 = 4\), and \(b\) must be on the relative interior of an edge of \(P\). Place an entire boundary half-guard \({\tt g}\) for \(P\) at \(b\). Then \(\{{\tt g}\}\) monitors \(P = P_1 \cup P_2\) by Lemma~\ref{Tri}. This serves as the base case for our induction argument, so we now assume that \(n >6\) and that the claim holds for \(n' < n\). The point \(b\) described above must be on the relative interior of an edge of \(P\) or at a reflex vertex of \(P\). We consider these cases separately.

Assume \(b\) is on the relative interior of an edge of \(P\). Then \(n_1 + n_2 = n+2\). Place an entire boundary half-guard \({\tt g}\) for \(P\) at \(b\). Then \({\tt g}\) is an entire boundary half-guard in both \(P_1\) and \(P_2\). If \(n_i = 4\) for \(i \in \{1,2\}\), then \(P_i\) is a rectangle, and thus \(\{ \tt g\}\) monitors \(P_i\) by Lemma~\ref{Tri}. Now assume that \(n > 6\). Let \(i \in \{1,2\}\). If \(n_i = 4\), set \({\tt G}_i = \varnothing\). Otherwise by the induction assumption, let \({\tt G}_i\) be a CHG set for \(P_i\) of size \(n_i/2-2\). Note that by Lemma~\ref{IncludeGuard}, \({ \tt G}_i \cup \{\tt g\}\) is a CHG set for \(P_i\). Letting \({\tt G} = {\tt G}_1 \cup \{ \tt g\} \cup {\tt G}_2\) we have by Lemma~\ref{JoinDigraphs} that \(V_{\tt G}\) is connected and \({\tt G}\) monitors \(P = P_1 \cup P_2\), so \({\tt G}\) is a CHG set for \(P\) of size
\begin{align*}
|{\tt G}| = \left(\frac{n_1}{2}-2\right) + 1 + \left(\frac{n_2}{2}-2\right) = \frac{n_1 + n_2 -2}{2} -2= \frac{n}{2}-2,
\end{align*}
as required.

Assume \(b\) is a reflex vertex in \(P\). Then \(n_1 + n_2 = n\), so we have \(n >6\). Let \(i \in \{1,2\}\), and place an entire boundary half-guard \({\tt g}_i\) for \(P_i\) on the relative interior of the edge \(\overline{vb}\). As above, if \(n_i = 4\), then \(\{{\tt g}\}\) monitors \(P_i\), so if \(n_i = 4\), set \({\tt G}_i = \varnothing\). Otherwise by the induction assumption, let \({\tt G}_i\) be a CHG set for \(P_i\) of size \(n_i/2-2\). Note that by Lemma~\ref{IncludeGuard}, \({ \tt G}_i \cup \{{\tt g}_i\}\) is a \(\overline{vb}\)-aligned CHG set for \(P_i\). It follows then by Lemma~\ref{SeeEachOther} that \({\tt G} = {\tt G}_1 \cup \{ {\tt g}_1\} \cup \{ {\tt g}_2\} \cup {\tt G}_2\) is a CHG set for \(P = P_1 \cup P_2\) of size
\begin{align*}
|{\tt G}| =\left( \frac{n_1}{2}-2 \right)+ 2 + \left( \frac{n_2}{2}-2 \right) = \frac{n_1 + n_2}{2} -2= \frac{n}{2}-2,
\end{align*}
as required. This completes the induction step, and the proof.
\end{proof}

 \begin{align*}
\begin{array}{cc}
\begin{tikzpicture}[scale=0.8]
\draw[very thick, black, line cap=round, fill=lightgray!10] (1,0)--(1,1)--(3,1)--(3,4)--(2,4)--(2,2)--(0,2)--(0,5)--(5,5)--(5,6)--(12,6)--(12,5)--(9,5)--(9,4)--(15,4)--(15,1)--(12,1)--(12,0)--(9,0)--(9,1.75)--(10,1.75)--(10,3)--(6,3)--(6,1)--(8,1)--(8,0)--(1,0);
%
%
%
%
\draw[very thick, cyan, dotted, line cap=round] (3,1)--(6,1);
\draw[very thick, cyan, dotted, line cap=round] (3,3)--(6,3);
\draw[very thick, cyan, dotted, line cap=round] (0,4)--(2,4);
\draw[very thick, cyan, dotted, line cap=round] (5,5)--(9,5);
\draw[very thick, cyan, dotted, line cap=round] (3,4)--(3,5);
\draw[very thick, cyan, dotted, line cap=round] (9,3)--(9,4);
\draw[very thick, cyan, dotted, line cap=round] (10,3)--(15,3);
\draw[very thick, cyan, dotted, line cap=round] (12,0)--(12,3);
\draw[very thick, cyan, dotted, line cap=round] (10,1.75)--(12,1.75);
%
%
\pgfmathsetmacro{\ex}{0}
\pgfmathsetmacro{\ey}{4}
\fill[fill = red, opacity = 0.4] (\ex,\ey) ++(-90:.5) arc (-90:90:.5);
\draw[thick, fill=red]  (\ex,\ey) circle (0.15);
\pgfmathsetmacro{\ex}{3}
\pgfmathsetmacro{\ey}{3}
\fill[fill = red, opacity = 0.4] (\ex,\ey) ++(-90:.5) arc (-90:90:.5);
\draw[thick, fill=red]  (\ex,\ey) circle (0.15);
\pgfmathsetmacro{\ex}{5.25}
\pgfmathsetmacro{\ey}{1}
\fill[fill = red, opacity = 0.4] (\ex,\ey) ++(0:.5) arc (0:180:.5);
\draw[thick, fill=red]  (\ex,\ey) circle (0.15);
\pgfmathsetmacro{\ex}{6}
\pgfmathsetmacro{\ey}{5}
\fill[fill = red, opacity = 0.4] (\ex,\ey) ++(0:.5) arc (0:180:.5);
\draw[thick, fill=red]  (\ex,\ey) circle (0.15);
\pgfmathsetmacro{\ex}{9}
\pgfmathsetmacro{\ey}{3}
\fill[fill = red, opacity = 0.4] (\ex,\ey) ++(0:.5) arc (0:180:.5);
\draw[thick, fill=red]  (\ex,\ey) circle (0.15);
\pgfmathsetmacro{\ex}{15}
\pgfmathsetmacro{\ey}{3}
\fill[fill = red, opacity = 0.4] (\ex,\ey) ++(90:.5) arc (90:270:.5);
\draw[thick, fill=red]  (\ex,\ey) circle (0.15);
\pgfmathsetmacro{\ex}{12}
\pgfmathsetmacro{\ey}{1.75}
\fill[fill = red, opacity = 0.4] (\ex,\ey) ++(90:.5) arc (90:270:.5);
\draw[thick, fill=red]  (\ex,\ey) circle (0.15);
\pgfmathsetmacro{\ex}{3.75}
\pgfmathsetmacro{\ey}{1}
\fill[fill = red, opacity = 0.4] (\ex,\ey) ++(180:.5) arc (180:360:.5);
\draw[thick, fill=red]  (\ex,\ey) circle (0.15);
\pgfmathsetmacro{\ex}{8}
\pgfmathsetmacro{\ey}{5}
\fill[fill = red, opacity = 0.4] (\ex,\ey) ++(180:.5) arc (180:360:.5);
\draw[thick, fill=red]  (\ex,\ey) circle (0.15);
\pgfmathsetmacro{\ex}{3}
\pgfmathsetmacro{\ey}{5}
\fill[fill = red, opacity = 0.4] (\ex,\ey) ++(180:.5) arc (180:360:.5);
\draw[thick, fill=red]  (\ex,\ey) circle (0.15);
\pgfmathsetmacro{\ex}{12}
\pgfmathsetmacro{\ey}{3}
\fill[fill = red, opacity = 0.4] (\ex,\ey) ++(180:.5) arc (180:360:.5);
\draw[thick, fill=red]  (\ex,\ey) circle (0.15);
\end{tikzpicture}
\\
\scriptstyle{\textup{{ Fig.\,31: Orthogonal polygon with \(26\) sides and CHG set of cardinality \(11\)}}}
\end{array}
\end{align*}

\begin{Corollary}\label{MainCorOrth}
For all \(n \geq 6\), we have
\(
\textup{\(\cg^\perp(n,180^\circ)\)} =n/2-2.
\)
\end{Corollary}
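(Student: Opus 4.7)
The plan is to mirror exactly the argument used for Corollary~\ref{MainCor}, since Corollary~\ref{MainCorOrth} is now a direct consequence of the work already done. Specifically, I will sandwich $\cg^\perp(n,180^\circ)$ between the upper bound supplied by Theorem~\ref{MainThmOrth} and the lower bound supplied by Theorem~\ref{HPthm}(ii).

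First, for the upper bound, I will invoke Theorem~\ref{MainThmOrth}: given any orthogonal polygon $P$ with $n \geq 6$ sides, that theorem produces a CHG set ${\tt G}$ for $P$ with $|{\tt G}| = n/2 - 2$. Taking the maximum over all such $P$ in the definition of $\cg^\perp(n,180^\circ)$ immediately yields $\cg^\perp(n,180^\circ) \leq n/2 - 2$.

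For the lower bound, I will use the same ``half-guards dominate full-guards'' observation made in the paragraph following Theorem~\ref{HPthm}: if ${\tt G}$ is any CHG set for an orthogonal polygon $P$, then replacing each half-guard by a full-guard at the same location yields a CFG set for $P$ of the same cardinality, so $\cg^\perp(P,180^\circ) \geq \cg^\perp(P,360^\circ)$. Maximizing over $n$-sided orthogonal $P$ and invoking Theorem~\ref{HPthm}(ii) gives $\cg^\perp(n,180^\circ) \geq \cg^\perp(n,360^\circ) = n/2 - 2$.

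Combining the two bounds yields $\cg^\perp(n,180^\circ) = n/2 - 2$, as claimed. There is no real obstacle here: all of the geometric and combinatorial work lies in Theorem~\ref{MainThmOrth}, and all of the ``sometimes necessary'' content is inherited from Hern\'andez-Pe\~nalver's result in Theorem~\ref{HPthm}(ii). The proof is therefore a two-line bookkeeping argument, identical in structure to Corollary~\ref{MainCor}.
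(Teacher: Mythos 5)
Your proposal is correct and matches the paper's own proof exactly: the paper likewise obtains the upper bound $n/2-2$ from Theorem~\ref{MainThmOrth} and the lower bound from $\cg^\perp(n,180^\circ) \geq \cg^\perp(n,360^\circ) = n/2-2$ via Theorem~\ref{HPthm}(ii). No differences to note.
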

\begin{proof}
We have that \(\cg^\perp(n,180^\circ) \geq \cg^\perp(n,360^\circ) =n/2-2\) by Theorem~\ref{HPthm}, and we have by Theorem~\ref{MainThmOrth} that \(\cg^\perp(n,180^\circ) \leq n/2-2\), so the claim follows.
\end{proof}

\color{black}

\end{document}